\numberwithin{equation}{section}
\theoremstyle{plain}
\newtheorem{theorem}{Theorem}[section]
\newtheorem*{rtheo}{Renewal Theorem with dependent interarrival times}
\newtheorem{corollary}[theorem]{Corollary}
\newtheorem{lemma}[theorem]{Lemma}
\newtheorem{proposition}[theorem]{Proposition}
\theoremstyle{definition}
\newtheorem{definition}[theorem]{Definition}
\theoremstyle{remark}
\newtheorem{remark}[theorem]{Remark}
\newtheorem{example}[theorem]{Example}
\setlist[enumerate,1]{label=(\roman*),ref=(\roman*)}
\setlist[enumerate,2]{label=(\alph*),ref=(\Alph{enumi}\alph*)}
\renewcommand{\tilde}{\widetilde}
\renewcommand{\hat}{\widehat}
\renewcommand{\emptyset}{\varnothing}
\renewcommand{\rho}{\varrho}
\newcommand{\defeq}{\vcentcolon=}
\newcommand{\eqdef}{=\vcentcolon}
\newcommand{\om}{\omega}
\newcommand{\ee}{\mathrm{e}}
\newcommand{\e}{\varepsilon}
\newcommand{\mdim}{\delta}
\newcommand{\eigenf}{h}
\newcommand{\eigenv}{\gamma}
\newcommand{\omneu}{u}
\newcommand{\z}{\zeta}
\newcommand{\aaa}{a}
\newcommand{\PF}{\mathcal L}
\newcommand{\Laplace}{L}
\newcommand{\prob}{\Pi}
\newcommand{\im}{\mathbf{i}}
\newcommand{\inte}{\textup{int}}
\newcommand{\renfcn}{f}
\newcommand{\renfcnn}{g}
\newcommand{\codefun}{\xi}
\newcommand{\codefunc}{\chi}
\newcommand{\roz}{\rho(z)}
\renewcommand\thesubfigure{(\Alph{subfigure})}
\begin{document}

\title[Renewal thms.\ for dependent interarrival times]{Renewal theorems for a class of processes with dependent interarrival times and applications in geometry}

\author{Sabrina Kombrink}
\address{Sabrina Kombrink, Universit\"at zu L\"ubeck, Ratzeburger Allee 160, 23562 L\"ubeck, Germany}
\email{kombrink@math.uni-luebeck.de}
\thanks{Part of this work was supported by grant 03/113/08 of the \emph{Zentrale Forschungsf\"orderung, Universit\"at Bremen}.}

\begin{abstract}
 Renewal theorems are developed for point processes with interarrival times $W_n=\codefun(X_{n+1}X_n\cdots)$, where $(X_n)_{n\in\mathbb Z}$ is a stochastic process with finite state space $\Sigma$ and $\codefun\colon\Sigma_A\to\mathbb R$ is a H\"older continuous function on a subset $\Sigma_A\subset\Sigma^{\mathbb N}$. The theorems developed here unify and generalise the key renewal theorem for discrete measures and Lalley's renewal theorem for counting measures in symbolic dynamics. Moreover, they capture aspects of Markov renewal theory. 
 The new renewal theorems allow for direct applications to problems in fractal and hyperbolic geometry; for instance, results on the Minkowski measurability of self-conformal sets are deduced.
 Indeed, these geometric problems motivated the development of the renewal theorems.
\end{abstract}

\keywords{Renewal theorem, dependent interarrival times, symbolic dynamics, Key renewal theorem, Ruelle Perron-Frobenius theory}

\subjclass[2010]{Primary: 60K05, 60K15. Secondary: 28A80, 28A75}

\maketitle

\section{Introduction and statement of main results}\label{sec:intro}
\subsection{Renewal Theory}
Renewal theorems have found wide applicability in various areas of mathematics (such as fractal and hyperbolic geometry), economics (such as queueing, insurance and ruin problems) and biology (such as population dynamics). 
They are concerned with waiting times in-between occurrences of a repetitive pattern connected with repeated trials of a stochastic experiment. In classical renewal theory, it is assumed that after each occurrence of the pattern, the trials start from scratch. This means that the trials which follow an occurrence of the pattern form a replica of the whole stochastic experiment. In other words, the \emph{waiting times} in-between successive occurrences of the pattern, also called \emph{interarrival times}, are assumed to be mutually independent random variables with the same distribution (see \cite[Ch.~XIII]{FellerI} and \cite{Feller}).
The classical renewal theorems have been extended in various ways and to various different settings. One such extension, which is of particular interest to us, is given by Markov renewal theory. 
By a \emph{Markov random walk}, we understand a point process for which the interarrival times $W_0,W_1,\ldots$ are not necessarily independent identically distributed (i.\,i.\,d.), but Markov dependent on a discrete Markov chain $(X_n)_{n\in\mathbb N_0}$. This means that $W_n$ is sampled according to the current and proximate values $X_n,X_{n+1}$ but is independent of the past values $X_{n-1},\ldots,X_{0}$.
In the present article, we drop the assumption that $(X_n)_{n\in\mathbb N_0}$ is a Markov chain and that $W_n$ is Markov dependent on $(X_n)_{n\in\mathbb N_0}$. Instead, we consider a time-homogeneous (i.\,e.\ stationary increments) stochastic process $(X_n)_{n\in\mathbb Z}$ with finite state space and time-set $\mathbb Z$ and extend to the setting that $W_n$ may depend on the current values $X_{n+1},X_{n}$ as well as on the whole past $X_{n-1},X_{n-2},\ldots$ of the stochastic process $(X_n)_{n\in\mathbb Z}$. The dependence of $W_n$ on $X_{n+1},X_n,\ldots$ is assumed to be described by a H\"older continuous function. 
This additionally allows us to treat situations with more strongly dependent interarrival times.
Before giving an outline of the theorems we want to provide references to the afore-mentioned.  
Since the literature on classical and Markov renewal theory is vast, we abstain from presenting a complete list but instead refer to the following monographs and fundamental articles, where further references can be found: \cite{FellerI,Feller,MarkovRTSurvey,Alsmeyer,Asmussen,RenewalProcesses}.

We let $\Sigma\defeq\{1,\ldots,M\}$, $M\geq 2$ denote the state space of the time-homo\-geneous stochastic process $(X_n)_{n\in\mathbb Z}$. 
The admissible transitions are assumed to be governed by a \emph{primitive} $(M\times M)$- incidence matrix $A$ of zeros and ones, meaning there exists $n\in\mathbb N$ such that all entries of $A^n$ are positive.
The set of \emph{one-sided infinite admissible paths} through $\Sigma$ consistent with $A=(A(i,j))_{i,j\in\Sigma}$ is defined by $\Sigma_A\defeq\{x\in\Sigma^{\mathbb N}\mid A(x_k,x_{k+1})=1\ \forall\, k\in\mathbb N\}$.
Elements of $\Sigma_A$ are interpreted as paths which describe the history of the process, supposing that the process has been going on forever. 
Given $x\in\Sigma_A$, we study the limiting behaviour as $t\to\infty$ of the renewal function 
\begin{align*}
   N(t,x)
   \defeq \mathbb E_x\left[\sum_{n=0}^{\infty}\tilde{\renfcn}_{X_n\cdots X_1 x}\left(t-\sum_{k=0}^{n-1}W_k\right)\right],
\end{align*}
where $\mathbb E_x$ is the conditional expectation given $X_0X_{-1}\cdots=x$, the family $\{\tilde{\renfcn}_y\colon\mathbb R\to\mathbb R\mid y\in\Sigma_A\}$ satisfies some regularity conditions (see Sec.~\ref{sec:RTneu}) and for $n=0$ we interpret $\tilde{\renfcn}_{X_n\cdots X_1 x}(t-\sum_{k=0}^{n-1}W_k)$ to be $\tilde{\renfcn}_x(t)$.
For instance, if $\tilde{\renfcn}_y\defeq\mathds 1_{[0,\infty)}$, then $N(t,x)$ gives the expected number of renewals in the time-interval $(0,t]$ given $X_0X_{-1}\cdots=x$.
A natural assumption in applications is that the recent history of $(X_n)_{n\in\mathbb Z}$ has more influence on which state will be visited next than the earlier history. This is reflected in our assumption that the function $\eta\colon \Sigma_A\to\mathbb R$ given by $\eta(ix) \defeq \log\mathbb P_x(X_{1}=i)$ is $\alpha$-H\"older continuous for some $\alpha\in(0,1)$ in the sense of Defn.~\ref{defn:continuous}. Here, $\mathbb P_x$ is the distribution corresponding to $\mathbb E_x$ and $i\in\Sigma$. 
 Note that $\mathbb P_x(X_1=i)\defeq \mathbb P(X_1=i\mid X_0X_{-1}\cdots=x)>0$ if $ix\in\Sigma_A$ by definition of $\Sigma_A$. 
 Similarly, we assume that there exists $\codefun\in\mathcal F_{\alpha}(\Sigma_A,\mathbb R)$ with $W_n=\codefun(X_{n+1}X_nX_{n-1}\cdots)$. Here, $F_{\alpha}(\Sigma_A,\mathbb R)$ denotes the class of real-valued $\alpha$-H\"older continuous functions on $\Sigma_A$, see Def.~\ref{defn:continuous}.
This notation allows us to evaluate the conditional expectation and express $N(t,x)$ in a deterministic way. For this, write $S_n\codefun\defeq\sum_{k=0}^{n-1}\codefun\circ\sigma^k$ for the \emph{$n$-th Birkhoff sum} of $\codefun$ with $n\in\mathbb N$ and $S_0 \codefun\defeq 0$.
Here, $\sigma\colon\Sigma_A\to\Sigma_A$ denotes the (left) shift-map on $\Sigma_A$ which is defined by $\sigma(\om_1\om_2\ldots)\defeq\om_2\om_3\ldots$ for $\om_1\om_2\cdots\in\Sigma_A$. Notice, $\sigma(\om)$ describes the path of the process prior to the current time. Then $\sum_{k=0}^{n-1} W_k=S_n\codefun(X_{n}X_{n-1}\cdots)$ and, for $x,y\in\Sigma_A$ with $\sigma^ny=x$, we have $\mathbb P(X_nX_{n-1}\cdots=y\mid X_0X_{-1}\cdots=x)=\exp(S_n\eta(y))$.
Thus, 
\begin{equation}\label{eq:intro:renfcn}
   N(t,x)
   =\sum_{n=0}^{\infty}\sum_{y\in\Sigma_A:\sigma^n y=x}\tilde{\renfcn}_y(t-S_n\codefun(y))\ee^{S_n\eta(y)}.
\end{equation}
From this, one can deduce the renewal-type equation
\begin{align*}
   N(t,x)
   =\sum_{y\in\Sigma_A:\sigma y=x} N(t-\codefun(y),y)\ee^{\eta(y)}+\tilde{f}_x(t),
\end{align*}
which justifies calling $N$ a renewal function.
Intuitively, interarrival times are non-negative and probabilities take values in $[0,1]$. However, when considering the deterministic form \eqref{eq:intro:renfcn}, we allow $\codefun$ to take negative values, provided there exists $n\in\mathbb N$ for which $S_n\codefun$ is strictly positive.
Note that this condition is equivalent to $\xi$ being co-homologous to a strictly positive function, see Rem.~\ref{rmk:strictlypos}. Moreover, we allow $\eta$ to be chosen freely from the class $\mathcal F_{\alpha}(\Sigma_A,\mathbb R)$.

Notice, the current setting extends and unifies the setting of established renewal theorems: In the context of classical renewal theory for finitely supported measures (in particular of the key renewal theorem), $\eta$ and $\codefun$ only depend on the first coordinate.
When $\eta$ and $\codefun$ only depend on the first two coordinates, we are in the setting of Markov renewal theory. 
If $\eta$ is the constant zero-function and $\tilde{\renfcn}_y(t)=\codefunc(y)\mathds 1_{[0,\infty)}(t)$, where $\codefunc\in \mathcal F_{\alpha}(\Sigma_A,\mathbb R)$ is non-negative, we are precisely in the setting of \cite{Lalley}, where renewal theorems for counting measures in symbolic dynamics were developed.
For more details on these connections, see Sec.~\ref{sec:RTcorollaries}.

To present the renewal theorems, we now introduce important quantities on which the asymptotic behaviour of $N(t,x)$ as $t\to\infty$ depends. 
We interpret $(X_{n+1},W_n)_{n\in\mathbb N_0}$ as a stochastic process with state space $\Sigma\times\mathbb R$ and define an analogue of a transition kernel $U\colon\Sigma_A\times(\mathcal P(\Sigma)\otimes\mathfrak B(\mathbb R))\to\mathbb R$ by
\begin{align*}
 U(x,\{j\}\times(-\infty,t]) 
 &\defeq \mathbb P(X_{n+1}=j,W_n\leq t\mid X_nX_{n-1}\cdots=x)\\
 &=\mathds 1_{(-\infty,t]}(\codefun(jx))\ee^{\eta(jx)}
\end{align*}
for $x\in\Sigma_A$, $j\in \Sigma$ and $t\in\mathbb R$.
Here $\mathcal P(\Sigma)$ denotes the power set of $\Sigma$ and $\mathfrak B(\mathbb R)$ denotes the Borel $\sigma$-algebra on $\mathbb R$.
 For given $x\in\Sigma_A$ and $j\in \Sigma$, we set $F_{x,j}(t)\defeq U(x,\{j\}\times(-\infty,t])$ which defines a distribution function $F_{x,j}$ of a finite measure with total mass $\exp(\eta(jx))$. Its Laplace transform at $s\in\mathbb R$ is given by 
$(\mathcal L F_{x,j})(s) \defeq \int_{-\infty}^{\infty}\ee^{-sT}\textup{d}F_{x,j}(T) =\ee^{\eta(jx)}\ee^{-s\codefun(jx)}$.
For a given  $s\in\mathbb R$ it can be extended to an operator $\PF_{\eta-s\codefun}\colon\mathcal C(\Sigma_A)\to \mathcal C(\Sigma_A)$ acting on the space $\mathcal C(\Sigma_A)$ of continuous complex-valued functions on $\Sigma_A$ (see Sec.~\ref{sec:cts}) by
    \begin{equation}\label{eq:def:PF} 
    \PF_{\eta-s\codefun} \codefunc(x)\defeq \sum_{y\in\Sigma_A:\sigma y=x} \codefunc(y) \ee^{\eta(y)-s\codefun(y)}. 
    \end{equation}
Then $\PF_{\eta-s\codefun}\mathds{1}_{[j]}(x)=(\PF F_{x,j})(s)$, where $[j]\defeq\{x=x_1x_0\cdots\in\Sigma_A\mid x_1=j\}$ denotes the \emph{cylinder set} of $j\in\Sigma$.
The operator $\PF_{\eta-s\codefun}$ is called the \emph{Ruelle-Perron-Frobenius} operator to the potential function $\eta-s\codefun$. 
The Ruelle-Perron-Frobenius operator is a linear operator which takes the role of the stochastic matrix in the classical Perron-Frobenius theory.
(If $\eta$ and $\codefun$ only depend on the first two letters, then $\PF_{\eta-s\codefun}$ has an interpretation of a matrix $(B_{ij}(s))_{i,j\in\Sigma}$ with entries $B_{ij}(s)\defeq \exp[\eta(ji)-s\codefun(ji)]$ if $i$ to $j$ is an allowed transition, and $B_{ij}(s)\defeq 0$ otherwise. Studying this matrix leads to Markov renewal theorems, see Sec.~\ref{sec:corMarkov}.)
The $n$-th iterates of the Ruelle-Perron-Frobenius operator given by
$\PF^n_{\eta-s\codefun}\codefunc(x) = \sum_{y\in\Sigma_A,\,\sigma^ny=x}\codefunc(y)\ee^{S_n(\eta-s\codefun)(y)}$
are closely related to the  renewal function $N$.
This becomes apparent by multiplying $N(t,x)$ with $\exp(-st)$ and writing $\tilde{\renfcnn}_y(t)\defeq\exp(-st)\tilde{\renfcn}_y(t)$, yielding
\begin{align}\label{eq:intro:N}
	\ee^{-st}N(t,x)
	=\sum_{n=0}^{\infty}\sum_{y\in\Sigma_A:\sigma^n y=x}\tilde{\renfcnn}_y(t-S_n\codefun(y))\ee^{S_n(\eta-s\codefun)(y)}.
\end{align}
We let $\mdim$ denote the unique value of $s$, for which $\PF_{\eta-s\codefun}$ has spectral radius one, see Prop.~\ref{thm:eigenvalueone}. 
This is exactly the value of $s$ at which the series in \eqref{eq:intro:N} jumps from being convergent to being divergent (if $\tilde{\renfcnn}_y$ are sufficiently tame, in particular bounded, which is ensured by regularity conditions on $\tilde{\renfcn}_y$, see Sec.~\ref{sec:RTneu}).
The unique eigenfunction of $\PF_{\eta-\mdim\codefun}$ to the eigenvalue one is denoted by $\eigenf_{\eta-\mdim\codefun}$. The associated equilibrium measure, i.\,e. the fixed-point measure of the dual operator $\PF_{\eta-\mdim\codefun}^*$ acting on the set of Borel probability measures supported on $\Sigma_A$, is denoted by $\nu_{\eta-\mdim\codefun}$. We define $\mu_{\eta-\mdim\codefun}$ through $\textup{d}\mu_{\eta-\mdim\codefun}/\textup{d}\nu_{\eta-\mdim\codefun}=\eigenf_{\eta-\mdim\codefun}$. This is the unique $\sigma$-invariant Gibbs measure for the potential function $\eta-\mdim\codefun$. 
For more details on these terms see Sec.~\ref{sec:preliminaries}, \cite[Thm.~2{.}16, Cor.~2{.}17]{Walters_convergence} and \cite[Theorem 1{.}7]{Bowen_equilibrium}.

It is well-known that the limiting behaviour of a renewal function depends on the values which the interarrival times assume. Loosely speaking if the possible values of the interarrival times form a spaced pattern we are in the lattice situation and otherwise in the non-lattice situation, see Defn.~\ref{defn:lattice}. 
This lattice -- non-lattice dichotomy also occurs in our main theorems. Briefly, their conclusion is that with $s=\mdim$ the term on the right hand side of \eqref{eq:intro:N} converges to a positive and finite constant depending on $x$ as $t\to\infty$, when $\codefun$ is non-lattice. When $\codefun$ is lattice, we only have convergence along subsequences, so that $N(t,x)$ is asymptotic to a periodic function. 
Here, we call two functions $f,g\colon\mathbb R\to \mathbb R$ \emph{asymptotic} as $t\to\infty$, written $f(t)\sim g(t)$ as $t\to\infty$, if for all $\e>0$ there exists $t^*\in\mathbb R$ such that for $t\geq t^*$ the value $f(t)$ lies between $(1-\e)g(t)$ and $(1+\e)g(t)$.
If the range of $g$ lies in $\mathbb R_{>0}$ then $f(t)\sim g(t)$ as $t\to\infty$ if and only if $\lim_{t\to\infty}f(t)/g(t)=1$.
Our renewal theoretic results are proved via an analytic approach using Ruelle-Perron-Frobenius theory inspired by \cite{Lalley}. They are summarised in the following theorem. 
We refer the reader to Sec.~\ref{sec:RTneu}, in particular, Thms.~\ref{thm:RT1} and \ref{thm:RT2}, for further details and the precise meaning of the required regularity conditions.
\begin{rtheo}
     Assume that the family 
     $\{\tilde{\renfcn}_x(t)\mid x\in\Sigma_A\}$ satisfies some regularity conditions.
   \begin{enumerate}
   \item If $\codefun$ is non-lattice, then 
   \begin{equation*}
    N(t,x)\sim\ee^{t\mdim}\underbrace{
\frac{\eigenf_{\eta-\mdim \codefun}(x)}{\int \codefun\textup{d}\mu_{\eta-\mdim \codefun}}\int_{\Sigma_A} \int_{-\infty}^{\infty}\ee^{-T\mdim}\tilde{\renfcn}_y(T)\textup{d}T\textup{d}\nu_{\eta-\mdim \codefun}(y)
}_{\eqdef G(x)}
   \end{equation*}
   as $t\to\infty$, uniformly for $x\in\Sigma_A$.
   \item Assume that $\codefun$ is lattice. Then there exists a periodic function $\tilde{G}_x$ 
such that
  \begin{align*}
    N(t,x)
    \sim \ee^{t\mdim}\tilde{G}_x(t).
  \end{align*}
  \item\label{it:averagethm} We always have 
  \begin{equation*}
  \lim_{t\to\infty}t^{-1}\int_0^{t}\ee^{-T\mdim}N(T,x)\textup{d}T=G(x).
  \end{equation*}
  \end{enumerate}  
\end{rtheo}

\subsection{An Application -- Minkowski content of fractal sets}\label{sec:applications}
A direct application of the renewal theorems with dependent interarrival times in geometry, namely existence of the Minkowski content of self-conformal sets, is the focus of the current section.  It is this application which led to developing the renewal theorems. 
Here, we focus on the general ideas as to how to apply the respective renewal theorem and refer to a forthcoming article by the author for the geometric details (especially concerning the approximation arguments) and extensions to broader classes of fractal sets.

Let $B\subset\mathbb R^d$ denote a bounded subset of the $d$-dimensional Euclidean space $(\mathbb R^d,\|\cdot\|_2)$.
We wish to understand the asymptotic behaviour of the \emph{($\ee^{-t}$)-parallel volume} $\lambda_d(B_{\ee^{-t}})$ of $B$ as $t\to\infty$. Here, $B_{\ee^{-t}}\defeq\{x\in\mathbb R^d\mid\inf_{b\in B}\|b-x\|_2\leq \ee^{-t}\}$ for $t\in\mathbb R$ and $\lambda_d$ denotes the $d$-dimensional Lebesgue measure. 
More precisely, we study existence of the \emph{Minkowski content}
\[
  \mathcal{M}(B)\defeq\lim_{t\to\infty}\ee^{t(d-D)}\lambda_d(B_{\ee^{-t}})
\]
of $B$ and determine its value when the limit exists.
The definition of the Minkowski content implicitly assumes existence of the \emph{Minkowski dimension} $D\defeq d+\lim_{t\to\infty}t^{-1}\log\lambda_d(B_{\ee^{-t}})$, which coincides with the box-counting dimension, see \cite[Prop.~3{.}2]{Falconer_Foundation}. If $D\in\mathbb N$ and $\lambda_D(B)>0$ then $\mathcal M(B)=\lambda_D(B)$. Otherwise $\lambda_d(B)=0$ and the Minkowski content, when it exists, can be interpreted as the $D$-dimension\-al volume of $B$, giving a substitute of the notion of volume for non-integer dimensions. Besides this geometric relevance, the Minkowski content plays a major role in the Weyl-Berry conjecture, which is concerned with spectral asymptotics of Laplace operators on domains with irregular boundaries, see \cite{Berryresonators,Berrywave,LapPom}. These are two of the reasons why the Minkowski content has attracted much attention in recent years (see e.\,g.\ \cite{survey} for a more in depth introduction).

\begin{figure}[t]
   \begin{subfigure}[c]{0.3\textwidth}
   \includegraphics[width=\textwidth]{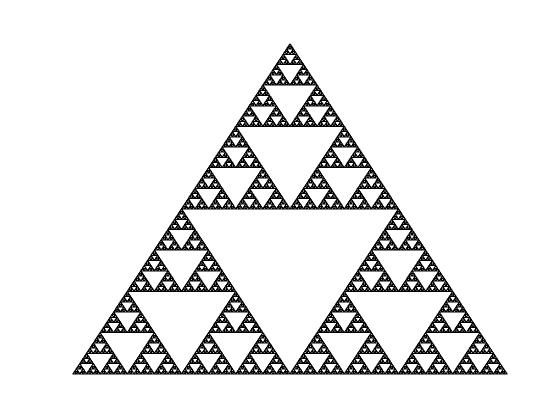}
   \subcaption{}
   \label{fig:Sierp_gasket}
   \end{subfigure}
   \quad
   \begin{subfigure}{0.3\textwidth}
   \includegraphics[width=\textwidth]{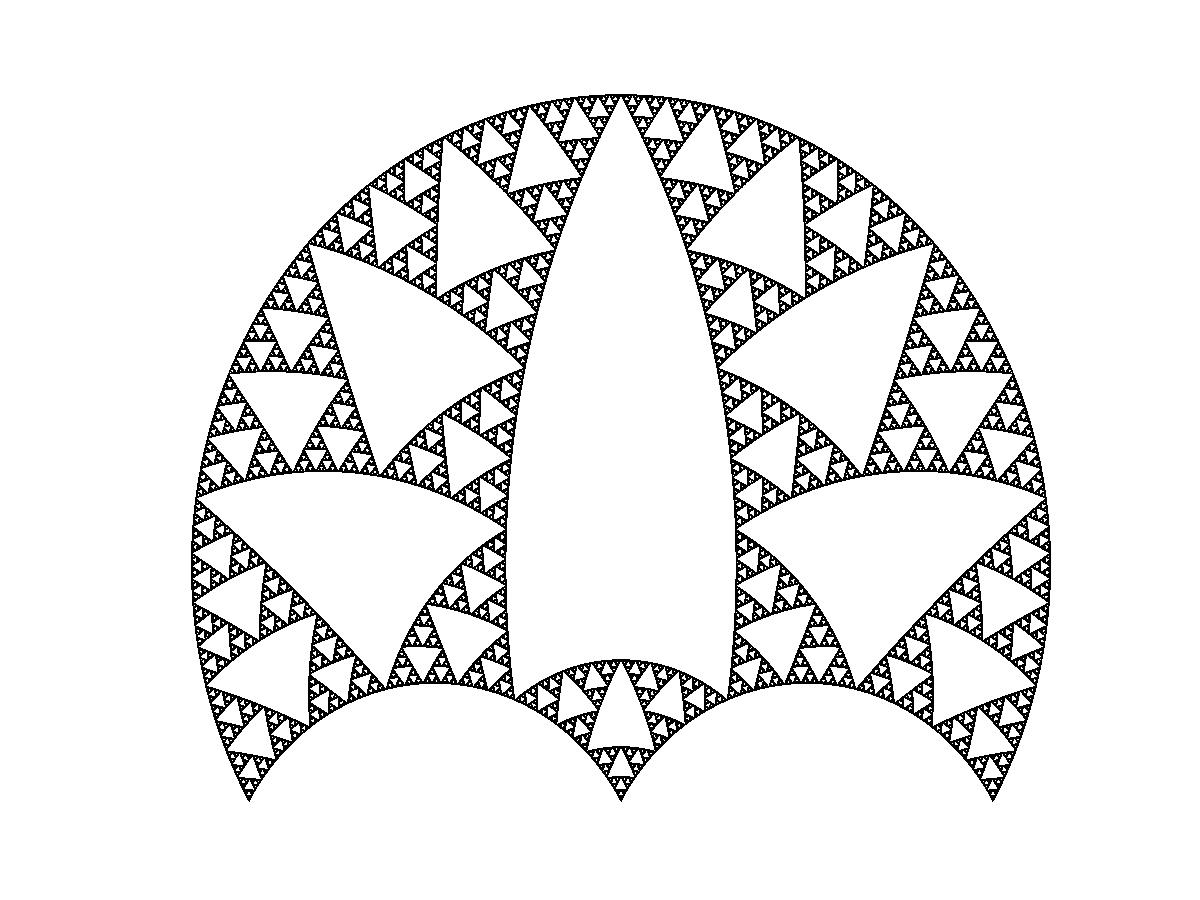}
   \subcaption{}
   \label{fig:bat}
   \end{subfigure}
   \quad
   \begin{subfigure}{0.3\textwidth}
   \includegraphics[width=\textwidth]{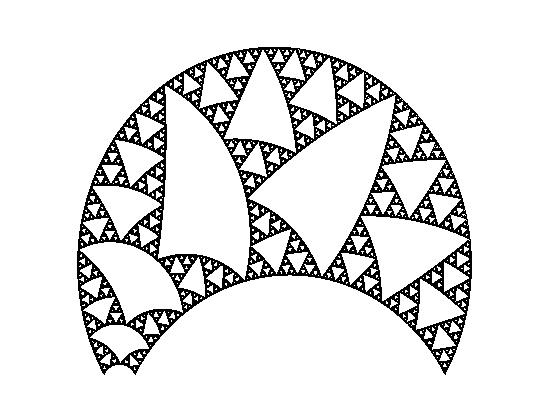}
   \subcaption{}
   \label{fig:moeb1}
   \end{subfigure}
   \caption{The self-conformal sets from Ex.~\ref{ex:pictures}: 
   \subref{fig:Sierp_gasket} The Sierpinski gasket (self-similar).
   \subref{fig:bat} and \subref{fig:moeb1} Conformal images of the Sierpinski gasket (self-conformal but not self-similar).}
   \label{fig:fractals}
\end{figure}

For defining self-conformal sets let $X\subset\mathbb R^d$ be a non-empty compact set with $X=\overline{\inte X}$. Define $\Phi\defeq\{\phi_1,\ldots,\phi_M\colon X\to X\}$ to be an iterated function system (IFS) consisting of contractive conformal $\mathcal C^{1+\alpha}$-diffeo\-morphisms $\phi_i$ with $M\geq 2$, $\alpha\in(0,1)$. 
Let $F$ be the unique compact non-empty set satisfying $F=\bigcup_{i=1}^M \phi_i F$. It exists due to \cite{Hutchinson} and is called the \emph{self-conformal set} associated with $\Phi$. 
$F$ is called \emph{self-similar} in the special case of $\phi_i$ being \emph{similitudes}, i.\,e.\ if $\|\phi_i(x)-\phi_i(y)\|_2=c_i\|x-y\|_2$ for all $x,y\in X$ and $i\in\{1,\ldots,M\}\eqdef\Sigma$ with $c_i\in(0,1)$.
Self-conformal sets provide prominent examples of fractal sets, see Ex.~\ref{ex:pictures} and Fig.~\ref{fig:fractals}.

\renewcommand\thesubfigure{}
\begin{figure}[t]
	\begin{subfigure}{0.25\textwidth}
		\includegraphics[width=\textwidth]{Sierpgasket}
		\includegraphics[width=\textwidth]{bat}
		\includegraphics[width=\textwidth]{t01}
		\caption{$F$}
	\end{subfigure}
	\hspace{-0.25cm}
	\begin{subfigure}{0.25\textwidth}
		\includegraphics[width=\textwidth]{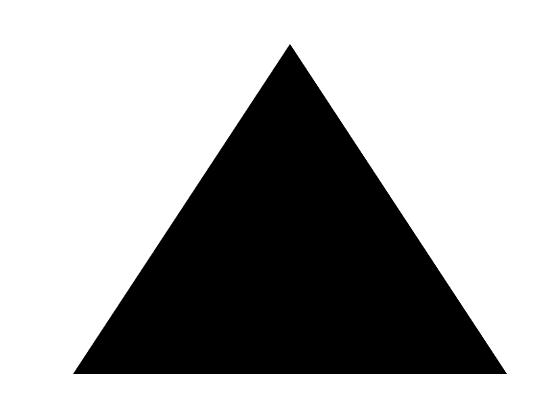}
		\includegraphics[width=\textwidth]{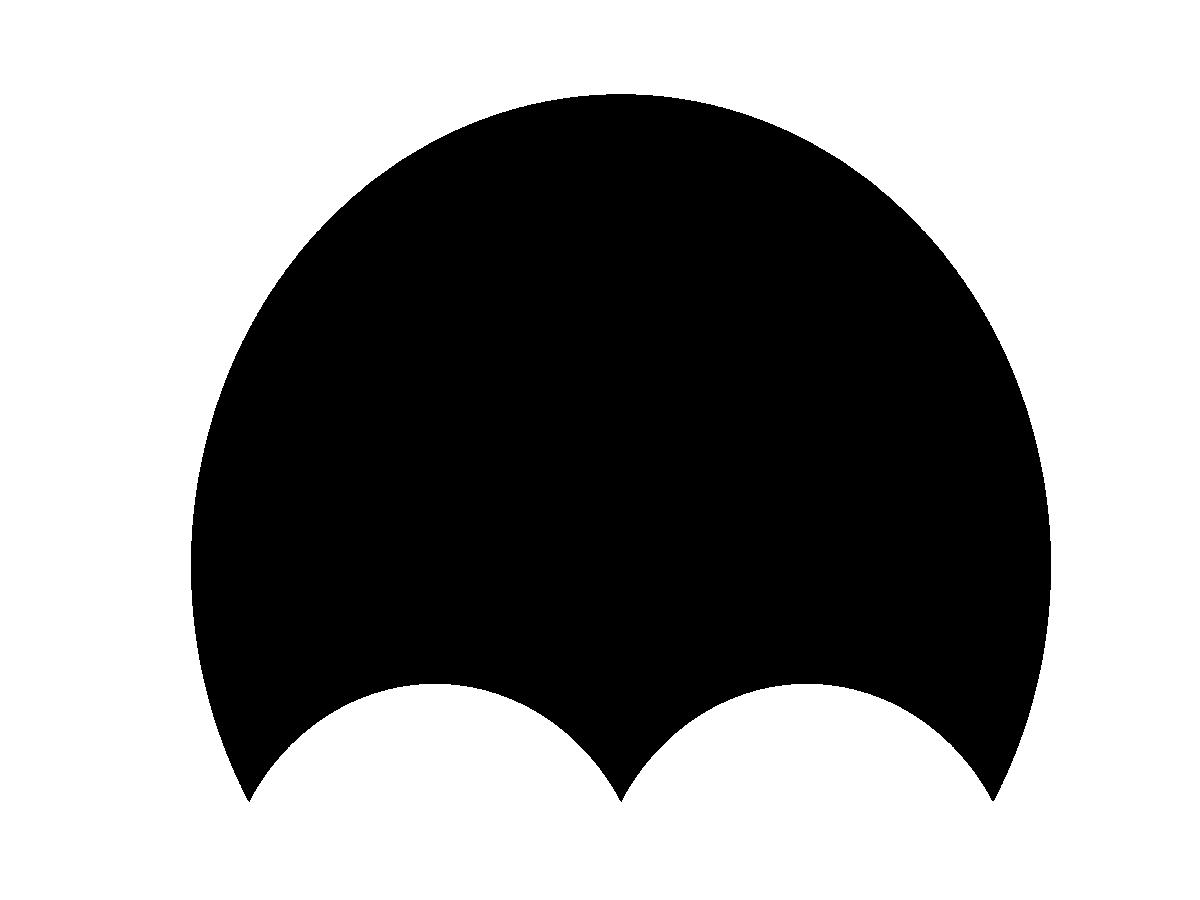}
		\includegraphics[width=\textwidth]{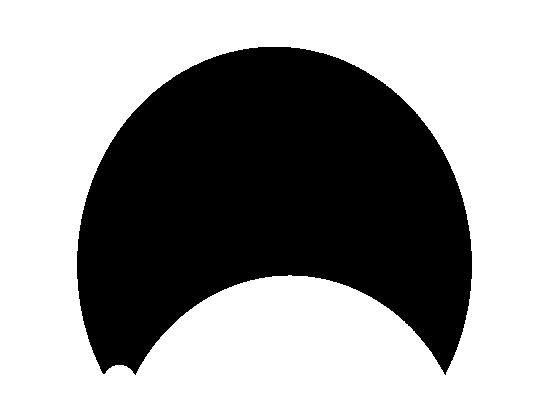}
		\caption{$O$}
	\end{subfigure}
	\hspace{-0.25cm}
	\begin{subfigure}{0.25\textwidth}
		\includegraphics[width=\textwidth]{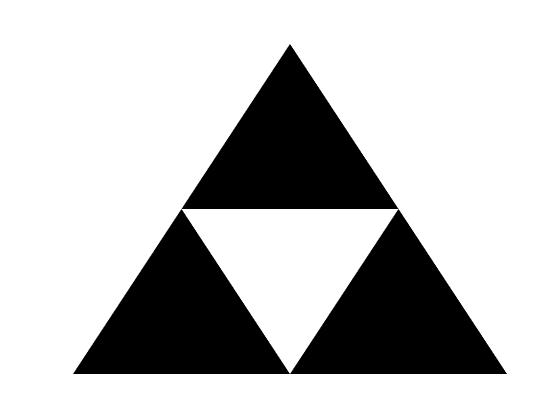}
		\includegraphics[width=\textwidth]{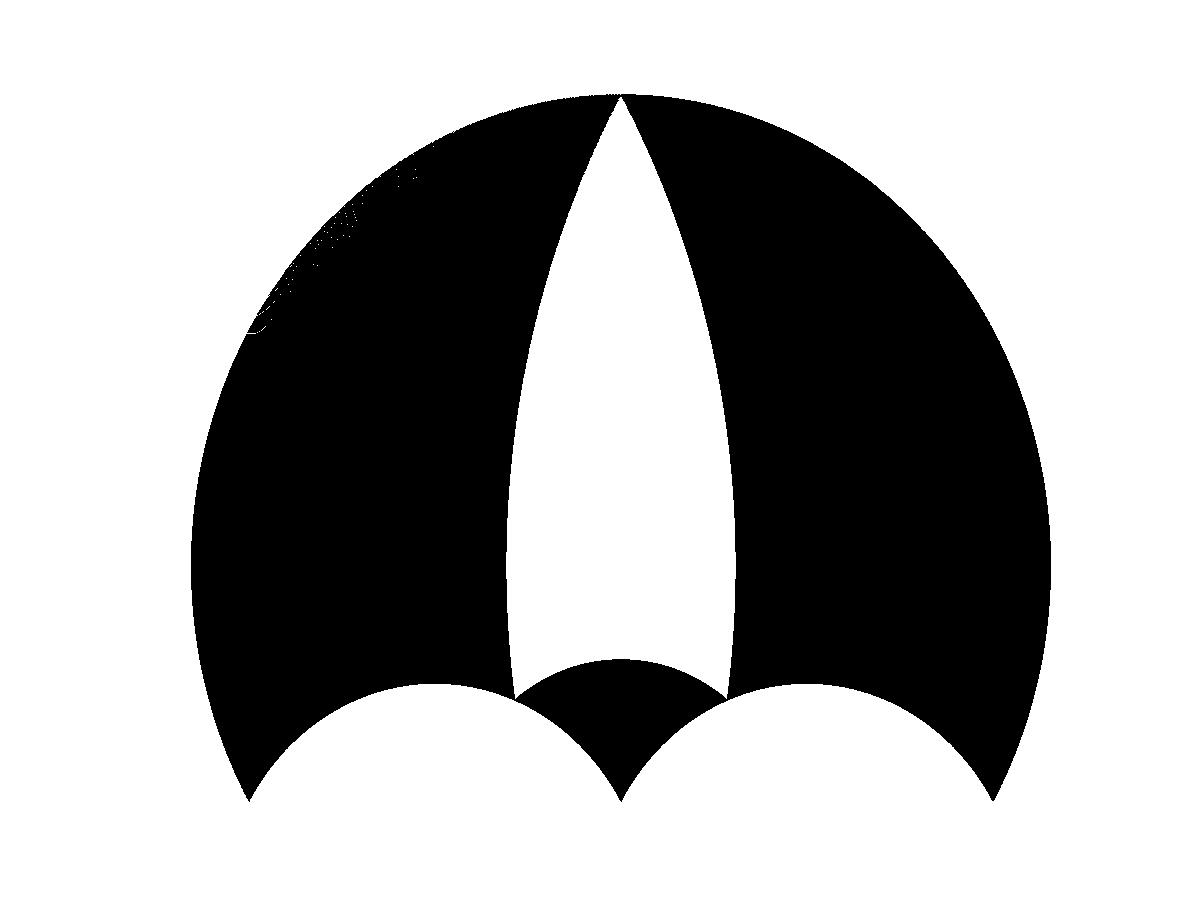}
		\includegraphics[width=\textwidth]{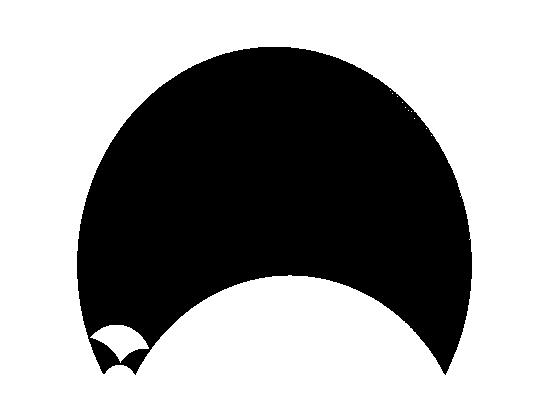}
		\caption{$\Phi O$}
	\end{subfigure}
	\hspace{-0.25cm}
	\begin{subfigure}{0.25\textwidth} 
		\includegraphics[width=\textwidth]{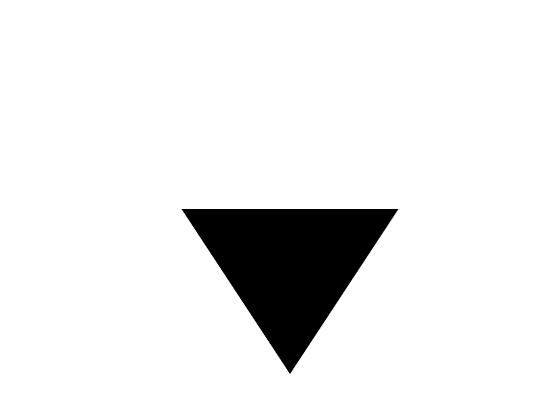}
		\includegraphics[width=\textwidth]{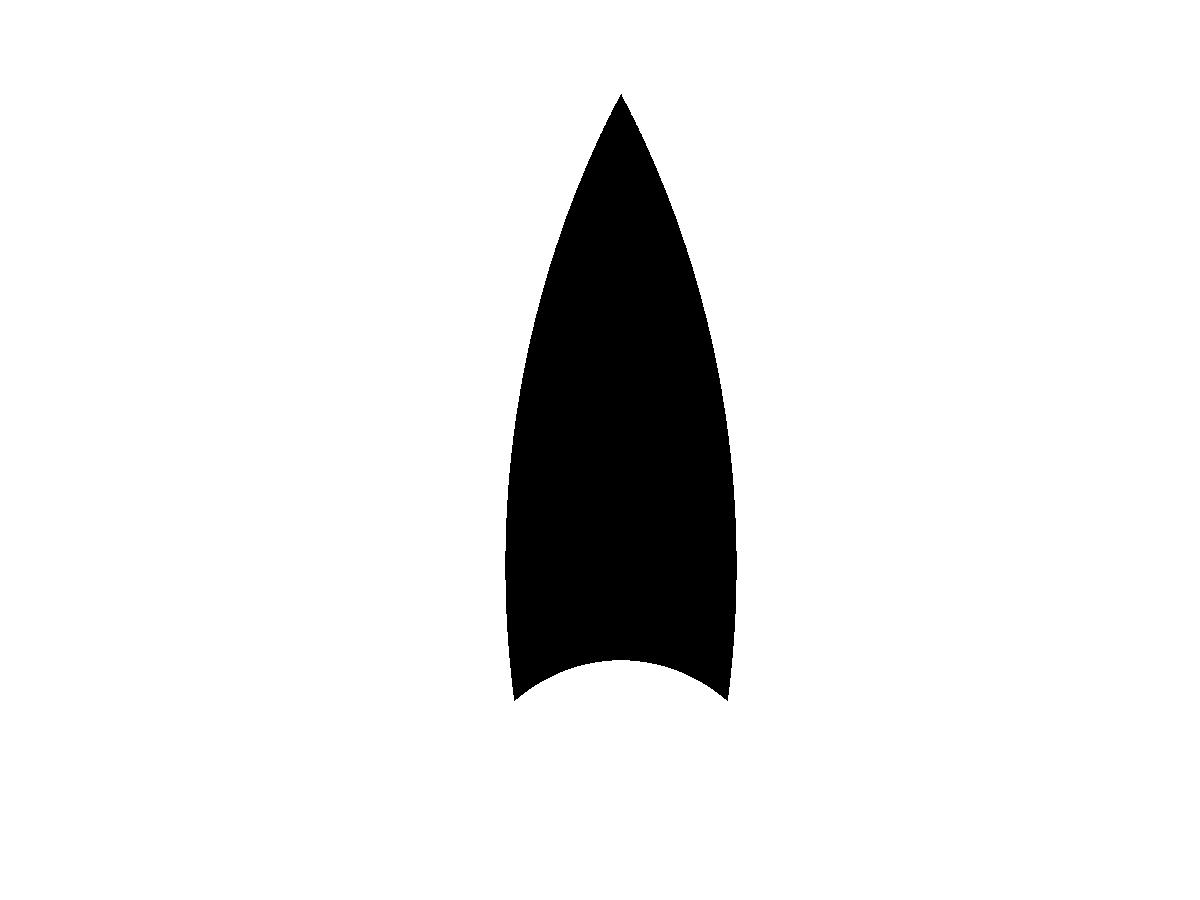}
		\includegraphics[width=\textwidth]{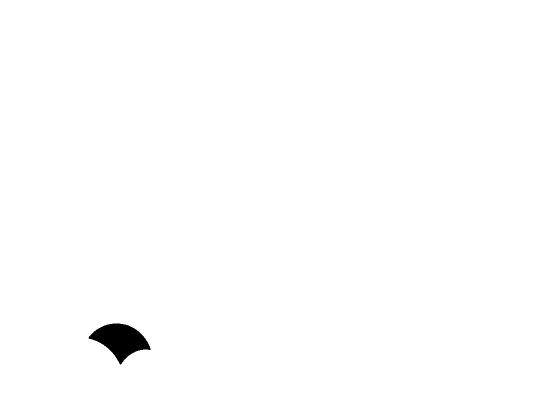}
		\caption{$\Gamma$}
	\end{subfigure}
	\caption{The self-conformal sets $F$ from Ex.~\ref{ex:pictures} (see Fig.~\ref{fig:fractals}), together with examples of associated feasible open sets $O$, $\Phi O$ and $\Gamma$.}
	\label{fig:opengamma}
\end{figure}
Next, we explain how to obtain the asymptotic behaviour of $\lambda_d(F_{\ee^{-t}})$ as $t\to\infty$. 
Assume that $\Phi$ satisfies the \emph{open set condition} (OSC), i\,e.\ there exists a \emph{feasible open set} $O\subset X$, which is non-empty and which satisfies $\phi_i O\subset O$ and $\phi_i O\cap \phi_j O=\emptyset$ for $i\neq j\in\Sigma$. 
Assume w.\,l.\,o.\,g.\ that $O$ is bounded.
For $\om=\om_1\cdots\om_n\in\Sigma^n$ write $\phi_{\om}\defeq\phi_{\om_1}\circ\cdots\circ\phi_{\om_n}$. 
Set $\Gamma\defeq O\setminus\bigcup_{i=1}^M \phi_i O$, $\Phi\Gamma\defeq\bigcup_{i=1}^M\phi_i\Gamma$ and note that
\begin{align}\label{eq:OUplus}
  O=\bigcup_{n=0}^{\infty}\bigcup_{u\in\Sigma^n}\phi_u\Gamma\cup\bigcap_{n=0}^{\infty}\Phi^n O,
\end{align}
where the unions are disjoint. 
For the sets from Fig.~\ref{fig:fractals} examples of sets $O$, with associated sets $\Phi O$ and $\Gamma$ are depicted in Fig.~\ref{fig:opengamma}.
We have $\Phi\left(\overline{\bigcap_{n=0}^{\infty}\Phi^n O}\right)=\overline{\bigcap_{n=0}^{\infty}\Phi^n O}$. Thus, $\overline{\bigcap_{n=0}^{\infty}\Phi^n O}$ is either empty or coincides with $F$ by uniqueness of the self-conformal set. Therefore, $\lambda_d\left(\bigcap_{n=0}^{\infty}\Phi^n O\right)\leq \lambda_d(F)$.
Let $D$ denote the Minkowski dimension of $F$. If $D<d$ then $ \lambda_d(F)=0$ and whence $\lambda_d\left(\bigcap_{n=0}^{\infty}\Phi^n O\right)=0$.
In the following we assume that $O$ can be chosen so that $\lambda_d(F_{\ee^{-t}}\cap\Gamma)\in\mathfrak{o}(\ee^{t(D-d)})$ with the little Landau symbol $\mathfrak o$. This is a mild condition, which is always satisfied for self-similar systems with any feasible open set $O$ (see \cite{Steffen_gamma}).   
Then \eqref{eq:OUplus} implies for $D<d$ that
\begin{align*}
  \lambda_d(F_{\ee^{-t}}\cap O)
  &= \sum_{n=0}^{\infty}\sum_{u\in\Sigma^n}\lambda_d(F_{\ee^{-t}}\cap\phi_u\Gamma)\\
  &=\sum_{\om\in\Sigma^m}\sum_{n=0}^{\infty}\sum_{u\in\Sigma^n}\lambda_d(F_{\ee^{-t}}\cap\phi_u\phi_{\om}\Gamma)+\mathfrak{o}(\ee^{t(D-d)})
\end{align*}
for any $m\in\mathbb N$. Further, suppose $\lambda_d(F_{\ee^{-t}}\cap\phi_u\phi_{\om}\Gamma)=\lambda_d((\phi_uF)_{\ee^{-t}}\cap\phi_u\phi_{\om}\Gamma)$. 
By \cite{Steffen_gamma}, for self-similar systems this assumption always holds true for some feasible open set $O$. (The open sets $O$ of Fig.~\ref{fig:opengamma} satisfy the above conditions, see Ex.~\ref{ex:pictures}.) If $m$ is large then $\phi_{\om}\Gamma$ is small, as each $\phi_i$ is strictly contracting. Since conformal maps locally behave like similarities and $\lambda_d$ is homogeneous of degree $d$, we can approximate $\lambda_d((\phi_uF)_{\ee^{-t}}\cap\phi_u\phi_{\om}\Gamma)$ by 
\begin{equation}\label{eq:approxMink}
\lvert\phi_u'(\pi\sigma\om x)\rvert^d\lambda_d(F_{\ee^{-t}/\lvert\phi_u'(\pi\sigma\om x)\rvert}\cap\phi_{\om}\Gamma)
\end{equation}
 with an arbitrary $x\in\Sigma^{\mathbb N}$.
 Here, $\sigma$ is the shift-map as before and $\pi\colon\Sigma^{\mathbb N}\to F$ is the \emph{code map} defined by $\{\pi(\om)\}\defeq\bigcap_{n=0}^{\infty}\phi_{\om\vert_n}(X)$, where $\om\vert_n\defeq\om_1\cdots\om_n$ is defined to be the \emph{subpath} of length $n$ of $\om$.
 In order to bring \eqref{eq:approxMink} into a form to apply the renewal theorem we define $\codefun\colon\Sigma^{\mathbb N}\to\mathbb R$ by
\[
\codefun(\om)\defeq-\log\lvert\phi'_{\om_1}(\pi\sigma\om)\rvert.
\]
The function $\codefun$ is called the \emph{geometric potential function} associated with the IFS $\Phi$. It carries important geometric information of $\Phi$ and $F$.
By definition $\exp(-S_n\codefun(u\om x))=\lvert\phi_u'(\pi\sigma\om x)\rvert$. Thus, $\lambda_d(F_{\ee^{-t}}\cap O)$ can be approximated by 
\[
\sum_{\om\in\Sigma^m}\sum_{n=0}^{\infty}\sum_{u\in\Sigma^n}\ee^{-dS_n\codefun(u\om x)}
\lambda_d(F_{\ee^{-t+S_n\codefun(u\om x)}}\cap\phi_{\om}\Gamma)+\mathfrak{o}(\ee^{t(D-d)}).
\]
Setting $f_y(t)\defeq\lambda_d(F_{\ee^{-t}}\cap\phi_{\om}\Gamma)$ for $y\in\Sigma^{\mathbb N}$, $\codefunc\defeq\mathds 1_{\Sigma^{\mathbb N}}$, $\eta\defeq-d\codefun$ and assuming the regularity conditions of Sec.~\ref{sec:RTneu}, we can apply the renewal theorem with dependent interarrival times (Thm.~\ref{thm:RT1}) and, if $\codefun$ is non-lattice, obtain
\begin{align*}
  &\sum_{n=0}^{\infty}\sum_{u\in\Sigma^n}\ee^{-dS_n\codefun(u\om x)}
\lambda_d(F_{\ee^{-t+S_n\codefun(u\om x)}}\cap\phi_{\om}\Gamma)
  =N(t,\om x)\\
  &\quad\sim \ee^{t\mdim}\frac{\eigenf_{-(d+\mdim)\codefun}(\om x)}{\int\codefun\textup{d}\mu_{-(d+\mdim)\codefun}}\int_{-\infty}^{\infty}\ee^{-T\mdim}\lambda_d(F_{\ee^{-T}}\cap\phi_{\om}\Gamma)\textup{d}T,
\end{align*}
where $\mdim>0$ is the unique value for which $P(-(d+\mdim)\codefun)=0$. Here, $P$ denotes the topological pressure function, see \eqref{eq:preddure} and Prop.~\ref{prop:pressureconvex}. It is proven in \cite{Bedford} that the Minkowski dimension $D$ of $F$ is the unique solution to $P(-D\codefun)=0$ thus, $d+\mdim=D$. Using approximation arguments (based on bounded distortion properties of $\Phi$) we all in all obtain, in the non-lattice situation, that
\begin{align*}
  \lambda_d(F_{\ee^{-t}}\cap O)
  \sim\ee^{t(D-d)}\hspace{-0.1cm}\lim_{m\to\infty}\hspace{-0.1cm}\sum_{\om\in\Sigma^m}\frac{\eigenf_{-D\codefun}(\om x)}{\int\codefun\textup{d}\mu_{-D\codefun}}\int_{-\infty}^{\infty}\hspace{-0.3cm}\ee^{-T(D-d)}\lambda_d(F_{\ee^{-T}}\cap\phi_{\om}\Gamma)\textup{d}T.
\end{align*}
This shows that $\codefun$ being non-lattice implies existence of the Minkowski content which can be determined explicitly.
The lattice and the average cases can be treated similarly.

In the special case that $F$ is self-similar, the approximation arguments are not needed and we can assume $m=0$. Further, $\eigenf_{-D\codefun}=\mathds 1_{\Sigma_A}$ and $\codefun$ is constant on cylinder sets of length one, taking the values $-\log(r_i)$ with $r_i\defeq\|\phi'_i\|_{\infty}$ and $\mu_{-D\codefun}([i])=r_i^D$, where $\|\cdot\|_{\infty}$ denotes the supremum-norm. Therefore, for self-similar sets $F$ we obtain
\begin{align*}
  \lambda_d(F_{\ee^{-t}}\cap O)
  \sim\ee^{t(D-d)}\frac{1}{-\sum_{i=1}^M\log(r_i)r_i^D}\int_{-\infty}^{\infty}\ee^{-T(D-d)}\lambda_d(F_{\ee^{-T}}\cap\Gamma)\textup{d}T
\end{align*}
if $\codefun$ is non-lattice.
If $\codefun$ is lattice then the respective formula is more involved but we can deduce from part \ref{it:averagethm} of the renewal theorem for dependent interarrival times (Thm.~\ref{thm:RT1}) that
\begin{align}\label{eq:latticefractal}
\lim_{t\to\infty}t^{-1}\int_0^t \ee^{-T(D-d)}\lambda_d(F_{\ee^{-t}}\cap O)\textup{d}T
= \frac{\int_{-\infty}^{\infty}\ee^{-T(D-d)}\lambda_d(F_{\ee^{-T}}\cap\Gamma)\textup{d}T}{-\sum_{i=1}^M\log(r_i)r_i^D}.
\end{align}

\begin{example}\label{ex:pictures}
  The following self-conformal sets are depicted in Figs.~\ref{fig:fractals} and \ref{fig:opengamma}. We will discuss the Minkowski content for the first set \subref{fig:Sierp_gasket} and restrict ourselves to the definitions and verification of the main conditions for \subref{fig:bat} and \subref{fig:moeb1}. In all the examples, $d=2$.
  \begin{enumerate}[label=(\alph*)]
    \item \label{it:Sierpinski} The Sierpinski gasket $F$ is the self-similar set associated with the IFS $\Phi\defeq\{\phi_1,\phi_2,\phi_3\colon\mathbb R^2\to\mathbb R^2\}$ given by $\phi_1(x)=x/2$, $\phi_2(x)=x/2+(1/2,0)$ and $\phi_3(x)=x/2+(1/4,\sqrt{3}/4)$. Its Minkowski dimension is $D=\log(3)/\log(2)$. The open triangle $O$ with vertices $(0,0)$, $(1,0)$, $(1/2,\sqrt{3}/2)$ is a feasible open set for $\Phi$. The set $\Gamma$ is the equilateral triangle with vertices $(1/2,0)$, $(1/4,\sqrt{3}/4)$, $(3/4,\sqrt{3}/4)$, and 
    \[
      \lambda_2(F_{\ee^{-t}}\cap\Gamma)
      =\begin{cases}
	\tfrac{3}{2}\ee^{-t}-3\sqrt{3}\ee^{-2t}& :t>-\log(\sqrt{3}/12)\\
	\tfrac{\sqrt{3}}{16}&  :t\leq-\log(\sqrt{3}/12).
      \end{cases}
    \]
    Thus, $\lambda_2(F_{\ee^{-t}}\cap\Gamma)\in\mathfrak{o}(\ee^{t(D-2)})$.
    We have $r_i\defeq\|\phi'_i\|_{\infty}=1/2$ for $i\in\{1,2,3\}$. Hence, the system is lattice and \eqref{eq:latticefractal} yields that 
    \begin{align*}
      &\lim_{t\to\infty} t^{-1}\int_0^t \ee^{-T(D-2)}\lambda_2(F_{\ee^{-T}}\cap O)\textup{d}T\\
      &\quad= \frac{\sqrt{3}^{-(D+1)}}{\log(2)}\left[\frac{1}{2-D}+\frac{2}{D-1}-\frac{1}{D}\right]
      \approx 1{.}8126.
    \end{align*}
    \item The set $F$ depicted in Fig.~\ref{fig:fractals}\subref{fig:bat} is the image of the Sierpinski gasket under the complex M\"obius transform $g_1\colon\mathbb{C}\to\mathbb C$, $z\mapsto-\im z/(\sqrt{3}z-\sqrt{3}-\im)$. An associated IFS is $\{g_1\circ\phi_i\circ g_1^{-1}\mid i\in\{1,2,3\}\}$ with $\phi_1,\phi_2,\phi_3$ as in \ref{it:Sierpinski}.  The maps $g_1\circ\phi_i\circ g_1^{-1}$ are contractive conformal $\mathcal{C}^{1+\alpha}$-diffeomorphisms, but not similitudes. Indeed, $F$ is self-conformal but not self-similar. It is known that the Minkowski dimension is stable under bi-Lipschitz maps, see \cite[Ch.~3.2]{Falconer_Foundation}. Thus, $D=\log(3)/\log(2)$. Since $\partial \Gamma\subseteq F$ is one-dimensional we have that $\lambda_2(F_{\ee^{-t}}\cap\Gamma)\in\mathfrak{O}(\ee^{-t})$ with the big Landau symbol $\mathfrak{O}$, whence $\lambda_2(F_{\ee^{-t}}\cap\Gamma)\in\mathfrak{o}(\ee^{t(D-d)})$.
    \item In Fig.~\ref{fig:fractals}\subref{fig:moeb1} the image of the Sierpinski gasket under the complex M\"obius transform $g_2\colon\mathbb C\to\mathbb C$, $z\mapsto(1-\sqrt{3}\im)z/((12+10\sqrt{3}\im)z-11-11\sqrt{3}\im)$ is depicted.
    Analogous to \subref{fig:bat} an associated IFS is given by $\{g_2\circ\phi_i\circ g_2^{-1}\mid i\in\{1,2,3\}\}$ and $\lambda_2(F_{\ee^{-t}}\cap\Gamma)\in\mathfrak{O}(\ee^{-t})$.
\end{enumerate}
\end{example}

A special case of the renewal theorems from the present article and of the above-stated geometric results is presented in the author's doctorate thesis \cite{Diss}. The setting of self-similar sets has been studied e.\,g.\ in \cite{Lapidus_Frankenhuysen_Springer,Falconer_Minkowski,Gatzouras,Steffen_gamma}. 
Self-conformal subsets of $\mathbb R$ and limit sets of graph-directed systems in $\mathbb R$, including Fuchsian groups of Schottky type, are treated in \cite{Marc1} and \cite{Marc2}, where results of \cite{Lalley} were applied. 
However, the setting of \cite{Lalley} is too restrictive for higher dimensional Euclidean spaces $\mathbb R^d$, making necessary the renewal theorems developed in the present article.

\subsection{Organisation of the article}
This article is organised as follows. 
In Sec.~\ref{sec:preliminaries} we introduce the relevant notions from Perron-Frobenius theory, which appear in the statements of the main theorems. 
Sec.~\ref{sec:results} is broken down into two parts. Sec.~\ref{sec:RTneu} is devoted to the presentation of the main results (Thms.~\ref{thm:RT1} and \ref{thm:RT2}) and in Sec.~\ref{sec:RTcorollaries} several corollaries are provided, demonstrating that the key renewal theorem for discrete measures, Lalley's renewal theorem and certain Markov renewal theorems can be recovered from our main theorems. 
Finally, in Sec.~\ref{sec:proofs} we provide the proofs of Thms.~\ref{thm:RT1} and \ref{thm:RT2}.

\section{Ruelle-Perron-Frobenius Theory -- the equilibrium distribution}\label{sec:preliminaries}
Here, we assemble preliminaries and fix notation. References for the exposition below are \cite{Bowen_equilibrium,Walters_Buch}.

\subsection{Subshifts of finite type -- Admissible paths of a random walk through $\Sigma$}\label{sec:subshifts}
We call $(\Sigma_A,\sigma)$ a \emph{subshift of finite type} or \emph{topological Markov chain}, since the transition rule only takes the present position into account and not the past.
If all entries of $A$ are ones, then $\Sigma_A=\Sigma^{\mathbb N}$ and $(\Sigma_A,\sigma)$ is called the \emph{full shift} on $M$ symbols. 
The set of \emph{admissible words of length $n\in\mathbb N$} is defined by
\begin{equation}
\Sigma_A^n\defeq\{\om\in\Sigma^{n}\mid A(\om_k,\om_{k+1})=1\ \text{for}\ k\leq n-1\}.
\end{equation}

If $\om$ has infinite length or length $m\geq n$ we define $\om\vert_n\defeq\om_1\cdots\om_n$ to be the subpath of length $n$. Further, $[\om]\defeq\{\omneu_1\omneu_2\cdots\in\Sigma_A\mid \omneu_i=\om_i\ \text{for}\ i\leq n\}$ is the \emph{$\om$-cylinder set} for $\om\in\Sigma_A^n$.

\subsection{Continuous and H\"older-continuous functions}\label{sec:cts}
Equip $\Sigma^{\mathbb N}$ with the product topology of the discrete topologies on $\Sigma$ and equip $\Sigma_A\subset\Sigma^{\mathbb N}$ with the subspace topology, i.\,e.\ the weakest topology with respect to which the canonical projections onto the coordinates are continuous. The spaces of continuous complex- and real-valued functions on $\Sigma_A$ are respectively denoted by $\mathcal C(\Sigma_A)$ and $\mathcal C(\Sigma_A,\mathbb R)$. Elements of $\mathcal C(\Sigma_A,\mathbb R)$ are called \emph{potential functions}.
\begin{definition}\label{defn:continuous}
  For $\codefun\in\mathcal C(\Sigma_A)$, $\alpha\in(0,1)$ and $n\in\mathbb N_0$ define 
  \begin{align*}
    \text{var}_n(\codefun)&\defeq\sup\{\lvert \codefun(\om)-\codefun(\omneu)\rvert\mid \om, \omneu\in\Sigma_A\ \text{and}\ \om_i=\omneu_i\ \text{for all}\ i\in\{1,\ldots,n\}\},\\
    \lvert \codefun\rvert_{\alpha}&\defeq\sup_{n\geq 0}\frac{\text{var}_n(\codefun)}{\alpha^{n}},\\
    \mathcal F_{\alpha}(\Sigma_A)&\defeq\{\codefun\in\mathcal C(\Sigma_A)\mid \lvert \codefun\rvert_{\alpha}<\infty\}\ \text{and}\  
    \mathcal F_{\alpha}(\Sigma_A,\mathbb R)\defeq \mathcal F_{\alpha}(\Sigma_A)\cap \mathcal C(\Sigma_A,\mathbb R).
  \end{align*}
Elements of $\mathcal F_{\alpha}(\Sigma_A)$ are called \emph{$\alpha$-H\"older continuous} functions on $\Sigma_A$. 
\end{definition}

\begin{definition}\label{defn:lattice}
  Functions $\codefun_1,\codefun_2\in\mathcal{C}(\Sigma_A)$ are called \emph{co-homologous}, if there exists $\psi\in\mathcal{C}(\Sigma_A)$ such that $\codefun_1-\codefun_2=\psi-\psi\circ\sigma$. A function $\codefun\in\mathcal{C}(\Sigma_A,\mathbb R)$ is said to be \emph{lattice}, if it is co-homologous to a function whose range is contained in a discrete subgroup of $\mathbb R$. Otherwise, we say that $\codefun$ is \emph{non-lattice}.
\end{definition}

\begin{remark}\label{rmk:strictlypos}
	$S_n\xi$ being strictly positive for some $n\in\mathbb N$ is equivalent to $\xi$ being co-homologous to a strictly positive function $\z$. To see this, note that $\z,\xi\in\mathcal C(\Sigma_A,\mathbb R)$ are co-homologous if and only if $S_m\z(x)=S_m\xi(x)$ for all $m\in\mathbb N$ and $x\in\Sigma_A$ with $\sigma^mx=x$. First, suppose that $S_n\xi$ is strictly positive. Let $\z\defeq S_n\xi/n$. Then $S_m\xi(x)=S_m\z(x)$ for all $x\in\Sigma_A$ with $\sigma^mx=x$. Second, suppose that $\xi=\z+\psi-\psi\circ\sigma$ for some $\psi\in\mathcal C(\Sigma_A,\mathbb R)$ and $\z$ satisfying $\z\geq\e>0$ for all $x\in\Sigma_A$. Then $S_m\xi(x)=S_m\z(x)+\psi(x)-\psi\circ\sigma^{m+1}(x)\geq m\e+\textup{var}_0(\psi)>0$ for sufficiently large $m\in\mathbb N$.
\end{remark}

\subsection{Topological pressure function and Gibbs measures}
  The \emph{topological pressure function} \linebreak $P\colon\mathcal C(\Sigma_A,\mathbb R)\to\mathbb R$ is given by the well-defined limit
  \begin{equation}\label{eq:preddure}
    P(\codefun)\defeq\lim_{n\to\infty}n^{-1}\log\sum_{\om\in\Sigma_A^n}\exp\sup_{\omneu\in[\om]}S_n\codefun(\omneu).
  \end{equation}
\begin{proposition}\label{prop:pressureconvex}
 Let $\codefun,\eta\in\mathcal C(\Sigma_A,\mathbb R)$ be so that $S_n\codefun$ is strictly positive on $\Sigma_A$, for some $n\in\mathbb N$. Then $s\mapsto P(\eta+s\codefun)$ is continuous, strictly monotonically increasing and convex with $\lim_{s\to -\infty}P(\eta+s\codefun)=-\infty$ and $\lim_{s\to\infty}P(\eta+s\codefun)=\infty$. Hence, there is a unique $\mdim\in\mathbb R$ for which $P(\eta-\mdim \codefun)=0$.
\end{proposition}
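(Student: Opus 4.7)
The plan is to deduce all four properties from the three fundamental properties of the topological pressure on $\mathcal C(\Sigma_A,\mathbb R)$ equipped with the sup norm: (i) continuity, (ii) convexity, (iii) monotonicity, namely $f\leq g$ implies $P(f)\leq P(g)$, together with the two identities $P(f+c)=P(f)+c$ for constants $c\in\mathbb R$ and the coboundary invariance $P(f+\psi-\psi\circ\sigma)=P(f)$ for $\psi\in\mathcal C(\Sigma_A,\mathbb R)$. All of these are standard consequences of the definition \eqref{eq:preddure} (see Walters, \emph{An Introduction to Ergodic Theory}, Ch.~9).

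First, I would invoke Remark~\ref{rmk:strictlypos} to replace $\codefun$ by a cohomologous strictly positive function: there exist $\psi\in\mathcal C(\Sigma_A,\mathbb R)$ and $\z\in\mathcal C(\Sigma_A,\mathbb R)$ with $\codefun=\z+\psi-\psi\circ\sigma$ and, by continuity of $\z$ on the compact set $\Sigma_A$, a constant $\e>0$ with $\z(x)\geq\e$ for all $x\in\Sigma_A$. Coboundary invariance then gives
\begin{equation*}
P(\eta+s\codefun)=P(\eta+s\z)\qquad\text{for all }s\in\mathbb R,
\end{equation*}
so it suffices to analyse $s\mapsto P(\eta+s\z)$.

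Continuity in $s$ is immediate from the continuity of $P$ with respect to the sup norm, since $s\mapsto\eta+s\z$ is continuous (indeed affine) from $\mathbb R$ into $\mathcal C(\Sigma_A,\mathbb R)$. Convexity follows likewise: if $\lambda\in[0,1]$ and $s_1,s_2\in\mathbb R$, then $\eta+(\lambda s_1+(1-\lambda)s_2)\z=\lambda(\eta+s_1\z)+(1-\lambda)(\eta+s_2\z)$, and convexity of $P$ as a functional on $\mathcal C(\Sigma_A,\mathbb R)$ yields the desired inequality. For strict monotonicity, fix $s_1<s_2$. Then pointwise on $\Sigma_A$
\begin{equation*}
\eta+s_2\z\geq\eta+s_1\z+(s_2-s_1)\e,
\end{equation*}
so by monotonicity and the constant-shift identity,
\begin{equation*}
P(\eta+s_2\z)\geq P(\eta+s_1\z)+(s_2-s_1)\e>P(\eta+s_1\z).
\end{equation*}
The same comparison, applied with $M\defeq\|\z\|_\infty<\infty$ on the other side (using $\z\leq M$), gives $P(\eta+s\z)\leq P(\eta)+sM$ for $s\leq 0$ and $P(\eta+s\z)\geq P(\eta)+s\e$ for $s\geq 0$, which forces $\lim_{s\to-\infty}P(\eta+s\z)=-\infty$ and $\lim_{s\to\infty}P(\eta+s\z)=\infty$.

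Finally, by continuity, strict monotonicity, and the two limits, the intermediate value theorem produces a unique $\mdim\in\mathbb R$ with $P(\eta-\mdim\codefun)=0$. The only genuine content beyond quoting the standard properties of $P$ is the use of Remark~\ref{rmk:strictlypos} to gain a pointwise lower bound $\z\geq\e>0$; this is the sole place where the hypothesis that $S_n\codefun$ is strictly positive for some $n$ is used, and I would expect no real obstacle beyond verifying that the constants $\e$ and $M$ may be chosen as above from compactness of $\Sigma_A$.
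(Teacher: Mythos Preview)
The paper does not supply a proof of this proposition; it is stated in Section~\ref{sec:preliminaries} as a preliminary fact (with references to \cite{Bowen_equilibrium,Walters_Buch}), so there is no proof in the paper to compare against. Your argument is the standard one and is correct in substance.

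One small slip in the limit argument: for $s\to-\infty$, the bound $\z\leq M$ points the wrong way after multiplication by $s\leq 0$ (it gives $s\z\geq sM$, hence only the \emph{lower} bound $P(\eta+s\z)\geq P(\eta)+sM$). What you actually need is once more $\z\geq\e$: for $s\leq 0$ this yields $s\z\leq s\e$, whence $P(\eta+s\z)\leq P(\eta)+s\e\to-\infty$. So the single constant $\e$ handles both limits and $M$ is unnecessary.
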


A finite Borel measure $\mu$ on $\Sigma_A$ is said to be a \emph{Gibbs measure} for $\codefun\in\mathcal C(\Sigma_A,\mathbb R)$ if there exists a constant $c>0$ such that 
\begin{equation}\label{eq:Gibbs}
	c^{-1}
	\leq \frac{\mu([\om\vert_n])}{\exp(S_n \codefun(\om)-n\cdot P(\codefun))}
	\leq c
\end{equation}
for every $\om\in\Sigma_A$ and $n\in\mathbb N$.

\subsection{Ruelle's Perron-Frobenius theorem}\label{sec:RuellesPF}
By \cite[Thm.~2{.}16, Cor.~2{.}17]{Walters_convergence} and \cite[Theorem 1{.}7]{Bowen_equilibrium}, for each $\codefun\in\mathcal F_{\alpha}(\Sigma_A,\mathbb R)$, some $\alpha\in(0,1)$, there exists a unique Borel probability measure $\nu_{\codefun}$ on $\Sigma_A$ satisfying $\mathcal L_{\codefun}^*\nu_{\codefun}=\eigenv_{\codefun}\nu_{\codefun}$ for some $\eigenv_{\codefun}>0$. This equation uniquely determines $\eigenv_{\codefun}$, which satisfies $\eigenv_{\codefun}=\exp(P(\codefun))$ and which coincides with the spectral radius of $\PF_{\codefun}$.
Further, there exists a unique strictly positive eigenfunction $\eigenf_{\codefun}\in\mathcal{C}(\Sigma_A,\mathbb R)$ satisfying $\mathcal L_{\codefun} h_{\codefun}=\eigenv_{\codefun} \eigenf_{\codefun}$ and $\int \eigenf_{\codefun}\textup{d}\nu_{\codefun}=1$.
Define $\mu_{\codefun}$ by $\textup{d}\mu_{\codefun}/\textup{d}\nu_{\codefun}=\eigenf_{\codefun}$. This is the unique $\sigma$-invariant Gibbs measure for the potential function $\codefun$. 
Additionally, under some normalisation assumptions we have convergence of the iterates of the Ruelle-Perron-Frobenius operator to the projection onto the one-dimensional subspace generated by its eigenfunction $\eigenf_{\codefun}$, namely
\begin{align}
\lim_{m\to\infty}\|\eigenv_{\codefun}^{-m}\mathcal L_{\codefun}^m\psi - \textstyle{\int}\psi\textup{d}\nu_{\codefun}\cdot \eigenf_{\codefun}\|_{\infty}=0\quad\forall\ \psi\in\mathcal C(\Sigma_A,\mathbb R).
\end{align}
Prop.~\ref{prop:pressureconvex} and the relation $\eigenv_{\codefun}=\exp(P(\codefun))$ imply the following.
\begin{proposition}\label{thm:eigenvalueone}
	Let $\codefun,\eta\in\mathcal C(\Sigma_A,\mathbb R)$ be such that for some $n\in\mathbb N$ the $n$-th Birkhoff sum $S_n\codefun$ of $\codefun$ is strictly positive on $\Sigma_A$. Then $s\mapsto\eigenv_{\eta+s\codefun}$ is continuous, strictly monotonically increasing, log-convex in $s\in\mathbb R$ with $\lim_{s\to -\infty}\eigenv_{\eta+s\codefun}=0$ and satisfies $\lim_{s\to\infty}\eigenv_{\eta+s\codefun}=\infty$. The unique $\mdim\in\mathbb R$ from Prop.~\ref{prop:pressureconvex} is the unique $\mdim\in\mathbb R$ for which $\eigenv_{\eta-\mdim\codefun}=1$. 
\end{proposition}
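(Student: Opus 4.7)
The plan is to deduce Proposition~\ref{thm:eigenvalueone} directly from Proposition~\ref{prop:pressureconvex} by composing with the exponential, using the identity $\eigenv_{\codefun}=\exp(P(\codefun))$ recorded in Sec.~\ref{sec:RuellesPF}. Since that identity is already at our disposal and Prop.~\ref{prop:pressureconvex} provides every structural statement about $s\mapsto P(\eta+s\codefun)$, there is essentially no new analytic content — the task is to transport each property along $\exp$.

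First I would observe that applying Prop.~\ref{prop:pressureconvex} to the pair $(\codefun,\eta)$ (whose hypothesis on positivity of $S_n\codefun$ matches ours verbatim) shows that $s\mapsto P(\eta+s\codefun)$ is continuous, strictly increasing, convex, with limits $-\infty$ and $+\infty$ at $\mp\infty$. Then I would write $\eigenv_{\eta+s\codefun}=\exp(P(\eta+s\codefun))$ and invoke the fact that $\exp\colon\mathbb R\to(0,\infty)$ is a continuous, strictly increasing homeomorphism, with $\exp(-\infty)=0$ and $\exp(\infty)=\infty$. Composition therefore immediately yields continuity, strict monotonicity, and the claimed boundary limits of $s\mapsto\eigenv_{\eta+s\codefun}$. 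Log-convexity is by definition: $\log\eigenv_{\eta+s\codefun}=P(\eta+s\codefun)$ is convex in $s$.

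For the last assertion, I would note that by strict monotonicity and continuity together with the limits, $s\mapsto\eigenv_{\eta+s\codefun}$ is a bijection from $\mathbb R$ onto $(0,\infty)$; in particular the equation $\eigenv_{\eta-\mdim\codefun}=1$ has a unique solution $\mdim\in\mathbb R$. By the identity $\eigenv_{\eta-\mdim\codefun}=\exp(P(\eta-\mdim\codefun))$, this is equivalent to $P(\eta-\mdim\codefun)=0$, so the $\mdim$ produced here coincides with the $\mdim$ from Prop.~\ref{prop:pressureconvex}.

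There is no genuine obstacle: the proof is a one-line consequence of pressure-convexity and the eigenvalue-pressure identity. The only mild subtlety worth spelling out is checking that the hypotheses of Prop.~\ref{prop:pressureconvex} are literally the ones we assume (positivity of $S_n\codefun$ for some $n$), so that its conclusions transfer without reformulation, and that the strict monotonicity of $\exp$ preserves the \emph{strict} monotonicity of pressure (needed both for the claim in the main sentence and for the uniqueness of $\mdim$).
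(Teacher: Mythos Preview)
Your proposal is correct and matches the paper's approach exactly: the paper simply states that Prop.~\ref{prop:pressureconvex} together with the relation $\eigenv_{\codefun}=\exp(P(\codefun))$ imply Prop.~\ref{thm:eigenvalueone}, which is precisely the composition-with-$\exp$ argument you spell out.
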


\begin{remark}
	In the proof of our renewal theorems, we will work with an analytic continuation of the pressure function to the complex domain and with complex Perron Frobenius theory. These complex quantities and their properties will be presented in Sec.~\ref{sec:PFanalyticp}.
\end{remark}

\section{Renewal theorems}\label{sec:results}
We break this section into two parts. In Sec.~\ref{sec:RTneu} we give a precise statement of the renewal theorems (Thms.~\ref{thm:RT1} and \ref{thm:RT2}). In Sec.~\ref{sec:RTcorollaries}, we describe how the classical key renewal theorem for finitely supported measures, Lalley's renewal theorem for counting measures, and versions of Markov renewal theorems can be recovered from the results of Sec.~\ref{sec:RTneu}.

\subsection{Renewal theorems with dependent interarrival times}\label{sec:RTneu}
%In the introduction we described the setting of a random experiment which lead to a deterministic renewal function and renewal equation. Here, we present renewal theorems on the limiting behaviour of this renewal function. 
Here, we make our assumptions on the renewal function $N$ from \eqref{eq:intro:renfcn} more precise. 
We fix $x\in\Sigma_A$ and take $\alpha\in(0,1)$.
Further, $\codefun,\eta\in\mathcal{F}_{\alpha}(\Sigma_A,\mathbb R)$ are so that $S_n\codefun$ is strictly positive on $\Sigma_A$ for some $n\in\mathbb N$, see also Rem.~\ref{rmk:strictlypos}. Henceforth, we let $\mdim>0$ denote the unique real for which $\eigenv_{\eta-\mdim\codefun}=1$ (see Prop.~\ref{thm:eigenvalueone}). For $y\in\Sigma_A$ and $t\in\mathbb R$ we write 
\[
\widetilde{f}_y(t)=\chi(t)\cdot f_y(t)
\]
with non-negative but not identically zero $\codefunc\in\mathcal{F}_{\alpha}(\Sigma_A,\mathbb R)$, where $\renfcn_y\colon\mathbb R\to\mathbb R$, for $y\in\Sigma_A$, needs to satisfy some regularity conditions. 
One of these requires equi directly Riemann integrability, a condition which is motivated by an assumption of the classical key renewal theorem (see Sec.~\ref{sec:keyRT}).

\begin{definition}\label{def:dRi}
   For a function $f\colon\mathbb R\to\mathbb R$, $h>0$ and $k\in\mathbb Z$ set
   \begin{align*}
     \underline{m}_k(f,h)&\defeq\inf\{f(t)\mid (k-1)h\leq t<kh\}\qquad\text{and}\\
     \overline{m}_k(f,h)&\defeq\sup\{f(t)\mid (k-1)h\leq t<kh\}.
   \end{align*}
   The function $f$ is called \emph{directly Riemann integrable (d.\,R.\,i.)} if for some sufficiently small $h>0$
   \begin{align*}
     \underline{R}(f,h)\defeq\sum_{k\in\mathbb Z}h\cdot\underline{m}_k(f,h)\qquad\text{and}\qquad 
     \overline{R}(f,h)\defeq\sum_{k\in\mathbb Z}h\cdot\overline{m}_k(f,h)
   \end{align*}
   are finite and tend to the same limit as $h\to 0$.
   We call a family of functions $\{f_x\colon\mathbb R\to\mathbb R \mid x\in I\}$ with some index set $I$ \emph{equi directly Riemann integrable (equi d.\,R.\,i.)} if for some sufficiently small $h>0$
   \begin{align*}
     \underline{R}(h)\defeq\sum_{k\in\mathbb Z}h\cdot\inf_{x\in I}\underline{m}_k(f_x,h)\qquad\text{and}\qquad 
     \overline{R}(f,h)\defeq\sum_{k\in\mathbb Z}h\cdot\sup_{x\in I}\overline{m}_k(f_x,h)
   \end{align*}
   are finite and tend to the same limit as $h\to 0$.
\end{definition}
D.\,R.\,i.\ excludes wild oscillations of the function at infinity and is stronger than Riemann integrability. For further insights into this notion we refer the reader to \cite[Ch.~XI]{Feller} and \cite[Ch.~B.V]{Asmussen}.\\

The \emph{regularity conditions} are as follows. 

\begin{enumerate}[label=(\Alph*)]
  \item\label{it:Lebesgueintegrable} \emph{Lebesgue integrability.} For any $x\in\Sigma_A$ the Lebesgue-integral
  \[
  \int_{-\infty}^{\infty}\ee^{-t\mdim}\lvert\renfcn_x(t)\rvert\textup{d}t
  \]      
  exists.
  \item\label{it:boundedC} \emph{Boundedness of $N$.} There exists $\mathfrak C>0$ such that $\ee^{-t\mdim}N^{\text{abs}}(t,x)\leq\mathfrak C$ for all $x\in\Sigma_A$ and $t\in\mathbb R$, where
  \[
  N^{\text{abs}}(t,x)\defeq\sum_{n=0}^{\infty}\sum_{y:\sigma^n y=x}\codefunc(y)\lvert\renfcn_y(t-S_n\codefun(y))\rvert\ee^{S_n\eta(y)}.
  \]
  \item\label{it:boundedneg} \emph{Exponential decay of $N$ on the negative half-axis.} There exist $\tilde{\mathfrak C}>0, s>0$ and $t^*\in\mathbb R$ such that $\ee^{-t\mdim}N^{\text{abs}}(t,x)\leq \tilde{\mathfrak C}\ee^{st}$ for all $t\leq t^*$.
\end{enumerate}
In the non-lattice situation we additionally need the following.
\begin{enumerate}[label=(\Alph*)]\setcounter{enumi}{3}
  \item\label{it:regular} \emph{Non-oscillatory.} The renewal function $N$ does not oscillate wildly at infinity, in particular at least one of the following is satisfied.
  \begin{enumerate}
  \item\label{it:monotonic} The function $t\mapsto \renfcn_x(t)$ is monotonic for any $x\in\Sigma_A$.
  \item\label{it:nldRi} The family $\{t\mapsto\ee^{-t\mdim}\lvert\renfcn_x(t)\rvert\mid x\in\Sigma_A\}$ is equi d.\,R.\,i.
  \item\label{it:nonoscill} The oscillation of $N$ is small in the sense that
  \begin{align*}
        &\liminf_{\e\to 0}\limsup_{r\to\infty}\sup_{\tilde{\e}\in[0,\e]}\ee^{-(r-\tilde{\e})\mdim}N(r-\tilde{\e},x)\\
        &\qquad=\limsup_{\e\to 0}\liminf_{r\to\infty}\inf_{\tilde{\e}\in[0,\e]}\ee^{-(r-\tilde{\e})\mdim}N(r-\tilde{\e},x).
  \end{align*}
  \end{enumerate}
\end{enumerate}
Note that Conditions \ref{it:Lebesgueintegrable} to \ref{it:regular} are a weakening of imposed assumptions of known renewal theorems. 
In Thm.~\ref{thm:RT2} it is for instance shown that equi d.\,R.\,i.\ of $\{t\mapsto\ee^{-t\mdim}\lvert \renfcn_x(t)\rvert\mid x\in\Sigma_A\}$ and exponential decay of $t\mapsto \ee^{-t\mdim} \renfcn_x(t)$ on the negative half-axis imply  \ref{it:Lebesgueintegrable} to \ref{it:regular}.
The conditions \ref{it:Lebesgueintegrable} to \ref{it:boundedneg} and \ref{it:monotonic} are motivated by \cite{Lalley}.

For $t\in\mathbb R$ we write $\lfloor t\rfloor$ for 
the largest integer $k\in\mathbb Z$ satisfying $k\leq t$. Moreover, we set $\{t\}\defeq t-\lfloor t\rfloor\in[0,1)$. Notice, for $t\in\mathbb R$ positive, $\lfloor t\rfloor$ is the integer part and $\{t\}$ is the fractional part of $t$.

\begin{theorem}[Renewal theorem]\label{thm:RT1}
   Assume that $x\mapsto \renfcn_x(t)$ is $\alpha$-H\"older continuous for any $t\in\mathbb R$ and that Conditions \ref{it:Lebesgueintegrable} to \ref{it:boundedneg} hold.
   \begin{enumerate}
   \item\label{it:RT1:nl} If $\codefun$ is non-lattice and \ref{it:regular} is satisfied, then 
   \begin{equation*}
    N(t,x)\sim\ee^{t\mdim}\underbrace{
\frac{\eigenf_{\eta-\mdim \codefun}(x)}{\int \codefun\textup{d}\mu_{\eta-\mdim \codefun}}\int_{\Sigma_A} \codefunc(y)\int_{-\infty}^{\infty}\ee^{-T\mdim}\renfcn_y(T)\textup{d}T\textup{d}\nu_{\eta-\mdim \codefun}(y)
}_{\eqdef G(x)}
   \end{equation*}
   as $t\to\infty$, uniformly for $x\in\Sigma_A$.
   \item\label{it:RT1:l} Assume that $\codefun$ is lattice and let $\z,\psi\in\mathcal{C}(\Sigma_A,\mathbb R)$ satisfy the relation
   \[
   \codefun-\z=\psi-\psi\circ\sigma,
   \]
   where $\z(\Sigma_A)\subseteq\aaa\mathbb Z$ for some $\aaa>0$. Suppose that $\codefun$ is not co-homologous to any function with values in a proper subgroup of $\aaa\mathbb Z$. Then
  \begin{align*}
    N(t,x)
    \sim \ee^{t\mdim}\tilde{G}_x(t)
  \end{align*}
  as $t\to\infty$, uniformly for $x\in\Sigma_A$. Here $\tilde{G}_x$ is periodic with period $\aaa$ and
  \begin{align*}
    \tilde{G}_x(t)
    &\defeq \int_{\Sigma_A}\codefunc(y)
\sum_{l=-\infty}^{\infty}\ee^{-\aaa l\mdim}\renfcn_y\left(\aaa l+\aaa\left\{\tfrac{t+\psi(x)}{\aaa}\right\}-\psi(y)\right)
    \textup{d}\nu_{\eta-\mdim\z}(y)\\
    &\qquad\times \ee^{-\aaa\big{\{}\frac{t+\psi(x)}{\aaa}\big{\}}\mdim}
    \frac{\aaa\ee^{\mdim\psi(x)}}{\int\z\textup{d}\mu_{\eta-\mdim\z}}\cdot\eigenf_{\eta-\mdim\z}(x).
  \end{align*}
  \item\label{it:RT1:av}We always have 
  \begin{equation*}
  \lim_{t\to\infty}t^{-1}\int_0^{t}\ee^{-T\mdim}N(T,x)\textup{d}T=G(x).
  \end{equation*}
  \end{enumerate}
\end{theorem}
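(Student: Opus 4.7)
Following the analytic approach of Lalley \cite{Lalley}, the plan is to study $N(t,x)$ through the Laplace/Fourier transform and the spectral theory of the analytic family of operators $\{\PF_{\eta-(\mdim+\im\tau)\codefun}\}_{\tau\in\mathbb R}$. Writing out the series representation \eqref{eq:intro:N}, a formal computation (after substituting $u=T-S_n\codefun(y)$) gives
\begin{equation*}
\int_{-\infty}^\infty \ee^{-(\mdim+\im\tau)T}N(T,x)\,\textup{d}T
=\sum_{n=0}^\infty \PF_{\eta-(\mdim+\im\tau)\codefun}^n(\codefunc\hat{\renfcn}_\tau)(x),
\end{equation*}
where $\hat{\renfcn}_\tau(y):=\int_{-\infty}^\infty \ee^{-(\mdim+\im\tau)T}\renfcn_y(T)\,\textup{d}T$. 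Condition \ref{it:Lebesgueintegrable} together with H\"older continuity of $y\mapsto\renfcn_y(T)$ gives $\hat{\renfcn}_\tau\in\mathcal F_\alpha(\Sigma_A)$, while \ref{it:boundedC} and \ref{it:boundedneg} allow the identity to be made rigorous after regularisation. By Prop.~\ref{thm:eigenvalueone} the operator $\PF_{\eta-\mdim\codefun}$ has spectral radius one with simple dominant eigenvalue and eigenfunction $\eigenf_{\eta-\mdim\codefun}$; Kato perturbation theory then yields a simple dominant eigenvalue $\eigenv(\tau)$ of $\PF_{\eta-(\mdim+\im\tau)\codefun}$ in a complex neighbourhood of $0$, with $\eigenv(0)=1$ and $\eigenv'(0)=-\im\int\codefun\,\textup{d}\mu_{\eta-\mdim\codefun}$ (obtained by implicit differentiation of $P(\eta-s\codefun)=0$).

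For part \ref{it:RT1:nl}, the non-lattice condition on $\codefun$ sharpens this to a uniform spectral gap: the spectral radius of $\PF_{\eta-(\mdim+\im\tau)\codefun}$ is strictly less than $1$ on every compact subset of $\mathbb R\setminus\{0\}$. Isolating the rank-one spectral projection near $\tau=0$, the series above converges for $\tau\ne 0$ and has leading behaviour $G(x)/(\im\tau)$ near $\tau=0$. A Fourier-inversion argument---first smoothing $\ee^{-t\mdim}N(t,x)$ by convolution with an approximate identity of compactly supported transform, then removing the regularisation using condition \ref{it:regular}---yields
\[
\ee^{-t\mdim}N(t,x)\xrightarrow{t\to\infty}\frac{\eigenf_{\eta-\mdim\codefun}(x)}{\int\codefun\,\textup{d}\mu_{\eta-\mdim\codefun}}\int_{\Sigma_A}\codefunc(y)\hat{\renfcn}_0(y)\,\textup{d}\nu_{\eta-\mdim\codefun}(y)=G(x),
\]
with uniformity in $x$ flowing from uniform bounds on the spectral decomposition in $\mathcal F_\alpha(\Sigma_A)$.

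Part \ref{it:RT1:l} is analogous: in the lattice situation, $\eigenv(\tau)=1$ occurs precisely at $\tau\in(2\pi/\aaa)\mathbb Z$, producing a lattice of simple poles whose residues assemble via Poisson summation into the periodic function $\tilde G_x$; the cohomology relation $\codefun-\z=\psi-\psi\circ\sigma$ is used to transfer spectral data from $\codefun$ to the lattice-valued $\z$ at the level of residues. Part \ref{it:RT1:av} avoids \ref{it:regular}: with the real Laplace transform $\hat u(s,x):=\int_0^\infty \ee^{-sT-T\mdim}N(T,x)\,\textup{d}T$, the same pole analysis shows $s\hat u(s,x)\to G(x)$ as $s\to 0^+$, and a Karamata-type Tauberian theorem---applicable thanks to boundedness from \ref{it:boundedC}---converts this into $t^{-1}\int_0^t \ee^{-T\mdim}N(T,x)\,\textup{d}T\to G(x)$.

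The two main technical obstacles both lie in part \ref{it:RT1:nl}. First, one must establish the uniform spectral gap for $\PF_{\eta-(\mdim+\im\tau)\codefun}$ on compact subsets of $\mathbb R\setminus\{0\}$; this requires a Lasota-Yorke / Doeblin-Fortet inequality for the full complex family in $\mathcal F_\alpha(\Sigma_A)$, together with a strict-aperiodicity argument derived from the non-lattice hypothesis to rule out peripheral eigenvalues. Second, transferring spectral information about the Laplace transform into a pointwise asymptotic for $N(t,x)$ under the weak regularity \ref{it:regular}---rather than the classical d.\,R.\,i.\ condition---demands a delicate smoothing argument in which each of the three alternatives \ref{it:monotonic}--\ref{it:nonoscill} is employed to control residual oscillations as the smoothing parameter is removed.
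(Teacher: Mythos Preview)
Your outline for part~\ref{it:RT1:nl} matches the paper's argument closely: the paper also forms the Fourier--Laplace transform $L(z,x)=\int \ee^{zT}\ee^{-T\mdim}N(T,x)\,\textup{d}T$, identifies the simple pole at $z=0$ with residue $-G(x)$ via Cor.~\ref{cor:Lalley7.2}, and then inverts after convolving with a mollifier $\prob_\e$ whose transform has compact support; Lemma~\ref{lem:suffices} is precisely the smoothing-removal step where the three alternatives in \ref{it:regular} are treated separately. One remark: the spectral input you list as an obstacle (the uniform gap for $\PF_{\eta-(\mdim+\im\tau)\codefun}$ away from $\tau=0$) is not developed from a Lasota--Yorke inequality in the paper but is imported wholesale from Pollicott's Thm.~\ref{thm:TheoremBLalley} and Lalley's Props.~\ref{prop:Lalley7.12}--\ref{prop:Lalley7.4}, so no new quasi-compactness argument is needed.

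For part~\ref{it:RT1:l} the paper takes a different route from your Poisson-summation sketch. Rather than assembling residues over a lattice of poles, it forms the \emph{discrete} Laplace transform $\hat N^\beta(z,x)=\sum_{l\in\mathbb Z}\ee^{lz}N(\aaa l+\beta-\psi(x),x)$ directly, uses the cohomology $\codefun-\z=\psi-\psi\circ\sigma$ to rewrite this as $(I-\PF_{\eta+\aaa^{-1}z\z})^{-1}$ applied to an explicit function, and then---after subtracting the single pole at $z=-\aaa\mdim$---substitutes $\tilde z=\ee^{z+\aaa\mdim}$ to obtain a power series with radius of convergence $>1$. Exponential decay of the coefficients then gives the asymptotic immediately. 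This is more elementary than controlling an infinite sum of residues.

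For part~\ref{it:RT1:av} your Tauberian route is genuinely different from the paper's. The paper does not invoke Karamata: it deduces~\ref{it:RT1:av} from \ref{it:RT1:nl} and \ref{it:RT1:l} together with boundedness \ref{it:boundedC}, computing explicitly $\aaa^{-1}\int_0^\aaa \tilde G_x(T)\,\textup{d}T=G(x)$ in the lattice case via the identities $\ee^{\mdim\psi}\eigenf_{\eta-\mdim\z}=\eigenf_{\eta-\mdim\codefun}$, $\ee^{-\mdim\psi}\textup{d}\nu_{\eta-\mdim\z}=\textup{d}\nu_{\eta-\mdim\codefun}$. Your approach would work, but note that a Hardy--Littlewood/Karamata theorem needs a one-sided Tauberian condition; boundedness from \ref{it:boundedC} does suffice, though you should make this explicit. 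The paper's approach has the advantage of yielding~\ref{it:RT1:av} as a corollary with no additional analytic machinery, at the cost of going through the full lattice analysis first.
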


\begin{theorem}\label{thm:RT2}
   Assume that $x\mapsto \renfcn_x(t)$ is $\alpha$-H\"older continuous for any $t\in\mathbb R$ and that the family $\{t\mapsto\ee^{-t\mdim}\lvert\renfcn_x(t)\rvert\mid x\in\Sigma_A\}$ is equi d.\,R.\,i. If there exist $\mathfrak{C}',s>0$ such that $\ee^{-t\mdim}\lvert\renfcn_x(t)\rvert\leq\mathfrak{C}'\ee^{st}$ for $t<0$ and $x\in\Sigma_A$, then \ref{it:Lebesgueintegrable} to \ref{it:regular} are satisfied and thus the statements of Thm.~\ref{thm:RT1} all hold.
\end{theorem}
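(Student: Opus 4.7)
The strategy is to verify the four regularity conditions \ref{it:Lebesgueintegrable}--\ref{it:regular} of Thm.~\ref{thm:RT1} from the hypotheses of Thm.~\ref{thm:RT2}; the conclusion of Thm.~\ref{thm:RT1} then applies directly. Among the four, \ref{it:regular} is free: its sufficient condition \ref{it:nldRi} is precisely the equi d.\,R.\,i.\ assumption of Thm.~\ref{thm:RT2}. For \ref{it:Lebesgueintegrable}, the upper Riemann sum $\sum_k h\cdot\sup_x\overline m_k(\ee^{-T\mdim}|\renfcn_x|,h)$, finite by equi d.\,R.\,i., dominates $\int_{-\infty}^\infty\ee^{-T\mdim}|\renfcn_x(T)|\,\textup dT$ uniformly in $x$.

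For \ref{it:boundedneg} the plan is to exploit the identity
\[
\ee^{-t\mdim}|\renfcn_y(t-S_n\codefun(y))|\ee^{S_n\eta(y)}=\ee^{S_n(\eta-\mdim\codefun)(y)}\cdot\ee^{-(t-S_n\codefun(y))\mdim}|\renfcn_y(t-S_n\codefun(y))|.
\]
By Rem.~\ref{rmk:strictlypos}, $\codefun$ is cohomologous to a strictly positive function, so one can choose $t^*$ negative enough that $t-S_n\codefun(y)<0$ for every $n\in\mathbb N_0$ and every $y\in\Sigma_A$ with $\sigma^ny=x$, whenever $t\leq t^*$. The exponential decay hypothesis then bounds the second factor by $\mathfrak C'\ee^{s(t-S_n\codefun(y))}$, and summing yields
\[
\ee^{-t\mdim}N^{\textup{abs}}(t,x)\leq\mathfrak C'\ee^{st}\sum_{n=0}^\infty\PF_{\eta-(\mdim+s)\codefun}^n\codefunc(x).
\]
By Prop.~\ref{thm:eigenvalueone} the eigenvalue $\eigenv_{\eta-(\mdim+s)\codefun}$ is strictly smaller than $\eigenv_{\eta-\mdim\codefun}=1$, so the Ruelle--Perron--Frobenius theory of Sec.~\ref{sec:RuellesPF} makes the series converge uniformly in $x$, establishing \ref{it:boundedneg}.

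The main obstacle is \ref{it:boundedC}. My plan is to partition the sum according to the bin $[(k-1)h,kh)$ containing $t-S_n\codefun(y)$ and dominate
\[
\ee^{-t\mdim}N^{\textup{abs}}(t,x)\leq\sum_{k\in\mathbb Z}M_k\cdot W_k(t,x),
\]
where $M_k\defeq\sup_{y\in\Sigma_A,\,T\in[(k-1)h,kh)}\ee^{-T\mdim}|\renfcn_y(T)|$ and $W_k(t,x)$ is the weighted count $\sum_{n,y:\sigma^ny=x,\,t-S_n\codefun(y)\in[(k-1)h,kh)}\codefunc(y)\ee^{S_n(\eta-\mdim\codefun)(y)}$. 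Equi d.\,R.\,i.\ makes $\sum_k hM_k$ finite, so the task reduces to producing a uniform bound $W_k(t,x)\leq Ch$, equivalently a uniform bound on the mass of length-$h$ intervals under the positive measure $\mu_x\defeq\sum_{n,y:\sigma^ny=x}\codefunc(y)\ee^{S_n(\eta-\mdim\codefun)(y)}\delta_{S_n\codefun(y)}$ on $\mathbb R$.

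The hard part will be establishing this uniform window bound \emph{without} invoking Thm.~\ref{thm:RT1}, which would be circular. For $t$ in a bounded interval only finitely many $n$ contribute (using $S_n\codefun\geq c n$ after passing to a strictly positive representative of the cohomology class), so the bound is elementary. For large $t$, I would extract the bound from the complex Ruelle--Perron--Frobenius analysis of Sec.~\ref{sec:proofs}: the Laplace transform $\sum_n\PF^n_{\eta-(\mdim+z)\codefun}\codefunc(x)=(I-\PF_{\eta-(\mdim+z)\codefun})^{-1}\codefunc(x)$ is meromorphic in a strip around $\Re z=0$ with a simple pole at $z=0$ (coming from the simple eigenvalue $\eigenv_{\eta-\mdim\codefun}=1$), and a Paley--Wiener / Wiener--Ikehara-type inversion turns this pole and the decay on vertical lines into the required uniform bound on $\mu_x([u,u+h])$. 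This analytic inversion is the technical heart of the argument.
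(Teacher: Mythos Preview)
Your verification of \ref{it:Lebesgueintegrable}, \ref{it:boundedneg} and \ref{it:regular} is correct and matches the paper's argument essentially line for line (the paper also chooses $t^*$ so that all $t-S_n\codefun(y)$ are negative and then sums the geometric-type series via $\eigenv_{\eta-(\mdim+s)\codefun}<1$).

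For \ref{it:boundedC} your reduction to a uniform bound on $W_k(t,x)$ is valid, but the proposed complex-analytic attack is both unnecessarily heavy and not quite right as stated. Your claim that $(I-\PF_{\eta-(\mdim+z)\codefun})^{-1}$ is meromorphic in a strip around $\Re z=0$ with \emph{only} a simple pole at $z=0$ holds only in the non-lattice case; in the lattice case Prop.~\ref{prop:Lalley7.4} shows the function is $2\pi\im$-periodic and has a simple pole at every $z\in 2\pi\im\mathbb Z$, so a straight Wiener--Ikehara inversion does not apply. Even in the non-lattice case, turning the pole structure into a \emph{uniform} window bound (rather than an asymptotic) needs the smoothing/Parseval machinery that is essentially the proof of Thm.~\ref{thm:RT1}\ref{it:RT1:nl} itself, so you are close to re-deriving the theorem you are trying to feed into.

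The paper avoids all of this with an elementary and uniform argument: iterate the renewal equation $n$ times (where $S_n\codefun\geq\kappa>0$), normalise by the eigenfunction to get a function $M(t,x)=\ee^{-t\mdim}N(t,x)/\eigenf_{\eta-\mdim\codefun}(x)$ satisfying
\[
\overline M(t)\leq\sup_{y}\overline M(t-S_n\codefun(y))+\sup_{t'\in(t-\kappa,t]}\sum_{i=0}^{n-1}\|\PF^i_{\eta-\mdim\codefun}(\codefunc\,\renfcnn_{i,\cdot}(t'))/\eigenf_{\eta-\mdim\codefun}\|_\infty,
\]
where $\overline M(t)=\sup_{t'\in(t-\kappa,t],\,x}M(t',x)$ and $\renfcnn_{i,y}(t)=\ee^{-(t-S_i\codefun(y))\mdim}\renfcn_y(t-S_i\codefun(y))$. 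Iterating this $k$ times pushes the $\overline M$-term down to $t-\sum_j S_n\codefun(x^j)\leq t^*$, where it is bounded by the already-established \ref{it:boundedneg}. The remaining sum over $m$ is then controlled directly by the equi d.\,R.\,i.\ hypothesis, since each $t'\mapsto\|\PF^i_{\eta-\mdim\codefun}(\codefunc\,\renfcnn_{i,\cdot}(t'))/\eigenf_{\eta-\mdim\codefun}\|_\infty$ is itself d.\,R.\,i.\ and the windows $(t-\sum_{j\leq m}S_n\codefun(x^j)-\kappa,\,t-\sum_{j\leq m}S_n\codefun(x^j)]$ are disjoint. The key point you missed is that \ref{it:boundedneg}, which you already proved, is exactly the ingredient that terminates the iteration and makes \ref{it:boundedC} elementary---no complex analysis is needed.
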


\subsection[Corollaries to the renewal theorems]{Corollaries to Thms.~\ref{thm:RT1} and \ref{thm:RT2}}
\label{sec:RTcorollaries}

In this section we will see how established renewal theorems can be obtained as corollaries to Thms.~\ref{thm:RT1} and \ref{thm:RT2}. 

\subsubsection{The key renewal theorem for finitely supported measures}\label{sec:keyRT}

Thms.~\ref{thm:RT1} and \ref{thm:RT2} deal with renewal functions $N\colon\mathbb R\times\Sigma_A\to\mathbb R$. The special case of Thm.~\ref{thm:RT2} that $N$ is independent of $\Sigma_A$ gives the classical key renewal theorem: 

$N$ being independent of $\Sigma_A$ can be achieved by the following assumptions. 
First, $\Sigma_A=\Sigma^{\mathbb N}$ (i.\,e.\ full shift).
Second, $\renfcn_x = \renfcn$ is independent of $x\in\Sigma^{\mathbb N}$ implying that $z\colon\mathbb R\to\mathbb R$ with $z(t)\defeq\ee^{-\mdim t}\renfcn(t)$ is d.\,R.\,i. 
Third, $\codefunc=\mathds 1_{\Sigma_A}$.
Fourth and most importantly, $\codefun$ and $\eta$ are constant on cylinder sets of length one.
To emphasise local constancy, write 
$s_{\omneu}\defeq S_n\codefun(\omneu_1\cdots\omneu_n\om)$ and
$p_{\omneu}\defeq\exp\left[S_n(\eta-\mdim\codefun)(\omneu_1\cdots\omneu_n\om)\right]$
for $\omneu=\omneu_1\cdots\omneu_n\in\Sigma^n$ and $\om=\om_1\om_2\cdots\in\Sigma^{\mathbb N}$. 
Setting $Z(t)\defeq\ee^{-\mdim t}N(t)$
we obtain that 
\begin{equation}\label{eq:reneqFeller}
  Z(t)=\sum_{n=0}^{\infty}\sum_{\om\in\Sigma^n}z(t-s_{\om})p_{\om} \quad\text{and}\quad 
  Z(t)
  =\sum_{i=1}^M Z(t - s_i)p_i+ z(t),
\end{equation}
for $t\in\mathbb R$. The latter equation of \eqref{eq:reneqFeller} is the \emph{classical renewal equation}. It is equivalent to $Z=Z\star F+z$, where $F$ is the distribution which assigns mass $p_i$ to $s_i$ and where $\star$ denotes the convolution operator.
The assumption $S_n\codefun>0$ for some $n\in\mathbb N$ implies $s_i>0$ for all $i\in\Sigma$. Thus, $F$ is concentrated on $(0,\infty)$. 
On the other hand, any vector $(s_1,\ldots, s_M)$ with $s_1,\ldots,s_M>0$ determines a strictly positive function $\codefun\in\mathcal F_{\alpha}(\Sigma^{\mathbb N},\mathbb R)$ via $\codefun(\om_1\om_2\cdots)\defeq s_{\om_1}$.
Furthermore, in the setting of Thm.~\ref{thm:RT2}, $(p_1,\ldots,p_M)$ is a probability vector with $p_i\in(0,1)$ as
\[
  0
  =P(\eta-\mdim\codefun)
  =\lim_{n\to\infty}n^{-1}\log\bigg{(}\sum_{i\in\Sigma}p_i\bigg{)}^n
  =\log\sum_{i\in\Sigma}p_i
\]
by Prop.~\ref{prop:pressureconvex}. Thus, $F$ is a probability distribution.
On the other hand, any probability vector $(p_1,\ldots,p_M)$ with $p_1,\ldots,p_M\in(0,1)$ determines $\eta\in\mathcal F_{\alpha}(\Sigma^{\mathbb N},\mathbb R)$ via $\eta(\om_1\om_2\cdots)\defeq\log(p_{\om_1}\ee^{\mdim s_{\om_1}})$.

Consequently, Thm.~\ref{thm:RT2}  provides the asymptotic behaviour of $Z$ under the assumptions that $(p_1,\ldots,p_M)$ is a probability vector and that $s_1,\ldots,s_M>0$. In order to present the asymptotic terms in a common form, observe that $  \PF_{\eta-\mdim\codefun}\mathbf{1}=\mathbf{1}(x)$
for any $x\in\Sigma^{\mathbb N}$, where $\mathbf{1}=\mathds{1}_{\Sigma^{\mathbb N}}$. Thus, 
\begin{equation*}
        \eigenf_{\eta-\mdim\codefun}=\mathbf{1}
        \qquad\text{and}
        \qquad \mu_{\eta-\mdim\codefun}([i])=\nu_{\eta-\mdim\codefun}([i])= p_i,
\end{equation*}
where the last equality follows by considering the dual operator of $\PF_{\eta-\mdim\codefun}$.
If $\codefun$ is lattice then the range of $\codefun$ itself lies in a discrete subgroup of $\mathbb R$: If there exist $\z,\psi\in\mathcal C(\Sigma^{\mathbb N},\mathbb R)$ with $\codefun-\z=\psi-\psi\circ\sigma$ and $\z(\Sigma^{\mathbb N})\subset\aaa\mathbb Z$ for some $\aaa>0$, then $\codefun$ and $\z$ need to coincide on  $\{\om\in\Sigma^{\mathbb N}\mid\om=\sigma\om\}$. As every cylinder set of length one contains a periodic word of period one the claim follows. Hence, we can choose $\z=\codefun$ and $\psi$ to be the constant zero-function. 
We deduced:

\begin{corollary}[Key renewal theorem]\label{cor:Feller}
   Let $(p_1,\ldots, p_M)$ be a probability vector with $p_i\in(0,1)$ and $s_i>0$ for $i\in\{1,\ldots,M\}$, $M\geq 2$. Denote by $z\colon\mathbb R\to\mathbb R$ a d.\,R.\,i.\ function with $z(t)\leq \mathfrak{C}'\ee^{st}$ for $t<0$, some $\mathfrak{C}',s>0$. Let $Z\colon\mathbb R\to\mathbb R$ be as in \eqref{eq:reneqFeller}.
Then the following hold:
\begin{enumerate}
        \item If  $\{s_1,\ldots,s_M\}$ does not lie in a discrete subgroup of $\mathbb R$, then as $t\to \infty$
        \[
        Z(t)\sim \frac{1}{\sum_{i=1}^M p_i s_i}\int_{-\infty}^{\infty} z(T)\textup{d}T.
        \]
        \item If $\{s_1,\ldots, s_M\}\subset \aaa\cdot\mathbb Z$ and $\aaa>0$ is maximal, then as $t\to \infty$
        \[
        Z(t)\sim \frac{\aaa}{\sum_{i=1}^M p_i s_i}\sum_{l=-\infty}^{\infty} z(\aaa l+t).
        \]
        \item We always have 
        \[
        \lim_{t\to\infty}t^{-1}\int_0^tZ(T)\textup{d}T
        = \frac{1}{\sum_{i=1}^M p_i s_i}\int_{-\infty}^{\infty} z(T)\textup{d}T.
        \]      
\end{enumerate}
\end{corollary}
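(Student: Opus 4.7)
The plan is to realise the hypotheses of Thm.~\ref{thm:RT2} in this finitely supported setting and to translate the conclusions of Thm.~\ref{thm:RT1} term-by-term. First I would take $\Sigma_A \defeq \Sigma^{\mathbb N}$ (the full shift) and, for any chosen $\mdim>0$, define $\codefun(\om_1\om_2\cdots)\defeq s_{\om_1}$ and $\eta(\om_1\om_2\cdots)\defeq \log p_{\om_1}+\mdim s_{\om_1}$; both are constant on one-cylinders, so lie in $\mathcal F_{\alpha}(\Sigma^{\mathbb N},\mathbb R)$ for every $\alpha\in(0,1)$. Since $\eta-\mdim\codefun=\log p_{\om_1}$ is independent of $\mdim$ and $\PF_{\eta-\mdim\codefun}\mathbf 1(x)=\sum_i p_i = 1$, Ruelle--Perron--Frobenius theory gives $\eigenv_{\eta-\mdim\codefun}=1$ with $\eigenf_{\eta-\mdim\codefun}=\mathbf 1$, and by duality $\nu_{\eta-\mdim\codefun}([i])=\mu_{\eta-\mdim\codefun}([i])=p_i$, so $\int\codefun\,\textup{d}\mu_{\eta-\mdim\codefun}=\sum_i p_i s_i$. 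Finally, I would set $\codefunc\defeq\mathbf 1$ and $\renfcn_x(t)\defeq\ee^{\mdim t}z(t)$, so that $\tilde\renfcn_x=\renfcn_x$ and $\ee^{-\mdim t}|\renfcn_x(t)|=|z(t)|$.

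Next I would verify the hypotheses of Thm.~\ref{thm:RT2}: H\"older continuity of $x\mapsto\renfcn_x(t)$ is trivial since $\renfcn_x$ is independent of $x$; the family $\{\ee^{-\mdim t}|\renfcn_x(t)|\mid x\in\Sigma_A\}$ is the singleton $\{|z|\}$, so equi d.\,R.\,i.\ collapses to d.\,R.\,i.\ of $z$; and the bound $|z(t)|\le\mathfrak C'\ee^{st}$ for $t<0$ is assumed. Thus \ref{it:Lebesgueintegrable}--\ref{it:regular} hold and Thm.~\ref{thm:RT1} applies. Unwinding \eqref{eq:intro:renfcn} with $\ee^{S_n(\eta-\mdim\codefun)(y)}=p_{y_1}\cdots p_{y_n}$ and $S_n\codefun(y)=s_{y_1\cdots y_n}$ yields $N(t,x)=\ee^{\mdim t}Z(t)$.

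For the non-lattice and lattice cases I would need to match ``$\codefun$ (non-)lattice'' in the sense of Defn.~\ref{defn:lattice} with ``$\{s_1,\ldots,s_M\}$ (not) contained in a discrete subgroup of $\mathbb R$''. One direction is immediate with $\z\defeq\codefun$, $\psi\defeq 0$; for the converse, if $\codefun-\z=\psi-\psi\circ\sigma$ with $\z(\Sigma^{\mathbb N})\subseteq\aaa\mathbb Z$, then by the periodic-point characterisation of cohomology (Rem.~\ref{rmk:strictlypos}), evaluating at the $\sigma$-fixed point $iii\cdots$ forces $s_i=\z(iii\cdots)\in\aaa\mathbb Z$ for each $i\in\Sigma$. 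The same argument shows that maximality of $\aaa$ excludes cohomology to functions in any proper subgroup $n\aaa\mathbb Z$. Thm.~\ref{thm:RT1}\ref{it:RT1:nl} then delivers $N(t,x)\sim\ee^{t\mdim}G(x)$ with $G(x)=(\sum_i p_is_i)^{-1}\int_{-\infty}^{\infty}z(T)\,\textup{d}T$, and dividing by $\ee^{\mdim t}$ gives (i).

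In the lattice case, applying Thm.~\ref{thm:RT1}\ref{it:RT1:l} with $\z\defeq\codefun$ and $\psi\defeq 0$ collapses the formula for $\tilde G_x(t)$: plugging in $\renfcn_y(\aaa l+\aaa\{t/\aaa\})=\ee^{\mdim(\aaa l+\aaa\{t/\aaa\})}z(\aaa l+\aaa\{t/\aaa\})$, the factor $\ee^{-\aaa l\mdim}$ inside the sum and the factor $\ee^{-\aaa\{t/\aaa\}\mdim}$ outside cancel all the exponentials, leaving
\[
\tilde G_x(t) = \frac{\aaa}{\sum_i p_is_i}\sum_{l\in\mathbb Z}z\bigl(\aaa l+\aaa\{t/\aaa\}\bigr).
\]
Using $\aaa\{t/\aaa\}=t-\aaa\lfloor t/\aaa\rfloor$ and reindexing $l\mapsto l-\lfloor t/\aaa\rfloor$ rewrites this as $\sum_l z(\aaa l+t)$, giving (ii). Part (iii) follows at once from Thm.~\ref{thm:RT1}\ref{it:RT1:av} via $\ee^{-T\mdim}N(T,x)=Z(T)$. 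The main obstacle I anticipate is the lattice bookkeeping---matching the two notions of lattice, checking the maximality correspondence, and collapsing the involved formula for $\tilde G_x$ to the clean $\sum_l z(\aaa l+t)$; everything else is a mechanical unpacking of the setup already outlined in the paragraphs preceding the corollary.
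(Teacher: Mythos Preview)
Your proposal is correct and follows essentially the same route as the paper: the entire argument is already laid out in the paragraphs preceding Cor.~\ref{cor:Feller}, and you have faithfully reproduced it---full shift, $\codefun$ and $\eta$ constant on one-cylinders, $\codefunc=\mathbf 1$, $\renfcn_x$ independent of $x$, identification of $\eigenf_{\eta-\mdim\codefun}=\mathbf 1$ and $\nu_{\eta-\mdim\codefun}([i])=p_i$, and the fixed-point argument $iii\cdots$ to reduce the lattice condition on $\codefun$ to one on $\{s_1,\ldots,s_M\}$. Your additional explicit collapse of $\tilde G_x(t)$ in the lattice case is a welcome elaboration that the paper leaves implicit.
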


\begin{remark}
    In \cite[Ch.~XI]{Feller} the key renewal theorem is stated for functions $z$ which vanish on the negative half-axis. However, note in the above corollary exponential decay of $z$ on the negative half axis is allowed.
\end{remark}

\subsubsection{A renewal theorem for counting measures in symbolic dynamics}\label{sec:corlalley}

In \cite{Lalley} the case that $\eta$ is the constant zero-function in conjunction with $f_x\defeq\mathds{1}_{[0,\infty)}$ for every $x\in\Sigma_A$ is addressed. 
With these restrictions, Conditions
\ref{it:Lebesgueintegrable} and \ref{it:monotonic} are immediate and \ref{it:boundedC} as well as \ref{it:boundedneg} are shown in \cite[Lemma 8.1]{Lalley}. 
The renewal function from \eqref{eq:intro:renfcn} becomes
\begin{equation*}
\widetilde{N}(t,x)\defeq \sum_{n=0}^{\infty}\sum_{y:\sigma^ny=x}\codefunc(y)\mathds 1_{[0,\infty)}(t-S_n\codefun(y)),
\end{equation*}
which is a counting function. 
Thm.~\ref{thm:RT1} provides its asymptotic behaviour as $t\to\infty$, recovering \cite[Thms.~1 to 3]{Lalley} and yielding  an exact asymptotic in the lattice situtation. Since this is not explicitely given in \cite{Lalley}, we limit ourselves to the lattice case in the following corollary.

\begin{corollary}\label{cor:Lalley}
  For a fixed $\alpha\in(0,1)$, let $\codefun,\codefunc\in\mathcal F_{\alpha}(\Sigma_A,\mathbb R)$ be such that $\codefunc$ is non-negative but not identically zero and that there exists an $n\in\mathbb N$ for which $S_n\codefun$ is strictly positive. 
  Assume that $\codefun$ is lattice and let $\z,\psi\in\mathcal{C}(\Sigma_A)$ denote functions which satisfy $\codefun-\z=\psi-\psi\circ\sigma$,
  where $\z(\Sigma_A)\subseteq\aaa\mathbb Z$ for some $\aaa>0$. Suppose that $\codefun$ is not co-homologous to a function whose range lies in a proper subgroup of $\aaa\mathbb Z$. Uniformly for all $x\in\Sigma_A$ we have, as $t\to\infty$, that
  \begin{equation*}
    \widetilde{N}(t,x)\sim\frac{\aaa\eigenf_{-\mdim\z}(x)\int \codefunc(y)\ee^{\mdim\aaa\left\lfloor\frac{t}{\aaa}-\frac{\psi(y)-\psi(x)}{\aaa}\right\rfloor}\textup{d}\nu_{-\mdim\z}(y)}{(1-\ee^{-\mdim\aaa})\int\z\textup{d}\mu_{-\mdim\z}}.
  \end{equation*}
\end{corollary}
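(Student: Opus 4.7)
The plan is to derive Corollary \ref{cor:Lalley} as a direct specialisation of Thm.~\ref{thm:RT1}\ref{it:RT1:l}. First I would verify that all hypotheses are met in the present setting. Since $\eta\equiv 0$ and $f_x=\mathds 1_{[0,\infty)}$ is uniformly H\"older in $x$ (trivially, since it does not depend on $x$), \ref{it:Lebesgueintegrable} reduces to $\int_0^\infty e^{-t\delta}\,\mathrm dt<\infty$, which holds because $\delta>0$ (the existence of this $\delta$ is guaranteed by Prop.~\ref{thm:eigenvalueone} together with the assumption $S_n\xi>0$). Conditions \ref{it:boundedC} and \ref{it:boundedneg} are precisely the content of \cite[Lemma 8.1]{Lalley}, and \ref{it:monotonic} is immediate because $t\mapsto\mathds 1_{[0,\infty)}(t)$ is monotonic, giving \ref{it:regular}. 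Thus Thm.~\ref{thm:RT1}\ref{it:RT1:l} applies and yields $\widetilde N(t,x)\sim e^{t\delta}\tilde G_x(t)$ uniformly in $x\in\Sigma_A$.

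The remaining work is purely algebraic: rewrite $\tilde G_x(t)$ into the closed form stated in the corollary. With $f_y=\mathds 1_{[0,\infty)}$ the inner series in the definition of $\tilde G_x(t)$ becomes the geometric tail
\[
\sum_{l\ge l_0(y,t,x)}e^{-a l\delta}=\frac{e^{-a l_0(y,t,x)\delta}}{1-e^{-a\delta}},
\]
where $l_0(y,t,x)=\bigl\lceil \tfrac{\psi(y)}{a}-\bigl\{\tfrac{t+\psi(x)}{a}\bigr\}\bigr\rceil$ is the smallest integer $l$ for which the argument of the indicator is non-negative.

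The key identity to carry out is the conversion
\[
\tfrac{\psi(y)}{a}-\Bigl\{\tfrac{t+\psi(x)}{a}\Bigr\}
=\tfrac{\psi(y)-t-\psi(x)}{a}+\Bigl\lfloor\tfrac{t+\psi(x)}{a}\Bigr\rfloor,
\]
which, via $\lceil -x\rceil=-\lfloor x\rfloor$, gives
\[
l_0(y,t,x)=\Bigl\lfloor\tfrac{t+\psi(x)}{a}\Bigr\rfloor-\Bigl\lfloor\tfrac{t}{a}-\tfrac{\psi(y)-\psi(x)}{a}\Bigr\rfloor.
\]
Combining this with the prefactor $e^{-a\{(t+\psi(x))/a\}\delta}e^{\delta\psi(x)}=e^{-\delta t}e^{a\delta\lfloor(t+\psi(x))/a\rfloor}$ causes the terms $e^{a\delta\lfloor(t+\psi(x))/a\rfloor}$ to cancel out, and multiplying by $e^{t\delta}$ cancels the $e^{-\delta t}$, leaving precisely
\[
\frac{a\,h_{-\delta\zeta}(x)\int\chi(y)\,e^{\delta a\lfloor t/a-(\psi(y)-\psi(x))/a\rfloor}\,\mathrm d\nu_{-\delta\zeta}(y)}{(1-e^{-\delta a})\int\zeta\,\mathrm d\mu_{-\delta\zeta}}.
\]

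No genuine obstacle is expected; the only place requiring care is the floor/ceiling bookkeeping above and the verification that the geometric series truly starts at the integer $l_0$ and not $l_0+1$ (which hinges on the tight formulation $al+a\{(t+\psi(x))/a\}-\psi(y)\ge 0$). Once these identities are in place, one reads off the stated asymptotic from Thm.~\ref{thm:RT1}\ref{it:RT1:l}, noting that the uniformity in $x$ is inherited directly from that theorem.
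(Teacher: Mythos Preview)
Your proposal is correct and follows exactly the route the paper intends: the text preceding Cor.~\ref{cor:Lalley} already records that Conditions \ref{it:Lebesgueintegrable}--\ref{it:boundedneg} and \ref{it:monotonic} hold in Lalley's setting, so that Thm.~\ref{thm:RT1}\ref{it:RT1:l} applies, and the corollary is then obtained by simplifying $\tilde G_x(t)$; the paper does not spell out this simplification, whereas you carry it through via the floor/ceiling identities and the geometric-series summation, arriving at the stated closed form.
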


\subsubsection{A Markov renewal theorem}\label{sec:corMarkov}

If we assume that $\eta$ and $\codefun$ are constant on cylinder sets of length two, then the point process with interarrival times $W_0,W_1,\ldots$ becomes a Markov random walk: 
To see this, define $\tilde{\eta},\tilde{\codefun}\colon\Sigma_A^2\to\mathbb R$ by $\tilde{\eta}(ij)\defeq\eta(ij\om)$ and $\tilde{\codefun}(ij)\defeq\codefun(ij\om)$ for any $\om\in\Sigma_A$ for which $ij\om\in\Sigma_A$. Then
\begin{align*}
   \mathbb P(X_1=i\mid X_0X_{-1}\cdots=x)
   = \ee^{\eta(ix)}
   &= \ee^{\tilde{\eta}(ix_1)}
   =\mathbb P(X_1=i\mid X_0=x_1).
\end{align*}
Thus, $(X_n)_{n\in\mathbb Z}$ is a Markov chain.
Further, $W_n=\codefun(X_{n+1}X_nX_{n-1}\cdots) =\tilde{\codefun}(X_{n+1}X_n)$
implies that the interarrival times $W_0,W_1,\ldots$ are Markov dependent on $(X_n)_{n\in\mathbb Z}$.
Applying Thms.~\ref{thm:RT1} and \ref{thm:RT2} to such Markov random walks gives a \emph{Markov renewal theorem}. In order to state it in a common form we present several simplifications and conversions in the following.

Under the current assumptions, the analogue of the transition kernel $U$ from the introduction becomes a transition kernel $\tilde{U}\colon\Sigma\times(\mathcal{P}(\Sigma)\otimes\mathcal B(\mathbb R))\to\mathbb R$,
    \begin{align*}
    \tilde{U}(i,\{j\}\times(-\infty,t])
    \defeq&\ \mathbb P(X_{n+1}=j,W_n\leq t\mid X_n=i)\\
    =&\ \begin{cases}
    \mathds 1_{(-\infty,t]}(\tilde{\codefun}(ji))\ee^{\tilde{\eta}(ji)} &:\ ji\in\Sigma_A^2\\
    0 &:\ \text{otherwise}.
    \end{cases}
    \end{align*}
Set $\tilde{F}_{ij}(t)\defeq\tilde{U}(i,\{j\}\times(-\infty,t])$ and define $F\defeq(\tilde{F}_{ij})_{i,j\in\Sigma}$ to be the matrix with entries $F_{ij}\defeq\|\tilde{F}_{ij}\|_{\infty}=\exp(\tilde{\eta}(ji))\mathds 1_{\Sigma_A^2}(ji)$. Then, $F$ is primitive if and only if $A$ is primitive.
Moreover, $\tilde{F}_{ij}$ is a distribution function of a discrete measure. A distribution function is called \emph{lattice} if its set of discontinuities lies in a discrete subgroup of $\mathbb R$. Otherwise, it is called \emph{non-lattice}. Thus, $\tilde{F}_{ij}$ is lattice if and only if $\codefun$ is lattice.
For $s\in\mathbb R$  and $i,j\in \Sigma$ we have 
   \begin{align*}
   B_{ij}(s)
   &\defeq\int\ee^{s T}\tilde{F}_{ij}(\textup{d}T)
   =\begin{cases}
   \exp(\tilde{\eta}(ji)+s\tilde{\codefun}(ji))&:\ ji\in\Sigma_A^2\\
   0&:\ \text{otherwise}.
   \end{cases}
   \end{align*}
Setting $B(s)\defeq(B_{ij}(s))_{i,j\in\Sigma}$ we see that the action of $B(s)$ on vectors coincides with the action of the Ruelle-Perron-Frobenius operator $\PF_{\eta+s\codefun}$ on functions $g\colon\Sigma_A\to\mathbb R$ which are constant on cylinder sets of length one. That is, setting $\tilde{g}_i\defeq g(ix)$, for $x\in\Sigma_A$ with $ix\in\Sigma_A$, gives 
   \[
   \PF_{\eta+s\codefun}g(ix)
   =\sum_{j\in\Sigma,\,ji\in\Sigma_A^2}\ee^{\tilde{\eta}(ji)+s\tilde{\codefun}(ji)}\tilde{g}_j
   =\sum_{j\in\Sigma} B_{ij}(s)\tilde{g}_j
   =(B(s)\tilde{g})_i.
   \]
   By the Perron-Frobenius theorem for matrices there is a unique $s$ for which $B(s)$ has spectral radius one. By the above this value coincides with the unique $s$ for which $\PF_{\eta+s\codefun}$ has spectral radius one, which we denoted by $\mdim$ in Prop.~\ref{thm:eigenvalueone}. Similarly, $\eigenf_{\eta-\mdim\codefun}$ is constant on cylinder sets of length one. Thus, setting $\eigenf_i\defeq\eigenf_{\eta-\mdim\codefun}(ix)$ for $x\in\Sigma_A$ with $ix\in\Sigma_A$ we obtain a vector $(\eigenf_i)_{i\in\Sigma}$ with strictly positive entries which satisfies $B(-\mdim)\eigenf=\eigenf$, since
      \[
      (B(-\mdim)\eigenf)_i
      =\PF_{\eta-\mdim\codefun}\eigenf_{\eta-\mdim\codefun}(ix)
      =\eigenf_{\eta-\mdim\codefun}(ix)
      =\eigenf_i.
      \]
Moreover, the vector $\nu$ given by $\nu_i\defeq\nu_{\eta-\mdim\codefun}([i])$ satisfies $\nu_i>0$ for all $i\in\Sigma$ and $\nu B(-\mdim)=\nu$, since $\PF^{*}_{\eta-\mdim\codefun}\nu_{\eta-\mdim\codefun}=\nu_{\eta-\mdim\codefun}$. By the Perron-Frobenius theorem $h$ and $\nu$ are unique with these properties. 
Additionally assuming $\codefunc=\mathds 1_{\Sigma_A}$ and that $\renfcn_x$ only depends on the first letter of $x\in\Sigma_A$ it follows that $N(t,x)$ only depends on the first letter of $x$. Thus, for $i\in\Sigma$ write $N(t,i)\defeq N(t,ix)$ with $x\in\Sigma_A$ for which $ix\in\Sigma_A$. Now, the renewal equation becomes
\begin{align}
	\begin{aligned}\label{eq:Markovrenewal}
        N(t,i)
        &=\sum_{j\in\Sigma,\ ji\in\Sigma_A^2} N(t-\tilde{\codefun}(ji),j)\ee^{\tilde{\eta}(ji)} + \renfcn_i(t)\\
        &=\sum_{j\in\Sigma} \int_{-\infty}^{\infty}N(t- u,j)\tilde{F}_{ij}(\textup{d}u)+ \renfcn_i(t),
        \end{aligned}
\end{align}
for $i\in\Sigma$, where $\renfcn_i(t)\defeq \renfcn_{ix}(t)$ for $x\in\Sigma_A$ with $ix\in\Sigma_A$. The system of equations given in \eqref{eq:Markovrenewal} for varying $i\in\Sigma$ is called a \emph{Markov renewal equation}, \emph{multivariate renewal equation} or \emph{system of coupled renewal equations}.

\begin{corollary}[A Markov renewal theorem]
   Let $M\geq 2$ be an integer. For $i\in\{1,\ldots,M\}$ let $\renfcn_i\colon\mathbb R\to\mathbb R$ denote d.\,R.\,i.\ functions. Suppose that there exist $\mathfrak C',s>0$ such that $\ee^{-\mdim t}\lvert \renfcn_i(t)\rvert\leq\mathfrak C'\ee^{st}$ for $t<0$ and $i\in\{1,\ldots,M\}$. Let $\tilde{F}_{ij}(t)$ be as above and suppose that $F\defeq(\|\tilde{F}_{ij}\|_{\infty})_{i,j\in\Sigma}$ is primitive. Let $\mdim>0$ denote the unique positive real number for which the matrix $B(-\mdim)$ given by $B_{ij}(-\mdim)\defeq\int\ee^{-\mdim u}\tilde{F}_{ij}(\textup{d}u)$ has spectral radius one. Choose vectors $\nu$, $\eigenf$ with $\nu B(-\mdim)=\nu$, $B(-\mdim)\eigenf=\eigenf$ and $\nu_i,\eigenf_i>0$ for $i\in\Sigma$. Let $N(t,i)$ for $i\in\Sigma$ solve the Markov renewal equation \eqref{eq:Markovrenewal}.
   \begin{enumerate}
   \item If $\tilde{F}_{ij}$ is non-lattice, then
   \[
   \ee^{-\mdim t}N(t,i)
   \sim \frac{\eigenf_i\sum_{j=1}^M\nu_j\int\ee^{-\mdim T}\renfcn_j(T)\textup{d}T}{\sum_{k,j=1}^M\nu_k\eigenf_j\int T\ee^{-\mdim T}F_{kj}(\textup{d}T)}
   \eqdef G(i).
   \]
   \item We always have
   \[
   \lim_{t\to\infty} t^{-1}\int_{0}^t\ee^{-T\mdim}N(T,i)\textup{d}T=G(i).
   \]
   \end{enumerate}
\end{corollary}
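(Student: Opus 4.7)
The plan is to reduce the statement to Thm.~\ref{thm:RT2} via the symbolic encoding sketched in the paragraphs preceding the corollary. Concretely, I would define $\eta,\codefun\in\mathcal{F}_{\alpha}(\Sigma_A,\mathbb R)$ by $\eta(y)\defeq\tilde\eta(y_1y_2)$ and $\codefun(y)\defeq\tilde\codefun(y_1y_2)$, so that both are constant on cylinder sets of length two, set $\codefunc\defeq\mathds 1_{\Sigma_A}$, and put $\renfcn_y(t)\defeq\renfcn_{y_1}(t)$, which makes $x\mapsto\renfcn_x(t)$ trivially $\alpha$-H\"older continuous. Under this encoding the symbolic renewal function \eqref{eq:intro:renfcn} depends on $y$ only through $y_1$, and the equation it satisfies is precisely the Markov renewal equation \eqref{eq:Markovrenewal}; in particular the quantity $N(t,i)$ of the corollary coincides with $N(t,ix)$ from the general set-up for any $x\in\Sigma_A$ with $ix\in\Sigma_A$.

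Next I would verify the hypotheses of Thm.~\ref{thm:RT2}. Since $\{\renfcn_x\mid x\in\Sigma_A\}$ takes only the $M$ values $\renfcn_1,\ldots,\renfcn_M$, equi direct Riemann integrability of the family $\{t\mapsto\ee^{-t\mdim}\lvert\renfcn_x(t)\rvert\}_{x\in\Sigma_A}$ reduces at once to d.\,R.\,i.\ of the finitely many functions $t\mapsto\ee^{-t\mdim}\lvert\renfcn_i(t)\rvert$, which holds by hypothesis; the uniform exponential decay bound on the negative half-axis is inherited in the same way. For the non-lattice assertion I would invoke the equivalence recorded in the text just before the corollary, namely that $\tilde F_{ij}$ is lattice if and only if $\codefun$ is lattice, which permits the application of Thm.~\ref{thm:RT1}\ref{it:RT1:nl}; the uniformity in $x\in\Sigma_A$ there then yields the desired pointwise statement in $i\in\Sigma$.

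It remains to match the constant. The identifications $\eigenf_{\eta-\mdim\codefun}(ix)=\eigenf_i$ and $\nu_{\eta-\mdim\codefun}([j])=\nu_j$ are explained in the preceding discussion, since $\PF_{\eta-\mdim\codefun}$ acts on functions constant on length-one cylinders as multiplication by $B(-\mdim)$, and the Perron eigenvectors are unique up to a scaling which cancels in $G(i)$. The numerator of $G(x)$ in Thm.~\ref{thm:RT1}\ref{it:RT1:nl} then reduces immediately to $\eigenf_i\sum_j\nu_j\int\ee^{-\mdim T}\renfcn_j(T)\textup{d}T$. For the denominator $\int\codefun\,\textup{d}\mu_{\eta-\mdim\codefun}$ I would use $\textup{d}\mu_{\eta-\mdim\codefun}=\eigenf_{\eta-\mdim\codefun}\,\textup{d}\nu_{\eta-\mdim\codefun}$ together with the eigenmeasure identity $\int\PF_{\eta-\mdim\codefun}\psi\,\textup{d}\nu_{\eta-\mdim\codefun}=\int\psi\,\textup{d}\nu_{\eta-\mdim\codefun}$ applied to $\psi=\codefun\cdot\eigenf_{\eta-\mdim\codefun}$; exploiting that $\codefun,\eta,\eigenf_{\eta-\mdim\codefun}$ are all constant on cylinders of length two, together with the identity
\[
\tilde\codefun(jk)\ee^{\tilde\eta(jk)-\mdim\tilde\codefun(jk)}=\int T\ee^{-\mdim T}\tilde F_{kj}(\textup{d}T),
\]
a direct computation produces $\sum_{k,j}\nu_k\eigenf_j\int T\ee^{-\mdim T}\tilde F_{kj}(\textup{d}T)$, as claimed. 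Part~(ii) is the analogous specialisation of Thm.~\ref{thm:RT1}\ref{it:RT1:av}. The main obstacle is this last translation step between the symbolic integral against $\mu_{\eta-\mdim\codefun}$ and the matrix expression involving $\tilde F_{kj}$; the remainder of the argument is a transparent specialisation of the general theorem to potentials that are constant on two-cylinders.
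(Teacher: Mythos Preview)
Your proposal is correct and follows essentially the same route as the paper: the corollary is not given a separate proof there, and the entire Sec.~\ref{sec:corMarkov} discussion preceding it is precisely the reduction you describe---encoding $\eta,\codefun$ as locally constant potentials, identifying $B(-\mdim)$ with the action of $\PF_{\eta-\mdim\codefun}$ on functions constant on one-cylinders, and reading off $\eigenf_i,\nu_i$ from the Perron data. Your explicit verification of the denominator identity $\int\codefun\,\textup{d}\mu_{\eta-\mdim\codefun}=\sum_{k,j}\nu_k\eigenf_j\int T\ee^{-\mdim T}\tilde F_{kj}(\textup{d}T)$ via $\PF^*_{\eta-\mdim\codefun}\nu_{\eta-\mdim\codefun}=\nu_{\eta-\mdim\codefun}$ is the one step the paper leaves to the reader, and your computation is sound.

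One small caveat: you write that equi d.\,R.\,i.\ of $\{t\mapsto\ee^{-t\mdim}\lvert\renfcn_i(t)\rvert\}$ ``holds by hypothesis'', but the corollary as stated only assumes the $\renfcn_i$ themselves are d.\,R.\,i. This is almost certainly a slip in the paper's statement (compare the analogous Cor.~\ref{cor:Feller}, where the d.\,R.\,i.\ hypothesis is placed on $z(t)=\ee^{-\mdim t}\renfcn(t)$), not in your argument; just be aware that the hypothesis you actually need for Thm.~\ref{thm:RT2} is on $\ee^{-t\mdim}\lvert\renfcn_i(t)\rvert$.
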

A statement for the lattice situation can likewise be deduced from Thm.~\ref{thm:RT2}.
\begin{remark}
        The above theorem is presented in a similar form in \cite[Thm.~4{.}6]{Asmussen}. There, the matrix $F$ is required to be irreducible instead of primitive and the function $\tilde{\codefun}$, which $\tilde{F}_{ij}$ depends on, can be a random variable. On the other hand, each $\renfcn_i$ is required to be zero on the half-line $(-\infty,0)$ whereas here, we allow exponential decay on the negative axis.  More general versions of Markov renewal theorems can be found in the literature (see e.\,g.\ \cite{Alsmeyer}).
\end{remark}

\section{Proofs of the renewal theorems and their corollaries}\label{sec:proofs}
In Sec.~\ref{sec:PFanalyticp} we present essentials from complex Ruelle-Perron-Frobenius theory. These are used to prove the renewal theorems in Sec.~\ref{sec:proofsRT}.
\subsection{Analytic Properties of the Ruelle-Perron-Frobenius Operator}\label{sec:PFanalyticp}
We study analytic properties of the operator-valued function $z\mapsto(I-\mathcal{L}_{\eta+z\codefun})^{-1}$, where $\codefun,\eta\in\mathcal F_{\alpha}(\Sigma_A,\mathbb R)$ are fixed and $z\in\mathbb C$.
Below, we collect results from \cite{Pollicott_Ergodictheo,Lalley,ParryPollicott}.

Let $\mathcal B(\mathcal F_{\alpha}(\Sigma_A))$ denote the set of all bounded linear operators on $\mathcal F_{\alpha}(\Sigma_A)$ to $\mathcal F_{\alpha}(\Sigma_A)$. 
Since $\mathcal F_{\alpha}(\Sigma_A)$ endowed with $\|\cdot\|_{\alpha}\defeq\lvert\cdot\rvert_{\alpha}+\|\cdot\|_{\infty}$ is a Banach space, also $(\mathcal B(\mathcal F_{\alpha}(\Sigma_A)),\|\cdot\|_{\text{op}})$ is a Banach space, \cite[p.150]{Kato}. Here, \mbox{$\|\cdot\|_{\text{op}}$} denotes the operator norm, given by $\|B\|_{\text{op}}\defeq\sup_{g\in\mathcal F_{\alpha}(\Sigma_A),\;\|g\|_{\alpha}=1}\|Bg\|_{\alpha}$
for $B\in\mathcal B(\mathcal F_{\alpha}(\Sigma_A))$. 
A function $f\colon D\to\mathcal B(\mathcal F_{\alpha}(\Sigma_A))$ defined on an open domain $D\subset\mathbb C$ is called \emph{holomorphic} if, for all $z\in D$, there exists $l(z)\in\mathcal{B}(\mathcal F_{\alpha}(\Sigma_A))$ such that $\lim_{h\to 0}\|h^{-1}(f(z+h)-f(z))-l(z)\|_{\text{op}}=0$.
Following convention, we interchangeably use the terms holomorphic and \emph{analytic}. 
(For more insights, see \cite{Kato}, especially Ch.~III.3 and VII{.}1.)

For clarity we write $\rho\defeq\eta+z\codefun$ and sometimes $\rho(z)\defeq\eta+z\codefun$ if we want to stress dependence on $z$.
The Ruelle-Perron-Frobenius operator $\mathcal L_{\roz}$ is a bounded linear operator on the Banach space $(\mathcal F_{\alpha}(\Sigma_A),\|\cdot\|_{\alpha})$. 
We write $\textup{spec}(\PF_{\roz})\defeq\{\lambda\in\mathbb C\mid \mathcal L_{\roz}-\lambda I\ \text{is not invertible}\}$ for its \emph{spectrum} and $\text{spr}(\PF_{\roz})$ for its \emph{spectral radius}, i.\,e.\ the radius of the smallest closed disc centred at the origin which contains $\textup{spec}(\PF_{\roz})$.
By the \emph{spectral radius formula},   
\begin{equation}\label{thm:SpectralRadiusFormula}
	\lim_{n\to\infty}\|\PF_{\roz}^n\|_{\text{op}}^{1/n}=\text{spr}(\PF_{\roz}).
\end{equation}
For the following statement let $\Re(z)$ and $\Im(z)$ respectively denote the real and imaginary parts of $z\in\mathbb C$.
\begin{theorem}[\cite{Pollicott_Ergodictheo}]\label{thm:TheoremBLalley}
Let $\alpha\in(0,1)$ and $\codefun,\eta\in\mathcal F_{\alpha}(\Sigma_A)$. Suppose that $z\in\mathbb C\setminus \mathbb R$.
\begin{enumerate}
\item If for some $b\in\mathbb R$ the function $(\Im(z)\codefun-b)/(2\pi)$ is co-homologous to an integer-valued function, then $\ee^{\im b}\eigenv_{_{\eta+\Re(z)\codefun}}$ is a simple eigenvalue of $\PF_{\eta+z\codefun}$, and the rest of the spectrum is contained in a disc centred at zero of radius strictly less than $\eigenv_{_{\eta+\Re(z)\codefun}}$.
\item\label{it:lalleyBother} Otherwise, the entire spectrum of $\PF_{\eta+z\codefun}$ is contained in a disc centred at zero of radius strictly less than $\eigenv_{_{\eta+\Re(z)\codefun}}$.
\end{enumerate}
\end{theorem}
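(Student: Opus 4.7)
The plan is to reduce the complex operator $\PF_{\rho(z)}$, with $\rho(z) = \eta + z\codefun$, to the real positive operator $\PF_{\eta+\Re(z)\codefun}$, for which the Perron--Frobenius theory of Sec.~\ref{sec:RuellesPF} is available. As a preliminary observation, the triangle inequality together with $\lvert\ee^{\rho(z)(y)}\rvert = \ee^{\eta(y)+\Re(z)\codefun(y)}$ yields the pointwise domination $\lvert\PF_{\rho(z)}^n g(x)\rvert \leq \PF_{\eta+\Re(z)\codefun}^n\lvert g\rvert(x)$ for every $g\in\mathcal F_\alpha(\Sigma_A)$ and $n\in\mathbb N$, and the spectral radius formula \eqref{thm:SpectralRadiusFormula} then gives $\text{spr}(\PF_{\rho(z)}) \leq \eigenv_{\eta+\Re(z)\codefun}$. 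The content of the theorem is that this bound is attained in the spectrum precisely when the lattice-type condition of (i) holds.

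For part (i), suppose $(\Im(z)\codefun - b)/(2\pi) = n_0 + \psi - \psi\circ\sigma$ with continuous $n_0\colon\Sigma_A\to\mathbb Z$ and $\psi\in\mathcal F_\alpha(\Sigma_A,\mathbb R)$. The bounded invertible multiplication operator $M_\psi g \defeq \ee^{-2\pi\im\psi}g$ acts on $\mathcal F_\alpha(\Sigma_A)$, and using $\ee^{2\pi\im n_0(y)} = 1$ a direct computation produces the gauge equivalence
\begin{equation*}
M_\psi^{-1}\PF_{\rho(z)}M_\psi = \ee^{\im b}\PF_{\eta+\Re(z)\codefun}.
\end{equation*}
The classical Ruelle--Perron--Frobenius theorem (Sec.~\ref{sec:RuellesPF}) thus transports the simple leading eigenvalue $\eigenv_{\eta+\Re(z)\codefun}$ and its strict spectral gap to $\PF_{\rho(z)}$, producing the simple eigenvalue $\ee^{\im b}\eigenv_{\eta+\Re(z)\codefun}$ with eigenfunction $\ee^{-2\pi\im\psi}\eigenf_{\eta+\Re(z)\codefun}$ and the same gap for the rest of the spectrum.

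For part (ii), assume instead that some $\lambda\in\textup{spec}(\PF_{\rho(z)})$ saturates the bound, $\lvert\lambda\rvert = \eigenv_{\eta+\Re(z)\codefun}$; the goal is to recover the cohomology relation of (i). A Lasota--Yorke (Doeblin--Fortet) inequality on $(\mathcal F_\alpha(\Sigma_A),\|\cdot\|_\alpha)$, obtained for $\PF_{\rho(z)}$ by the same contraction-of-variation estimates used in the real case, yields quasi-compactness and promotes $\lambda$ to an eigenvalue with an eigenfunction $g\in\mathcal F_\alpha(\Sigma_A)$. Taking moduli in $\PF_{\rho(z)}g = \lambda g$ gives $\lvert g\rvert \leq \eigenv_{\eta+\Re(z)\codefun}^{-1}\PF_{\eta+\Re(z)\codefun}\lvert g\rvert$; pairing with the left eigenmeasure $\nu_{\eta+\Re(z)\codefun}$ forces equality pointwise, so $\lvert g\rvert$ is an eigenfunction of $\PF_{\eta+\Re(z)\codefun}$ to its leading eigenvalue, whence $\lvert g\rvert = c\cdot\eigenf_{\eta+\Re(z)\codefun}$ for some $c>0$ by the uniqueness statement of Sec.~\ref{sec:RuellesPF}. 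In particular $g$ never vanishes, so one may write $g = \lvert g\rvert\ee^{-2\pi\im\psi}$ with Hölder continuous $\psi$, and the equality case in the triangle inequality forces the summands $g(y)\ee^{\im\Im(z)\codefun(y)}$ with $\sigma y = x$ to share the argument $\arg(\lambda g(x))$. Unwinding this identity gives $(\Im(z)\codefun - b)/(2\pi) = n_0 + \psi - \psi\circ\sigma$ with $n_0\in\mathcal C(\Sigma_A,\mathbb Z)$, placing $z$ into case (i) and so contradicting the hypothesis of (ii).

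The main obstacle is the equality-case analysis underpinning part (ii): first, promoting the boundary spectral value $\lambda$ to an actual eigenvalue with Hölder eigenfunction via a Lasota--Yorke inequality for the complex operator; second, extracting a globally defined continuous argument $\psi$ of $g$. The latter rests on the non-vanishing of $g$, itself a consequence of the strict positivity of $\eigenf_{\eta+\Re(z)\codefun}$, combined with primitivity of $A$, which upgrades the pointwise common-argument condition at preimages into a genuine cohomology equation on all of $\Sigma_A$.
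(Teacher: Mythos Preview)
The paper does not prove this theorem at all; it is simply quoted from \cite{Pollicott_Ergodictheo} and used as a black box. Your argument is the standard one underlying that reference (and \cite{ParryPollicott}): conjugate by $\ee^{-2\pi\im\psi}$ in the lattice case, and in the non-lattice case use quasi-compactness together with the equality case of the triangle inequality to force a cohomology relation. It is essentially correct.

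Two small points worth tightening. First, in part~(i) you take $\psi\in\mathcal F_\alpha(\Sigma_A,\mathbb R)$, whereas Definition~\ref{defn:lattice} only provides $\psi\in\mathcal C(\Sigma_A)$; to have $M_\psi$ act on $\mathcal F_\alpha(\Sigma_A)$ you need H\"older regularity of $\psi$, which follows from Liv\v{s}ic-type rigidity (a coboundary between H\"older functions has a H\"older transfer function), but should be stated. Second, in part~(ii) the passage from a nowhere-vanishing $g$ to a \emph{real} continuous argument $\psi$ with $g/\lvert g\rvert=\ee^{-2\pi\im\psi}$ is a lifting problem for maps $\Sigma_A\to S^1$; it succeeds because $\Sigma_A$ is zero-dimensional (totally disconnected), so every continuous $S^1$-valued map lifts to $\mathbb R$. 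You flag both issues as obstacles, which is fair, but the resolutions are short and would complete the argument.
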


Below, we present useful results that follow from \cite{Lalley} and Thm.~\ref{thm:TheoremBLalley}. 
Note that in \cite{Lalley} only the special case that $\eta$ is the constant zero-function is covered.
However, the proofs work in the same way when $\eta\in\mathcal{F}_{\alpha}(\Sigma_A,\mathbb R)$ is arbitrary. Therefore, we omit the proofs and refer the reader to the respective proofs in \cite{Lalley}.

Results in regular perturbation theory \cite[Sec.~7{.}1 and 4{.}3]{Kato} imply that $z\mapsto\eigenv_{\rho(z)}$, $z\mapsto\eigenf_{\rho(z)}$ and $z\mapsto\nu_{\rho(z)}$ extend to holomorphic functions in a neighbourhood of $\mathbb R$ such that $\eigenv_{\rho(z)}\neq 0$, $\PF_{\rho(z)}\eigenf_{\rho(z)}=\eigenv_{\rho(z)}\eigenf_{\rho(z)}$, $\PF^*_{\rho(z)}\nu_{\rho(z)}=\eigenv_{\rho(z)}\nu_{\rho(z)}$ and $\nu_{\rho(z)}(\eigenf_{\rho(z)})=\nu_0(h_{\rho(z)})=1$ \cite[p.\,27]{Lalley}.

\begin{proposition}[{\cite[Props.~7{.}1 and 7{.}2]{Lalley}}]\label{prop:Lalley7.12}
	Fix $\alpha\in(0,1)$. Let $\codefun,\eta\in\mathcal F_{\alpha}(\Sigma_A,\mathbb R)$ and let $-\mdim$ be the unique real zero of $t\mapsto P(\eta+t\codefun)$. Then
\begin{enumerate}
	\item $z\mapsto(I-\PF_{\roz})^{-1}$ is holomorphic in the half-plane $\Re(z)<-\mdim$.
	\item\label{it:Lalley12_pole} $z\mapsto(I-\PF_{\roz})^{-1}$ has a simple pole at $z=-\mdim$ and for $\codefunc\in\mathcal F_{\alpha}(\Sigma_A)$,
	\begin{equation}\label{eq:LalLzweistrich}
		(I-\PF_{\roz})^{-1}\codefunc=\eigenv_{\roz}(1-\eigenv_{\roz})^{-1}\textstyle{\int} \codefunc\textup{d}\nu_{\roz}\cdot\eigenf_{\roz} + (I-\PF''_{\roz})^{-1}\codefunc,
	\end{equation}
	for $z$ in some punctured neighbourhood of $z=-\mdim$, where 
	\begin{align*}
		\PF''_{\roz}&\defeq\PF_{\roz}-\PF'_{\roz}\qquad\qquad\text{with}\\
		\PF'_{\roz}\codefunc&\defeq\eigenv_{\roz}\textstyle{\int} \codefunc\textup{d}\nu_{\roz}\cdot\eigenf_{\roz}.
	\end{align*}
	Moreover, $z\mapsto (I-\PF''_{\roz})^{-1}$ is a holomorphic operator-valued function in a neighbourhood of $z=-\mdim$.
    \end{enumerate}
\end{proposition}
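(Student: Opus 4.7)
For part~(i), the plan is to combine the strict monotonicity of $s\mapsto P(\eta+s\codefun)$ from Prop.~\ref{prop:pressureconvex} with the spectral radius bound of Thm.~\ref{thm:TheoremBLalley}. For $z\in\mathbb C$ with $\Re(z)<-\mdim$, monotonicity gives $\eigenv_{\eta+\Re(z)\codefun}=\exp P(\eta+\Re(z)\codefun)<\exp P(\eta-\mdim\codefun)=1$, while Thm.~\ref{thm:TheoremBLalley} bounds $\textup{spr}(\PF_{\roz})$ by this quantity when $z\notin\mathbb R$ (and Ruelle's theorem from Sec.~\ref{sec:RuellesPF} gives equality for real~$z$). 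Hence $I-\PF_{\roz}$ is invertible throughout $\{\Re z<-\mdim\}$. Holomorphy then follows because $z\mapsto\PF_{\roz}$ is operator-norm holomorphic---differentiating the finite sum in \eqref{eq:def:PF} termwise yields the bounded derivative $\codefunc\mapsto\PF_{\roz}(\codefun\cdot\codefunc)$, with remainder of order $\lvert h\rvert^2$ in operator norm---and the inversion map $B\mapsto(I-B)^{-1}$ is holomorphic on the open set $\{B\in\mathcal B(\mathcal F_{\alpha}(\Sigma_A)):1\notin\textup{spec}(B)\}$ by the standard resolvent identity.

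For part~(ii), the approach is a spectral decomposition of $\PF_{\roz}$ around $z=-\mdim$ based on regular perturbation theory. At $z=-\mdim$, Ruelle's theorem (Sec.~\ref{sec:RuellesPF}) implies that $\eigenv_{\eta-\mdim\codefun}=1$ is a simple isolated eigenvalue of $\PF_{\eta-\mdim\codefun}$ with the rest of the spectrum strictly inside the unit disc. By \cite[Ch.~VII.1]{Kato} this spectral gap persists in a complex neighbourhood $U$ of $-\mdim$, and the holomorphic extensions of $\eigenv_{\roz}$, $\eigenf_{\roz}$, $\nu_{\roz}$ (announced immediately before the statement) give rise to a holomorphic rank-one spectral projection $P_z\codefunc\defeq\int\codefunc\,\textup{d}\nu_{\roz}\cdot\eigenf_{\roz}$ satisfying $\PF_{\roz}P_z=\eigenv_{\roz}P_z=P_z\PF_{\roz}$. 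Setting $\PF'_{\roz}\defeq\eigenv_{\roz}P_z$ and $\PF''_{\roz}\defeq\PF_{\roz}-\PF'_{\roz}$ as in the statement, $\PF''_{\roz}$ vanishes on the range of $P_z$ and agrees with $\PF_{\roz}$ on the invariant complement $\ker P_z$ whose spectral radius stays strictly below~$1$ throughout~$U$. Hence $(I-\PF''_{\roz})^{-1}$ exists and is holomorphic on~$U$.

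It then remains to verify \eqref{eq:LalLzweistrich} and the order of the pole. Writing $I-\PF_{\roz}=(I-\PF''_{\roz})-\PF'_{\roz}$ and applying it to the proposed inverse, the identity $\PF''_{\roz}P_z=0$ (which implies $\PF'_{\roz}(I-\PF''_{\roz})^{-1}=\PF'_{\roz}$) collapses the calculation to $\eigenv_{\roz}P_z+I-\eigenv_{\roz}P_z=I$. The pole at $z=-\mdim$ is simple because $1-\eigenv_{\roz}$ has a simple zero there; indeed, restricted to the real axis $\frac{\textup{d}}{\textup{d}t}\eigenv_{\eta+t\codefun}\big|_{t=-\mdim}=\int\codefun\,\textup{d}\mu_{\eta-\mdim\codefun}$, which is strictly positive because the standing assumption $S_n\codefun>0$ on $\Sigma_A$ for some $n\in\mathbb N$ forces $n\int\codefun\,\textup{d}\mu=\int S_n\codefun\,\textup{d}\mu>0$ for every $\sigma$-invariant Borel probability measure $\mu$. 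I expect the main obstacle to be the perturbation-theoretic step in part~(ii): propagating the spectral gap of $\PF_{\eta-\mdim\codefun}$ into a complex neighbourhood and simultaneously producing the compatible holomorphic eigendata $\eigenv_{\roz}$, $\eigenf_{\roz}$, $\nu_{\roz}$, $P_z$. Once this ingredient is available, both the algebraic identity and the simple-pole assertion reduce to routine manipulations.
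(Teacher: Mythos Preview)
Your proposal is correct and follows exactly the route one would expect from the cited source: the paper itself gives no proof of this proposition, referring instead to \cite[Props.~7.1, 7.2]{Lalley} with the remark that the arguments carry over verbatim when the constant potential $\eta\equiv 0$ is replaced by an arbitrary $\eta\in\mathcal F_{\alpha}(\Sigma_A,\mathbb R)$. Your argument---spectral radius bound via Thm.~\ref{thm:TheoremBLalley} for part~(i), Kato-type analytic perturbation of the simple dominant eigenvalue for part~(ii), followed by the algebraic verification of \eqref{eq:LalLzweistrich} and the derivative computation $\tfrac{\textup{d}}{\textup{d}z}\eigenv_{\roz}\big|_{z=-\mdim}=\int\codefun\,\textup{d}\mu_{\eta-\mdim\codefun}>0$---is precisely this standard route, and your identification of the perturbation-theoretic step as the only substantive ingredient is accurate; the paper dispatches it in the paragraph preceding the proposition by invoking \cite[Secs.~7.1, 4.3]{Kato}.
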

The factor $\eigenv_{\roz}$ of the first summand of \eqref{eq:LalLzweistrich} is missing in \cite{Lalley}. However, the relevant $z$-value is $-\mdim$, and $\eigenv_{-\mdim}=1$.

We are interested in the residue of $z\mapsto(I-\PF_{\rho(z)})^{-1}$ at the simple pole $z=-\mdim$. For this, we use that the topological pressure function $t\mapsto P(\eta+t\codefun)$ is real-analytic for $t\in\mathbb R$ and real-valued $\codefun,\eta\in\mathcal F_{\alpha}(\Sigma_A,\mathbb R)$ and that it satisfies 
\begin{equation}\label{eq:analyticpressure}
	\frac{\text{d}}{\text{d}t}P(\eta+t\codefun)=\int \codefun\textup{d}\mu_{\eta+t\codefun},\quad t\in\mathbb R.
\end{equation}
The analyticity of $z\mapsto(I-\PF_{\rho(z)})^{-1}$ can be proved with methods of analytic perturbation theory as presented in \cite{Kato}. This method of proof is due to \cite{Ruelle_thermodynamic}.
Further, since $z\mapsto\eigenv_{\rho(z)}$ has an analytic continuation to a neighbourhood of $\mathbb R$ and $P(\codefun)=\log(\eigenv_{\codefun})$ for real-valued $\codefun\in\mathcal{C}(\Sigma_A,\mathbb R)$, we can extend $P$ analytically by setting $P(\rho(z))\defeq\log(\eigenv_{\rho(z)})$. `Formally this definition can only be made modulo $2\pi \im$ since $\log$ is multiple valued, although we shall ask that $P(\codefun)$ be real-valued when $\codefun$ is real-valued' \cite[p.\,31]{ParryPollicott}.
In this way, \eqref{eq:analyticpressure} extends to a neighbourhood of $\mathbb R$.
Combined with \eqref{eq:analyticpressure}, Prop.~\ref{prop:Lalley7.12}\ref{it:Lalley12_pole} yields the following corollary since $z\mapsto \eigenv_{\rho(z)}$, $z\mapsto \int \codefunc\textup{d}\nu_{\rho(z)}$ and $z\mapsto\eigenf_{\rho(z)}$ are continuous at $z=-\mdim$.

\begin{corollary}\label{cor:Lalley7.2}
	Let $\codefun\in\mathcal{C}(\Sigma_A,\mathbb R)$ and $\codefunc\in\mathcal F_{\alpha}(\Sigma_A,\mathbb R)$. Then, for $x\in\Sigma_A$, the residue of $(I-\PF_{\eta+z\codefun})^{-1}\codefunc(x)$ at $z=-\mdim$ is
\[
	-\frac{\int \codefunc\textup{d}\nu_{\eta-\mdim \codefun}}{\int \codefun\textup{d}\mu_{\eta-\mdim \codefun}}\eigenf_{\eta-\mdim \codefun}(x).
\]
\end{corollary}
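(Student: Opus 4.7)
The plan is to read off the residue directly from the explicit formula in Prop.~\ref{prop:Lalley7.12}\ref{it:Lalley12_pole}. There, in a punctured neighbourhood of $z=-\mdim$, we have the decomposition
\[
(I-\PF_{\roz})^{-1}\codefunc
=\eigenv_{\roz}(1-\eigenv_{\roz})^{-1}\int\codefunc\,\textup{d}\nu_{\roz}\cdot\eigenf_{\roz}+(I-\PF''_{\roz})^{-1}\codefunc.
\]
Since the second summand is holomorphic at $z=-\mdim$, it contributes nothing to the residue. In the first summand, the factors $z\mapsto\eigenv_{\roz}$, $z\mapsto\int\codefunc\,\textup{d}\nu_{\roz}$ and $z\mapsto\eigenf_{\roz}$ are holomorphic (hence continuous) at $z=-\mdim$ with limits $\eigenv_{\eta-\mdim\codefun}=1$, $\int\codefunc\,\textup{d}\nu_{\eta-\mdim\codefun}$ and $\eigenf_{\eta-\mdim\codefun}(x)$, respectively. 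Therefore only the scalar factor $(1-\eigenv_{\roz})^{-1}$ produces the simple pole, and the residue of the product is obtained by evaluating the regular factors at $z=-\mdim$ and multiplying by the residue of $(1-\eigenv_{\roz})^{-1}$.

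The main (and only nontrivial) step is computing $\operatorname{Res}_{z=-\mdim}(1-\eigenv_{\roz})^{-1}$. Because $\eigenv_{\eta-\mdim\codefun}=1$ and $z\mapsto\eigenv_{\roz}$ is holomorphic, a first-order Taylor expansion gives
\[
1-\eigenv_{\roz}=-\left.\tfrac{\textup{d}}{\textup{d}z}\eigenv_{\roz}\right|_{z=-\mdim}(z+\mdim)+O((z+\mdim)^{2}),
\]
so the residue in question equals $-1\big/\tfrac{\textup{d}}{\textup{d}z}\eigenv_{\roz}\big|_{z=-\mdim}$, provided this derivative is nonzero. To evaluate it, I use the analytic continuation $P(\roz)=\log\eigenv_{\roz}$ of the pressure function discussed just before the statement, giving $\tfrac{\textup{d}}{\textup{d}z}\eigenv_{\roz}=\eigenv_{\roz}\cdot\tfrac{\textup{d}}{\textup{d}z}P(\roz)$. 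At $z=-\mdim$, the prefactor is $1$, and the extension of \eqref{eq:analyticpressure} to a neighbourhood of $\mathbb R$ yields
\[
\left.\tfrac{\textup{d}}{\textup{d}z}P(\roz)\right|_{z=-\mdim}=\int\codefun\,\textup{d}\mu_{\eta-\mdim\codefun}.
\]
By Prop.~\ref{prop:pressureconvex} this quantity is strictly positive (as $s\mapsto P(\eta+s\codefun)$ is strictly increasing), so the pole is genuinely simple and $\operatorname{Res}_{z=-\mdim}(1-\eigenv_{\roz})^{-1}=-\bigl(\int\codefun\,\textup{d}\mu_{\eta-\mdim\codefun}\bigr)^{-1}$.

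Combining these ingredients, the residue of $(I-\PF_{\roz})^{-1}\codefunc(x)$ at $z=-\mdim$ equals
\[
1\cdot\Bigl(-\tfrac{1}{\int\codefun\,\textup{d}\mu_{\eta-\mdim\codefun}}\Bigr)\cdot\int\codefunc\,\textup{d}\nu_{\eta-\mdim\codefun}\cdot\eigenf_{\eta-\mdim\codefun}(x)=-\frac{\int\codefunc\,\textup{d}\nu_{\eta-\mdim\codefun}}{\int\codefun\,\textup{d}\mu_{\eta-\mdim\codefun}}\,\eigenf_{\eta-\mdim\codefun}(x),
\]
as claimed. The only delicate point is justifying that \eqref{eq:analyticpressure} genuinely extends to a complex neighbourhood of $-\mdim$ so that $\tfrac{\textup{d}}{\textup{d}z}\eigenv_{\roz}$ may be identified with $\int\codefun\,\textup{d}\mu_{\eta-\mdim\codefun}$ at $z=-\mdim$; but this is exactly the content of the analytic-perturbation discussion preceding the corollary (with the branch of $\log$ normalised so that $P(\codefun)$ is real when $\codefun$ is real). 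Everything else is formal manipulation of Prop.~\ref{prop:Lalley7.12}.
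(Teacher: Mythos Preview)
Your proof is correct and follows essentially the same route as the paper: both use the decomposition from Prop.~\ref{prop:Lalley7.12}\ref{it:Lalley12_pole}, the continuity of $z\mapsto\eigenv_{\roz}$, $z\mapsto\int\codefunc\,\textup{d}\nu_{\roz}$ and $z\mapsto\eigenf_{\roz}$ at $z=-\mdim$, and the pressure-derivative formula \eqref{eq:analyticpressure} to identify the residue of $(1-\eigenv_{\roz})^{-1}$. You have simply spelled out the Taylor-expansion step and the nonvanishing of $\int\codefun\,\textup{d}\mu_{\eta-\mdim\codefun}$ more explicitly than the paper does.
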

The residue is given in \cite[p.27]{Lalley}; however, with a different sign.

\begin{proposition}[{\cite[Prop.~7{.}3]{Lalley}}]\label{prop:Lalley7.3}
	If $\codefun$ is non-lattice then $z\mapsto(I-\PF_{\eta+z\codefun})^{-1}$ is holomorphic in a neighbourhood of every $z$ on the line $\Re(z)=-\mdim$ except for $z=-\mdim$.
\end{proposition}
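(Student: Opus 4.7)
The plan is to reduce analyticity at a point $z_0 = -\mdim + \im t_0$ with $t_0 \neq 0$ to invertibility of $I - \PF_{\rho(z_0)}$ on $\mathcal{F}_{\alpha}(\Sigma_A)$, and then to propagate local analyticity by a Neumann-series argument. Concretely, once $I - \PF_{\rho(z_0)}$ has a bounded inverse, the expansion
\[
(I - \PF_{\rho(z)})^{-1} = (I - \PF_{\rho(z_0)})^{-1}\sum_{n=0}^{\infty}\bigl[(\PF_{\rho(z)} - \PF_{\rho(z_0)})(I - \PF_{\rho(z_0)})^{-1}\bigr]^n
\]
converges for $z$ close enough to $z_0$, since $z \mapsto \PF_{\rho(z)}$ is entire as a $\mathcal{B}(\mathcal F_\alpha(\Sigma_A))$-valued map: the coefficients $\ee^{z\codefun(y)}$ are entire and the defining finite sum (preimages of $\sigma$ have cardinality at most $\#\Sigma$) depends analytically on $z$ uniformly on compacta. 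Consequently $z \mapsto (I - \PF_{\rho(z)})^{-1}$ is holomorphic on a neighbourhood of $z_0$.

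Invertibility at $z_0$ amounts to $1 \notin \textup{spec}(\PF_{\rho(z_0)})$, which I would extract from Thm.~\ref{thm:TheoremBLalley} using $\eigenv_{\eta+\Re(z_0)\codefun} = \eigenv_{\eta-\mdim\codefun} = 1$. If case \ref{it:lalleyBother} of that theorem applies to $z_0$, the entire spectrum lies in a disc of radius strictly less than $1$ and $1$ is immediately excluded. In the alternative case there exists $b \in \mathbb R$ for which $\ee^{\im b}$ is a simple eigenvalue of $\PF_{\rho(z_0)}$ and the rest of the spectrum lies strictly inside the unit disc; then $1 \in \textup{spec}(\PF_{\rho(z_0)})$ is possible only when $\ee^{\im b} = 1$, i.e.\ when $b \in 2\pi\mathbb Z$.

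The remaining and principal step is to rule out $b \in 2\pi\mathbb Z$ under the non-lattice hypothesis on $\codefun$. Assume for contradiction that $b = 2\pi k$ with $k \in \mathbb Z$; the hypothesis of the alternative case provides $\psi \in \mathcal C(\Sigma_A)$ and an integer-valued $m \colon \Sigma_A \to \mathbb Z$ with
\[
\frac{t_0\codefun - 2\pi k}{2\pi} = \psi - \psi\circ\sigma + m.
\]
Multiplying through by $2\pi/t_0$ and rearranging yields
\[
\codefun - \tfrac{2\pi}{t_0}(m+k) = \tilde\psi - \tilde\psi\circ\sigma, \qquad \tilde\psi \defeq \tfrac{2\pi}{t_0}\psi,
\]
so $\codefun$ is co-homologous to a function whose range lies in the discrete subgroup $(2\pi/t_0)\mathbb Z$ of $\mathbb R$. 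By Defn.~\ref{defn:lattice} this forces $\codefun$ to be lattice, contradicting the standing hypothesis; hence $b \notin 2\pi\mathbb Z$, as needed. The main obstacle is essentially this last passage, namely translating Pollicott's integer-valued cohomology criterion into the cohomological definition of lattice; the spectral dichotomy is supplied ready-made by Thm.~\ref{thm:TheoremBLalley}, and the step from invertibility at $z_0$ to analyticity on a neighbourhood is standard perturbation theory.
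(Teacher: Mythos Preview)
Your argument is correct and follows the natural route: apply Thm.~\ref{thm:TheoremBLalley} at $z_0=-\mdim+\im t_0$ (where $\eigenv_{\eta-\mdim\codefun}=1$) to exclude $1$ from $\textup{spec}(\PF_{\rho(z_0)})$, using the non-lattice hypothesis to rule out $b\in 2\pi\mathbb Z$ in the first alternative, and then pass from invertibility at $z_0$ to local holomorphy via a Neumann series together with the analyticity of $z\mapsto\PF_{\rho(z)}$. The paper does not give its own proof of this proposition but cites \cite[Prop.~7.3]{Lalley} and remarks that the argument carries over verbatim for general $\eta$; Lalley's proof proceeds in essentially the same way you do, so your proposal agrees with the intended approach.
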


\begin{proposition}[{\cite[Prop.~7{.}4]{Lalley}}]\label{prop:Lalley7.4}
	If $\codefun$ is integer-valued but not co-homologous to any function valued in a proper subgroup of the integers, then $z\mapsto(I-\PF_{\eta+z\codefun})^{-1}$ is $2\pi\im$-periodic, and holomorphic at every $z$ on the line $\Re(z)=-\mdim$ such that $\Im(z)/(2\pi)$ is not an integer.
\end{proposition}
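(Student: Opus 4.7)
The plan is to split the argument into two parts: $2\pi\im$-periodicity, and holomorphy on the critical line away from $-\mdim + 2\pi\im\mathbb Z$.

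For periodicity, I would simply observe that $\codefun(y) \in \mathbb Z$ implies $\ee^{2\pi\im\codefun(y)} = 1$, so $\ee^{(z+2\pi\im)\codefun(y)} = \ee^{z\codefun(y)}$. Inserting this into the definition~\eqref{eq:def:PF} yields $\PF_{\eta+(z+2\pi\im)\codefun} = \PF_{\eta+z\codefun}$ as operators on $\mathcal{F}_{\alpha}(\Sigma_A)$, and the resolvent inherits the periodicity.

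For holomorphy at a given $z_0 \defeq -\mdim + \im t$ with $t/(2\pi) \notin \mathbb Z$, I would first note that $z \mapsto \PF_{\eta+z\codefun}$ is entire as an operator-valued map into $\mathcal{B}(\mathcal F_{\alpha}(\Sigma_A))$ (cf.\ Sec.~\ref{sec:PFanalyticp}). Standard perturbation theory then reduces the task to showing $1 \notin \textup{spec}(\PF_{\eta+z_0\codefun})$. I would apply Thm.~\ref{thm:TheoremBLalley} at $z = z_0$: in case~\ref{it:lalleyBother} the whole spectrum already lies in an open disc of radius strictly less than $\eigenv_{\eta-\mdim\codefun} = 1$ and the conclusion is immediate; in the remaining case the only spectral value of modulus $1$ is the simple eigenvalue $\ee^{\im b}$ corresponding to the $b \in \mathbb R$ for which $(t\codefun-b)/(2\pi)$ is co-homologous to an integer-valued function, so it remains to rule out $\ee^{\im b} = 1$.

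I would exclude this by contradiction. Assuming $b = 2\pi k$ for some $k \in \mathbb Z$, the function $t\codefun/(2\pi)$ is co-homologous to an integer-valued function $m$, and so $\codefun$ is co-homologous to $\zeta \defeq (2\pi/t)m$. Rem.~\ref{rmk:strictlypos} then gives $S_n\codefun(x) = S_n\zeta(x) \in (2\pi/t)\mathbb Z$ for every periodic $x$ of period $n$; the integer-valuedness of $\codefun$ forces these Birkhoff sums to lie in $H \defeq (2\pi/t)\mathbb Z \cap \mathbb Z$. Under the standing assumption $t/(2\pi) \notin \mathbb Z$, $H$ is a proper subgroup of $\mathbb Z$ --- equal to $\{0\}$ if $2\pi/t$ is irrational, and to $\lvert p\rvert\mathbb Z$ with $\lvert p\rvert \geq 2$ if $2\pi/t = p/q$ in lowest terms. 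Invoking the Livschitz-type converse to Rem.~\ref{rmk:strictlypos} --- standard for H\"older functions on a subshift of finite type --- I would then produce a function cohomologous to $\codefun$ with range in $H$, contradicting the hypothesis on $\codefun$. The hard part will be invoking this Livschitz converse cleanly: the precise statement is not recorded earlier in the paper, and I would need to verify that the period group generated by $\{S_n\codefun(x) : \sigma^n x = x\}$ really coincides with the smallest subgroup of $\mathbb Z$ into which $\codefun$ can be pushed by a coboundary adjustment.
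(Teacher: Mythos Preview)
The paper does not supply its own proof here; the proposition is simply quoted from \cite[Prop.~7{.}4]{Lalley}, and the surrounding text says explicitly ``we omit the proofs and refer the reader to the respective proofs in \cite{Lalley}.'' So there is no in-paper argument to compare against.

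Your outline is correct and is the standard route. Periodicity is immediate from $\ee^{2\pi\im\codefun}=1$. For holomorphy at $z_0=-\mdim+\im t$ with $t/(2\pi)\notin\mathbb Z$, reducing via Thm.~\ref{thm:TheoremBLalley} to ruling out $\ee^{\im b}=1$ is the right move, and your identification of $H=(2\pi/t)\mathbb Z\cap\mathbb Z$ as a proper subgroup of $\mathbb Z$ under the hypothesis $t/(2\pi)\notin\mathbb Z$ is accurate in both the irrational and rational cases. The step you flag as delicate --- passing from ``$S_n\codefun(x)\in H$ for all periodic $x$'' to ``$\codefun$ is co-homologous to an $H$-valued function'' --- is indeed not recorded verbatim in the present paper, but it is a standard consequence of the Liv\v{s}ic theorem for H\"older potentials on mixing subshifts of finite type (see e.g.\ \cite{ParryPollicott}). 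One clean way to see it: writing $H=a\mathbb Z$, the assumption forces all periodic-orbit products of $\ee^{2\pi\im\codefun/a}$ to equal $1$; the multiplicative Liv\v{s}ic theorem then yields a continuous $\phi\colon\Sigma_A\to S^1$ with $\ee^{2\pi\im\codefun/a}=\phi/(\phi\circ\sigma)$, and since $\Sigma_A$ is totally disconnected one may lift $\phi$ to a continuous real $\psi$, giving $\codefun-a(\psi-\psi\circ\sigma)\in a\mathbb Z$. This closes the gap you identified.
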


\subsection{Proof of the renewal theorems}\label{sec:proofsRT}

\subsubsection{Proof of the Renewal Thm.~\ref{thm:RT1}}
As the statement of Thm.~\ref{thm:RT1} suggests, we need to distinguish between the cases of $\codefun$ being lattice or non-lattice. We start with the non-lattice situation, for which we use a useful smoothing argument for showing the desired asymptotic, see \cite[ pp.\,29 ff.]{Lalley}. 
For a probability density $\prob\colon\mathbb R\to\mathbb R$ we consider its Fourier-Laplace transform given by 
\[
	\hat{\prob}(\im\theta)\defeq\int_{-\infty}^{\infty}\ee^{\im\theta t}\prob(t)\textup{d}t
\]
and introduce the following class of probability densities. 
\begin{align*}
	&\mathfrak{P}\defeq \{\prob\colon\mathbb R\to\mathbb R\mid \prob\ \text{is a probability density},   \prob(t)=\prob(-t)\ \text{for}\ t\in\mathbb R.\\
	&\hphantom{\mathfrak{P}\defeq \{\prob\colon\mathbb R\to\mathbb R\mid\ }\hat{\prob}(\im\theta)\ \text{is non-negative, $\mathcal{C}^{\infty}$ and has compact support}\}
\end{align*}
Note that the function $\hat{\prob}\colon\mathbb R\to\mathbb C$ given by 
\begin{equation}\label{eq:prob1}
	\hat{\prob}(\im\theta)\defeq
	\begin{cases}
		\exp\left(\frac{-\theta^2}{1-\theta^2}\right) & \colon\lvert\theta\rvert\leq 1,\\
		0 & \colon\text{otherwise}
	\end{cases}
\end{equation}
defines an even probability density $\prob\colon\mathbb R\to\mathbb R$ which lies in $\mathfrak{P}$. That $\prob$ is a probability density is due to Bochner's theorem, see e.\,g.\ \cite[Satz 15{.}29]{Klenke}. 
Thus, $\mathfrak{P}\neq\emptyset$. 
For the following, fix $\prob$\index{zzpi@$\prob$} as such. As $\prob$ is an even probability density we know that for all $\e>0$ there exists $\tau>0$ such that
\[
	\int_{-\tau}^{\tau}\prob(t)\textup{d}t\geq 1-\e.
\]
For each $\e>0$ fix such a $\tau=\tau(\e)$.
Thus, $\prob_{\e}$ which for $\e>0$ is defined by 
\begin{equation}
	\prob_{\e}(t)\defeq\tfrac{\tau(\e)}{\e}\prob\left(t\tfrac{\tau(\e)}{\e}\right)
\end{equation}
satisfies
\[
	\int_{-\e}^{\e} \prob_{\e}(t)\textup{d}t
	=\int_{-\tau(\e)}^{\tau(\e)}\prob(t)\textup{d}t
	\geq 1-\e.
\]
Moreover, it can be verified that $\prob_{\e}\in\mathfrak{P}$ for all $\e>0$. The smoothing argument is as follows.

\begin{lemma}\label{lem:suffices}
  If for each sufficiently small $\e>0$
  \begin{equation}\label{eq:smoothing}
        \lim_{r\to\infty}\int_{-\infty}^{\infty}\prob_{\e}(r-T)\ee^{-T\mdim}N(T,x)\textup{d}T=G(x)
  \end{equation}
  uniformly for $x\in\Sigma_A$, then the statement of Thm.~\ref{thm:RT1}\ref{it:RT1:nl} holds.
\end{lemma}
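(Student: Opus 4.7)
The plan is to run the standard smoothing/Tauberian argument from renewal theory, adapted to the family of functions indexed by $x \in \Sigma_A$. Let $J(r,x) \defeq \ee^{-r\mdim} N(r,x)$, so the hypothesis reads $J_\e(r,x) \defeq \int \prob_\e(r-T) J(T,x)\,\textup{d}T \to G(x)$ uniformly in $x$ as $r\to\infty$, while the goal is $J(r,x) \to G(x)$ uniformly. Since $\prob_\e$ is a probability density, one has the trivial identity
\begin{equation*}
J(r,x) - J_\e(r,x) = \int_{-\infty}^{\infty} \prob_\e(u)\bigl(J(r,x)-J(r-u,x)\bigr)\,\textup{d}u,
\end{equation*}
and the job is to bound the right-hand side by something that vanishes in the order ``$r\to\infty$ first, then $\e\to 0$''.

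I would split the integral at $|u|=\e$. The tail contribution $\int_{|u|>\e}\prob_\e(u)\bigl(J(r,x)-J(r-u,x)\bigr)\textup{d}u$ is controlled directly by regularity condition \ref{it:boundedC}: since $|J(t,x)|\leq\mathfrak C$ for all $t\in\mathbb R$ and $x\in\Sigma_A$, and $\int_{|u|>\e}\prob_\e(u)\,\textup{d}u\leq \e$ by construction of $\prob_\e$, the tail is bounded uniformly by $2\mathfrak C\e$. The central contribution is squeezed by
\begin{equation*}
m_\e(r,x)\cdot\mu_\e \;\leq\; \int_{-\e}^{\e}\prob_\e(u)J(r-u,x)\,\textup{d}u\;\leq\; M_\e(r,x)\cdot\mu_\e,
\end{equation*}
where $m_\e, M_\e$ are the infimum and supremum of $J(\cdot,x)$ on $[r-\e,r+\e]$ and $\mu_\e=\int_{-\e}^{\e}\prob_\e(u)\,\textup{d}u\in[1-\e,1]$. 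Combining with the tail bound and the hypothesis $J_\e(r,x)\to G(x)$, taking $\limsup_{r\to\infty}$ then $\liminf_{\e\to 0}$ gives $G(x)\leq \liminf_{\e\to 0}\limsup_r M_\e(r,x)$, and symmetrically $G(x)\geq \limsup_{\e\to 0}\liminf_r m_\e(r,x)$.

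At this point the non-oscillation condition \ref{it:regular} enters. Under \ref{it:nonoscill}, those two iterated limits coincide with a common value $L(x)$, forcing $L(x)=G(x)$; sandwiching $\liminf_r J(r,x)$ and $\limsup_r J(r,x)$ between the inf- and sup-versions then yields $\lim_r J(r,x)=G(x)$. Under \ref{it:monotonic}, monotonicity of each $\renfcn_x$ directly controls the oscillation of the partial sums that build $N(t,x)$ over a window of width $\e$ (as in \cite{Lalley}), and \ref{it:nldRi} reduces to this via the direct Riemann integrability machinery. The main obstacle is establishing that all of these steps can be carried out uniformly in $x\in\Sigma_A$: the tail estimate is automatic from the uniform constant $\mathfrak C$ in \ref{it:boundedC}, but the non-oscillation estimate needs to be upgraded to a uniform statement, which will require invoking the H\"older regularity of $x\mapsto \renfcn_x(t)$ together with compactness of $\Sigma_A$ to pass from pointwise to uniform control of $M_\e(r,x)-m_\e(r,x)$.
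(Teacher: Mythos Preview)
Your treatment of cases \ref{it:monotonic} and \ref{it:nonoscill} is essentially the paper's argument: squeeze the central contribution between $\inf$ and $\sup$ of $J(\cdot,x)$ over a short window, bound the tail by $2\mathfrak C\e$ via \ref{it:boundedC}, and invoke the non-oscillation hypothesis (or Lalley's monotonicity argument) to close the sandwich. That part is fine.

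The genuine gap is case \ref{it:nldRi}. Your sentence ``\ref{it:nldRi} reduces to this via the direct Riemann integrability machinery'' skips the entire content of the proof. Equi d.R.i.\ of $\{t\mapsto \ee^{-t\mdim}|\renfcn_x(t)|\}$ controls the oscillation of the \emph{individual} summands $\renfcnn_y$, not the oscillation of $J(r,x)=\ee^{-r\mdim}N(r,x)$, which is an infinite sum $\sum_{n}\sum_{y:\sigma^n y=x}\codefunc(y)\renfcnn_y(r-S_n\codefun(y))\ee^{S_n(\eta-\mdim\codefun)(y)}$. To pass from one to the other the paper (i) bounds $|J(r,x)-J_\e(r,x)|$ by $\sum_n\sum_y \codefunc(y)\ee^{S_n(\eta-\mdim\codefun)(y)}d^{2\e}(r-S_n\codefun(y))$ with $d^{2\e}$ the uniform oscillation of $\renfcnn_y$ over windows of width $2\e$; (ii) replaces $\ee^{S_n(\eta-\mdim\codefun)(y)}$ by $\mu_{\eta-\mdim\codefun}([y\vert_n])$ via the Gibbs property; (iii) shows by a pigeonhole/averaging argument, exploiting $S_n\codefun\geq\kappa>0$, that for some shift $q$ the weighted sum $\sum_k d^{2\e}(2\e k)\,a_{\e,k+q}(r,x)$ is bounded by $(2/\kappa)\cdot 2\e\sum_k d^{2\e}(2\e k)$, which vanishes as $\e\to 0$ by equi d.R.i. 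None of this is a routine reduction, and your proposal shows no awareness of why controlling $\sup_x(M_\e(r,x)-m_\e(r,x))$ is hard here.

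Finally, your closing remark that uniformity in $x$ is the ``main obstacle'' and should be obtained from H\"older regularity of $x\mapsto\renfcn_x(t)$ plus compactness is a red herring. The paper never uses that regularity for this lemma; uniformity falls out automatically from the uniform constants in \ref{it:boundedC} and the Gibbs estimate. The real obstacle is the passage from oscillation of $\renfcnn_y$ to oscillation of $N$, and that is where the work lies.
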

\begin{proof}
	In the proof we distinguish between the different cases of \ref{it:regular}. 
	
	\emph{Case \ref{it:monotonic}}: Here, $N(t,x)$ is a monotonic function in $t$ and the statement follows from the proof of \cite[Lemma 8.2]{Lalley}. 
	
	\emph{Case \ref{it:nonoscill}}:
   For $r\in\mathbb R$ and $\e>0$, Condition \ref{it:boundedC} implies that
   \begin{align}\label{eq:A1}
        \left\lvert
	\int_{-\infty}^{\infty}\prob_{\e}(r-T)\ee^{-T\mdim}N(T,x)\textup{d}T - \int_{r-\e}^{r+\e}\prob_{\e}(r-T)\ee^{-T\mdim}N(T,x)\textup{d}T
	\right\rvert
\leq\mathfrak C\e,
   \end{align}
   which tends to 0 as $\e\to 0$ uniformly for $x\in\Sigma_A$. 
   Moreover, we observe that 
   \begin{align}
        \inf_{\tilde{\e}\in[0,2\e)}\ee^{-(r-\tilde{\e})\mdim}N(r-\tilde{\e},x)(1-\e)
	&\leq \int_{r-2\e}^{r}\prob_{\e}(r-\e-T)\ee^{-T\mdim}N(T,x)\textup{d}T\nonumber\\
	&\leq\sup_{\tilde{\e}\in[0,2\e)}\ee^{-(r-\tilde{\e})\mdim}N(r-\tilde{\e},x).\label{eq:infsup}
   \end{align}
   These observations imply that
   \begin{align*}
     G(x)
     \stackrel{\eqref{eq:smoothing}}{=}& \lim_{r\to\infty}\int_{-\infty}^{\infty}\prob_{\e}(r-T)\ee^{-T\mdim}N(T,x)\textup{d}T\\
     =\ \,&\liminf_{\e\searrow 0}\limsup_{r\to\infty}\int_{-\infty}^{\infty}\prob_{\e}(r-T)\ee^{-T\mdim}N(T,x)\textup{d}T\\
     \stackrel{\eqref{eq:A1}}{\leq}&\liminf_{\e\searrow 0}\limsup_{r\to\infty}\int_{r-2\e}^{r}\prob_{\e}(r-\e-T)\ee^{-T\mdim}N(T,x)\textup{d}T\\
     \stackrel{\eqref{eq:infsup}}{\leq}&\liminf_{\e\searrow 0}\limsup_{r\to\infty}\sup_{\tilde{\e}\in[0,2\e)}\ee^{-(r-\tilde{\e})\mdim}N(r-\tilde{\e},x)
   \end{align*}
   and likewise that
   \begin{align*}
     G(x)
     &\geq \limsup_{\e\searrow 0}\liminf_{r\to\infty}\inf_{\tilde{\e}\in[0,2\e)}\ee^{-(r-\tilde{\e})\mdim}N(r-\tilde{\e},x).
   \end{align*}
   Using \ref{it:nonoscill} and the inequalities 
   \begin{align*}
        \inf_{\tilde{\e}\in[0,2\e)}\ee^{-(r-\tilde{\e})\mdim}N(r-\tilde{\e},x)
        \leq \ee^{-r\mdim}N(r,x)
        \leq \sup_{\tilde{\e}\in[0,2\e)}\ee^{-(r-\tilde{\e})\mdim}N(r-\tilde{\e},x)
   \end{align*}
   we conclude that uniformly for $x\in\Sigma_A$
   \begin{align*}
        G(x)
	&= \lim_{r\to\infty}\ee^{-r\mdim}N(r,x).
    \end{align*}
    
    \emph{Case \ref{it:nldRi}}:  For $x\in\Sigma_A$ and $t\in\mathbb R$ set 
\begin{equation*}
	\renfcnn_x(t)\defeq\ee^{-t\mdim}\renfcn_x(t).
\end{equation*}
Then $\{\lvert \renfcnn_x\rvert\mid x\in\Sigma_A\}$ is equi d.\,R.\,i.\ by \ref{it:nldRi}. Moreover,
\begin{align}
\begin{aligned}\label{eq:renfcnn}
	\ee^{-t\mdim}N(t,x)
	&\defeq \sum_{n=0}^{\infty}\sum_{y:\sigma^n y=x}\codefunc(y)\renfcn_y(t-S_n\codefun(y))\ee^{S_n\eta(y)}\ee^{-t\mdim} \\
	&= \sum_{n=0}^{\infty}\sum_{y:\sigma^n y=x}\codefunc(y)\renfcnn_y(t-S_n\codefun(y))\ee^{S_n(\eta-\mdim\codefun)(y)}.
\end{aligned}
\end{align}
For showing that \eqref{eq:smoothing} implies $\lim_{r\to\infty}\ee^{-r\mdim}N(r,x)=G(x)$, we consider
  \[\begin{array}[h]{rcl}
    A_{\e}(r,x)&
    \defeq&\left\lvert \displaystyle{\int_{-\infty}^{\infty}}\prob_{\e}(r-T)\ee^{-T\mdim}N(T,x)\textup{d}T-\ee^{-r\mdim}N(r,x)\right\rvert\\
    &\stackrel{\eqref{eq:A1}}{\leq}& \left\lvert \displaystyle{\int_{r-2\e}^{r}}\prob_{\e}(r-\e-T)\ee^{-T\mdim}N(T,x)\textup{d}T-\displaystyle{\int_{r-2\e}^r}\prob_{\e}(r-\e-T)\ee^{-r\mdim}N(r,x)\textup{d}T\right\rvert\\
    && \hspace{0.8cm} +\mathfrak{C}\e + \left\lvert \ee^{-r\mdim}N(r,x) \right\rvert\e\\
    &\stackrel{\ref{it:boundedC},\eqref{eq:renfcnn}}{\leq}& \bigg{\lvert} \displaystyle{\int_{r-2\e}^r}\prob_{\e}(r-\e-T)\sum_{n=0}^{\infty}\sum_{y:\sigma^ny=x}\codefunc(y)\ee^{S_n(\eta-\mdim\codefun)(y)}\\
	&&\hspace{0.8cm}\times\big{[}\renfcnn_y(T-S_n\codefun(y))-\renfcnn_y(r-S_n\codefun(y))\big{]}\textup{d}T\bigg{\rvert}
   +2\mathfrak{C}\e \\
    &\leq&  \displaystyle{\sum_{n=0}^{\infty}\sum_{y:\sigma^ny=x}}\codefunc(y)\ee^{S_n(\eta-\mdim\codefun)(y)}\\
	&&\hspace{0.8cm}\times\displaystyle{\int_{r-2\e}^r}\prob_{\e}(r-\e-T)\Big{\lvert} \renfcnn_y(T-S_n\codefun(y))-\renfcnn_y(r-S_n\codefun(y))\Big{\rvert}\textup{d}T 
    +2\mathfrak{C}\e.
  \end{array}\]
For the last inequality we have used the monotone convergence theorem. 

Set 
      \[
      d^{2\e}(t)\defeq\sup_{y\in\Sigma_A}\left(\sup_{\tilde{\e}\in[0,2\e)}\renfcnn_y(t-\tilde{\e})-\inf_{\tilde{\e}\in[0,2\e)}\renfcnn_y(t-\tilde{\e})\right).
      \]
Since $\{\renfcnn_x\mid x\in\Sigma_A\}$ is equi d.\,R.\,i.\ we know, for sufficiently small $\e>0$, that
\begin{equation}\label{eq:d2e}
  \sum_{k\in\mathbb Z}d^{2\e}(k\cdot 2\e)\quad\text{exists and}
  \quad \lim_{\e\to 0}\sum_{k\in\mathbb Z}d^{2\e}(k\cdot 2\e)\cdot 2\e=0.
\end{equation}
Therefore, we may deduce the following chain of inequalities.
\begin{align*}
  A_{\e}(r,x)-2\mathfrak{C}\e
  &\leq \sum_{n=0}^{\infty}\sum_{y:\sigma^ny=x}\codefunc(y)\ee^{S_n(\eta-\mdim\codefun)(y)} d^{2\e}(r-S_n\codefun(y))\\
  &\leq \sum_{k\in\mathbb Z} d^{2\e}(k\cdot 2\e)\|\codefunc\|_{\infty}\sum_{n=0}^{\infty}\sum_{y:\sigma^ny=x}\ee^{S_n(\eta-\mdim\codefun)(y)} \mathds 1_{((k-1)2\e,(k+1)2\e]}(r-S_n\codefun(y)) \\
  &\leq c\|\codefunc\|_{\infty} \sum_{k\in\mathbb Z} d^{2\e}(k\cdot 2\e)\underbrace{\sum_{n=0}^{\infty}\sum_{y:\sigma^ny=x} \mu_{\eta-\mdim\codefun}([y\vert_n]) \mathds 1_{((k-1)2\e,(k+1)2\e]}(r-S_n\codefun(y))}_{\eqdef a_{\e,k}(r,x)},
\end{align*}
where for the last estimate we have used the Gibbs property \eqref{eq:Gibbs} of $\mu_{\eta-\mdim\codefun}$ with constant $c$.
For simplicity of presentation we first treat the case that $\codefun$ is strictly positive. In this case there exists $\kappa>0$ such that $\codefun(x)\geq\kappa$ for all $x\in\Sigma_A$ since $\codefun$ is continuous and $\Sigma_A$ is compact. As we will later be interested in the limiting behaviour when $\e\to 0$, we can freely assume that $4\e\leq\kappa$ and set $m\defeq\lfloor \kappa/(2\e)\rfloor$. Note that $m\geq 2$. Let $\sigma^{-n}(x)$ denote the set of pre-images of $x$ under $\sigma^n$, that is
\begin{equation*}
  \sigma^{-n}(x)\defeq\{y\in\Sigma_A\mid \sigma^n y=x\}.
\end{equation*}
Let $y\in\sigma^{-n}(x)$ and assume that $r-S_n\codefun(y)\in((k-1)2\e,(k+1)2\e]\eqdef I_k(\e)$. Then $\sigma y\in\sigma^{-(n-1)}(x)$ and
\begin{align*}
  r-S_{n-1}\codefun(\sigma y)
  &=r-S_n\codefun(y)+\codefun(y)
  \geq r-S_n\codefun(y)+\kappa
  > (k-1)2\e + 2m\e\\
  &=((k-2+m)+1)2\e,
\end{align*}
whence $r-S_{n-1}\codefun(\sigma y)\notin I_{k+q}(\e)$ for $q\in\{0,\ldots,m-2\}$.
Now let $\tilde{y}\in\sigma^{-(n+1)}(x)$ be such that $\sigma\tilde{y}=y$. Then
\begin{align*}
  r-S_{n+1}\codefun(\tilde{y})
  &=r-S_n\codefun(y)-\codefun(\tilde{y})
  \leq r-S_n\codefun(y)-\kappa
  \leq (k+1)2\e - 2m\e\\
  &=((k-m+2)-1)2\e,
\end{align*}
whence $r-S_{n+1}\codefun(\tilde{y})\notin I_{k-q}(\e)$ for $q\in\{0,\ldots,m-2\}$.
 Therefore,
\begin{align}
  &\sum_{q=0}^{m-2} a_{\e,k+q}(r,x)\label{eq:lessthanone}\\
  &\qquad= \sum_{q=0}^{m-2} \sum_{n=0}^{\infty}\sum_{y:\sigma^ny=x} \mu_{\eta-\mdim\codefun}([y\vert_n]) \mathds 1_{((k+q-1)2\e,(k+q+1)2\e]}(r-S_n\codefun(y))
  \leq 1,\nonumber
\end{align}
as $\mu_{\eta-\mdim\codefun}$ is a probability measure on $\Sigma_A$.

Next, we show existence of $q\in\{0,\ldots, m-2\}$ such that, for all $\e<\kappa/8$,
\begin{align}\label{eq:dineq}
   \sum_{k\in\mathbb Z} d^{2\e}(k\cdot 2\e)a_{\e,k+q}(r,x)
   \leq \frac{2}{\kappa}\cdot 2\e\sum_{k\in\mathbb Z} d^{2\e}(k\cdot 2\e).
\end{align}
Assume that this was not the case. Since for $\e<\kappa/8$ we have that $1<2(\kappa-4\e)/\kappa<2\e(m-1)\cdot2/\kappa$, it follows that
\begin{align*}
   \sum_{k\in\mathbb Z} d^{2\e}(k\cdot 2\e)
   &< 2\e(m-1)\frac{2}{\kappa} \sum_{k\in\mathbb Z} d^{2\e}(k\cdot 2\e)
   = \sum_{q=0}^{m-2}2\e\frac{2}{\kappa} \sum_{k\in\mathbb Z} d^{2\e}(k\cdot 2\e)\\
   &\hspace{-0.3cm}\stackrel{\neg\eqref{eq:dineq}}{<}\sum_{q=0}^{m-2} \sum_{k\in\mathbb Z} d^{2\e}(k\cdot 2\e)a_{\e,k+q}(r,x)
   \stackrel{\eqref{eq:lessthanone}}{\leq}\sum_{k\in\mathbb Z} d^{2\e}(k\cdot 2\e).
\end{align*}
As $\sum_{k\in\mathbb Z} d^{2\e}(k\cdot 2\e)$ exists for sufficiently small $\e$, it follows that \eqref{eq:dineq} must be true for some $q\in\{0,\ldots,m-2\}$. 

Notice $a_{\e,k}(r-2q\e,x)=a_{\e,k+q}(r,x)$, and so for this $q$,
\begin{align}
\begin{aligned}\label{eq:Arx}
   \limsup_{r\to\infty} A_{\e}(r,x)
   &=\limsup_{r\to\infty} A_{\e}(r-2q\e,x)\\
   &\leq c\|\codefunc\|_{\infty}\limsup_{r\to\infty}\sum_{k\in\mathbb Z} d^{2\e}(k\cdot 2\e)\underbrace{a_{\e,k}(r-2q\e,x)}_{=a_{\e,k+q}(r,x)}+2\mathfrak{C}\e\\
   &\hspace{-0.2cm}\stackrel{\eqref{eq:dineq}}{\leq}c\|\codefunc\|_{\infty} \frac{2}{\kappa}2\e\sum_{k\in\mathbb Z} d^{2\e}(k\cdot 2\e)+2\mathfrak{C}\e.
\end{aligned}
\end{align}

All in all we have for each sufficiently small $\e>0$
\begin{align*}
  &\limsup_{r\to\infty}\left\lvert G(x) - \ee^{-r\mdim}N(r,x)\right\rvert\\
	&\qquad\ \leq \limsup_{r\to\infty}\left\lvert G(x)-\int_{-\infty}^{\infty}\prob_{\e}(r-T)\ee^{-T\mdim}N(T,x)\textup{d}T \right\rvert\\
	&\qquad\qquad + \limsup_{r\to\infty}\left\lvert  \int_{-\infty}^{\infty}\prob_{\e}(r-T)\ee^{-T\mdim}N(T,x)\textup{d}T - \ee^{-r\mdim}N(r,x)\right\rvert\\
   &\qquad\stackrel{\eqref{eq:smoothing}}{=} \limsup_{r\to\infty} A_{\e}(r,x)\\
   &\qquad\stackrel{\eqref{eq:Arx}}{\leq}c \|\codefunc\|_{\infty}\frac{2}{\kappa} 2\e \sum_{k\in\mathbb Z} d^{2\e}(k\cdot 2\e) +2\mathfrak{C}\e.
\end{align*}
This implies that
\begin{align*}
  \limsup_{r\to\infty}\left\lvert G(x) - \ee^{-r\mdim}N(r,x)\right\rvert
  &\leq \limsup_{\e\to 0}\left(c\|\codefunc\|_{\infty}2\e\frac{2}{\kappa}  \sum_{k\in\mathbb Z} d^{2\e}(k\cdot 2\e) +2\mathfrak{C}\e\right)\\
  &\hspace{-0.2cm}\stackrel{\eqref{eq:d2e}}{=}0.
\end{align*}

Now we treat the case that $\codefun$ is not strictly positive.
By assumption there exists $n\in\mathbb N$ for which $S_n\codefun$ is strictly positive. Since $\codefun$ is continuous and $\Sigma_A$ is compact, there exists a $\tilde{\kappa}>0$ for which $S_n\codefun\geq\tilde{\kappa}$. For $l\in\mathbb N$ and $j\in\{0,\ldots,n-1\}$ this implies that 
\begin{align}
	S_{ln+j}\codefun(x)
	&=\sum_{i=0}^{ln+j-1}\codefun(\sigma^i x)
	= \sum_{m=0}^{l-1}\sum_{i=mn}^{(m+1)n-1}\codefun(\sigma^i x)+\sum_{i=ln}^{ln+j-1}\codefun(\sigma^ix)\label{eq:Snpositive}\\
	&=\sum_{m=0}^{l-1} S_n\codefun(\sigma^i(\sigma^{mn}x)) +\sum_{i=ln}^{ln+j-1}\codefun(\sigma^ix)
	\geq l\tilde{\kappa}+\inf_{\substack{x\in\Sigma_A \\ 0\leq i\leq n-1}}S_i\codefun(x)\nonumber
\end{align}
From this we can conclude that there exists $m^*\in\mathbb N$ such that $S_m\codefun$ is strictly positive for all $m\geq m^*$. Thus, there exists $\kappa>0$ with $S_m\codefun\geq\kappa$ for all $m\geq m^*$.
In the same way as in the case that $\codefun$ is strictly positive we can show that if $y\in\sigma^{-n}(x)$ with $r-S_n\codefun(y)\in I_k(\e)$ then $r-S_{n-m^*}\codefun(\sigma^{m^*} y)\notin I_{k+q}(\e)$ for $q\in\{0,\ldots,\lfloor\kappa/(2\e)\rfloor-2\}$ whenever $n\geq m^*$ and $r-S_{n+m^*}\codefun(\tilde{y})\notin I_{k-q}(\e)$ for all $q\in\{0,\ldots,\lfloor\kappa/(2\e)\rfloor-2\}$ where $\tilde{y}\in\sigma^{-(n+m^*)}(x)$ is such that $\sigma^{m^*}\tilde{y}=y$. This implies that 
\begin{align*}
  \sum_{q=0}^{\lfloor\kappa/(2\e)\rfloor-2}\sum_{n=0}^{\infty}\sum_{y:\sigma^ny=x} \mu_{\eta-\mdim\codefun}([y\vert_n]) \mathds 1_{((k+q-1)2\e,(k+q+1)2\e]}(r-S_n\codefun(y))
    \leq 2m^*.
\end{align*}
The remainder of the proof follows in the same way as in the case that $\codefun$ is strictly positive.
\end{proof}

\begin{proof}[Proof of Thm.~\ref{thm:RT1}\ref{it:RT1:nl}]
	Inspired by \cite[Thm.~1]{Lalley} we consider for $x\in\Sigma_A$ the Fourier-Laplace transform of $t\mapsto\ee^{-t\mdim}N(t,x)$ at $z\in\mathbb C$:
	\begin{equation}
		\Laplace(z,x)\defeq\int_{-\infty}^{\infty}\ee^{zT}\ee^{-T\mdim}N(T,x)\textup{d}T.
	\end{equation}
	Conditions \ref{it:boundedC} and \ref{it:boundedneg} imply that $\Laplace(\cdot,x)\colon\mathbb C\to\mathbb C$, $z\mapsto \Laplace(z,x)$ is well-defined and analytic on
	$\{z\in\mathbb C\mid -s<\Re(z)<0\}$.
        What is more, for small enough $\e>0$, Conditions \ref{it:boundedC} and \ref{it:boundedneg} imply that $\Laplace(\cdot,x)$ converges absolutely and uniformly on
	\[
		\{z\in\mathbb C\mid -s+\e\leq\Re(z)\leq-\e\}.
	\]
	Now, in every such region, using \ref{it:boundedC} as well as the monotone and dominated convergence theorems, we obtain
	\begin{align*}
	  \Laplace(z,x)
	  &=\sum_{n=0}^{\infty}\sum_{y:\sigma^n y=x}\ee^{S_n(\eta+(z-\mdim)\codefun)(y)}\codefunc(y)\int_{-\infty}^{\infty}\ee^{(z-\mdim)T}\renfcn_y(T)\textup{d}T\\
          &=\sum_{n=0}^{\infty}\PF_{\eta+(z-\mdim)\codefun}^n\left(\codefunc\int_{-\infty}^{\infty}\ee^{(z-\mdim)T}\renfcn_{\cdot}(T)\textup{d}T\right)(x),
	\end{align*}
	where $\renfcn_{\cdot}(T)\colon\Sigma_A\to\mathbb R$, $x\mapsto \renfcn_x(T)$.
        Note that Conditions \ref{it:Lebesgueintegrable} to \ref{it:boundedneg} imply that $\int_{-\infty}^{\infty}\ee^{(z-\mdim)T}\renfcn_{x}(T)\textup{d}T$ exists if $-s<\Re(z)\leq 0$, since $\codefunc(x)\lvert\renfcn_x(t)\rvert\leq N^{\text{abs}}(t,x)$.
        Thus, by Thm.~\ref{thm:TheoremBLalley}\ref{it:lalleyBother}, the spectral radius formula \eqref{thm:SpectralRadiusFormula} and the fact that $\eigenv_{\eta-\mdim\xi}=1$ (see Prop.~\ref{thm:eigenvalueone}), the above series converges for $-s<\Re(z)<0$, and we obtain
	\[
		\Laplace(z,x)
		=(I-\PF_{\eta+(z-\mdim)\codefun})^{-1}\left(\codefunc\int_{-\infty}^{\infty}\ee^{(z-\mdim)T}\renfcn_{\cdot}(T)\textup{d}T\right)(x).
	\]
	By Prop.~\ref{prop:Lalley7.3}, the operator-valued function $z\mapsto (I-\PF_{\eta+(z-\mdim)\codefun})^{-1}$ is holomorphic at every $z$ on the line $\Re(z)=0$ except for $z=0$, which is a simple pole by Prop.~\ref{prop:Lalley7.12}. Thus, according to Cor.~\ref{cor:Lalley7.2} the residue of $z\mapsto\Laplace(z,x)$ at $z=0$ is
	\begin{equation}
                -\frac{\int_{\Sigma_A}\codefunc(y)\int_{-\infty}^{\infty}\ee^{-T\mdim}\renfcn_y(T)\textup{d}T\textup{d}\nu_{\eta-\mdim\codefun}(y)}{\int\codefun\textup{d}\mu_{\eta-\mdim\codefun}}\eigenf_{\eta-\mdim\codefun}(x)=-G(x),
	\end{equation}
        where $G$ is as in Thm.~\ref{thm:RT1}\ref{it:RT1:nl}.
	Thus, $\Laplace(z,x)$ has the following representation.
	\begin{equation}\label{eq:Laplaceq}
		\Laplace(z,x)=q(z,x)-\frac{G(x)}{z},
	\end{equation}
	where $q(\cdot,x)\colon\mathbb C\to\mathbb C$, $z\mapsto q(z,x)$ is holomorphic in a region containing the strip $\{z\in\mathbb C\mid -s+\e\leq\Re(z)\leq 0\}$ with sufficiently small $\e>0$. Conditions \ref{it:boundedC}, \ref{it:boundedneg} and Lebesgue's dominated convergence theorem now imply for every $\e\in(0,1]$ that
	\begin{align}
	\begin{aligned}\label{eq:domconv}
		&\int_{-\infty}^{\infty}\prob_{\e}(r-T)\ee^{-T\mdim}N(T,x)\textup{d}T\\
		&\quad=\lim_{\beta\searrow 0}\int_{-\infty}^{\infty}\prob_{\e}(r-T)\ee^{-T(\mdim+\beta)}N(T,x)\textup{d}T.
	\end{aligned}
	\end{align}
	Using the inverse Fourier-Laplace transform $\prob_{\e}(t)=\int_{-\infty}^{\infty}\ee^{-\im\theta t}\hat{\prob_{\e}}(\im\theta)\textup{d}\theta/(2\pi)$ and that the integral on the left hand side of \eqref{eq:domconv} exists, we can convert the integral from the right hand side of \eqref{eq:domconv} for sufficiently small $\beta>0$ as follows.
	\begin{align}
	\begin{aligned}\label{eq:parseval}
		&\int_{-\infty}^{\infty}\prob_{\e}(r-T)\ee^{-\beta T-T\mdim}N(T,x)\textup{d}T\\
		&\quad= \int_{-\infty}^{\infty}\int_{-\infty}^{\infty}\ee^{-\im\theta(r-T)}\hat{\prob_{\e}}(\im\theta)\frac{\textup{d}\theta}{2\pi}\ee^{-\beta T-T\mdim}N(T,x)\textup{d}T\\
		&\quad= \int_{-\infty}^{\infty}\int_{-\infty}^{\infty}\ee^{(\im\theta-\beta)T}\ee^{-T\mdim}N(T,x)\hat{\prob_{\e}}(\im\theta)\ee^{-\im\theta r}\frac{\textup{d}\theta}{2\pi}\textup{d}T\\
		&\quad=\int_{-\infty}^{\infty}\Laplace(\im\theta-\beta,x)\hat{\prob_{\e}}(\im\theta)\ee^{-\im\theta r}\frac{\textup{d}\theta}{2\pi}\\
		&\ \stackrel{(\ref{eq:Laplaceq})}{=} \int_{-\infty}^{\infty}\left(q(\im\theta-\beta,x)+\frac{G(x)(\im\theta+\beta)}{\theta^2+\beta^2}\right)
		\hat{\prob_{\e}}(\im\theta)\ee^{-\im\theta r}\frac{\textup{d}\theta}{2\pi}.
	\end{aligned}
	\end{align}
	The measures given by $\frac{\beta}{\pi(\theta^2+\beta^2)}\textup{d}\theta$ converge weakly to the Dirac point-mass at zero as $\beta\to 0$ \cite[p.\,31]{Lalley}. Moreover, the imaginary part on the right hand side of \eqref{eq:parseval} can be ignored since the left hand side is real. Using that $\hat{\prob_{\e}}(\im\theta)$ is real and that $\hat{\prob_{\e}}(0)=1$ for all $\e\in(0,1]$, we obtain
	\begin{align}
		&\lim_{\beta\searrow 0}\int_{-\infty}^{\infty}\prob_{\e}(r-T)\ee^{-\beta T-T\mdim}N(T,x)\textup{d}T\label{eq:erg}\\
		&=\Re\left(\int_{-\infty}^{\infty}q(\im\theta,x)\hat{\prob_{\e}}(\im\theta)\ee^{-\im\theta r}\frac{\textup{d}\theta}{2\pi}+\frac{G(x)}{2} + G(x)\int_{-\infty}^{\infty}\hat{\prob_{\e}}(\im\theta)\frac{\im\ee^{-\im\theta r}}{\theta}\frac{\textup{d}\theta}{2\pi}\right).\nonumber
	\end{align}
        We separately treat the two integrals on the right hand side of \eqref{eq:erg} and begin with the first one.
        Recall that $\hat{\prob_{\e}}(\im\theta)=\hat{\prob}(\im\theta\e/\tau(\e))$ is $\mathcal C^{\infty}$ and has compact support, which is contained in $[-\tau(\e)/\e,\tau(\e)/\e]\eqdef[-S,S]$. Also, recall that $q(\cdot,x)$ is analytic in a neighbourhood of $[-\im S,\im S]$ and continuous in $x$. As mentioned in \cite[p.\,31f.]{Lalley}, the Cauchy integral formula for derivatives implies that $\frac{\textup{d}}{\textup{d}z}q(z,x)\vert_{z=\im\theta}$ is uniformly continuous in $\theta$ and hence bounded on $[- S, S]\times\Sigma_A$. Thus, $\frac{\textup{d}}{\textup{d}z}q(z,x)$ is bounded on $[-\im S,\im S]\times\Sigma_A$. Integration by parts now implies that
	\begin{align}
	\begin{aligned}\label{eq:partialintegration}
		&\int_{-S}^{S} q(\im\theta,x)\hat{\prob_{\e}}(\im\theta)\ee^{-\im\theta r}\frac{\textup{d}\theta}{2\pi}\\
		&\quad= \im q(\im\theta,x)\hat{\prob_{\e}}(\im\theta)\frac{\ee^{-\im \theta r}}{2\pi r}\Big{\vert}_{\theta=-S}^{S}
			+\im\int_{-S}^{S}\frac{\textup{d}}{\textup{d}\theta}\left(q(\im\theta,x)\hat{\prob_{\e}}(\im\theta)\right)\frac{\ee^{-\im \theta r}}{2\pi r}\textup{d}\theta.
	\end{aligned}
	\end{align}
	As the support of $\hat{\prob_{\e}}$ is contained in $[-\im S,\im S]$ and $\hat{\prob_{\e}}$ is $\mathcal C^{\infty}$, the first term on the right hand side of \eqref{eq:partialintegration} equals zero for all $r>0$. For the second term on the right hand side of \eqref{eq:partialintegration} we use that the definition of $\hat{\prob}$ given in \eqref{eq:prob1} and the fact that $\hat{\prob_{\e}}(\im\theta)=\hat{\prob}(\im\theta\e/\tau(\e))$ imply that $\hat{\prob_{\e}}(\im\theta)$ and $\frac{\textup{d}}{\textup{d}\theta}\hat{\prob_{\e}}(\im \theta)$ are uniformly bounded for $\e\in(0,1]$. This shows that the second term on the right hand side of \eqref{eq:partialintegration} converges to zero uniformly for $\e\in(0,1]$ and $x\in\Sigma_A$ as $r\to\infty$.
        Thus, 
        \begin{equation}\label{eq:Req}
        \lim_{r\to\infty}\Re\left(\int_{-\infty}^{\infty}q(\im\theta,x)\hat{\prob_{\e}}(\im\theta)\ee^{-\im\theta r}\frac{\textup{d}\theta}{2\pi}\right)=0
        \end{equation}
        uniformly for $x\in\Sigma_A$.
        Now, we consider the second integral on the right hand side of \eqref{eq:erg}:
        \begin{align*}
        \Re\left(\int_{-\infty}^{\infty}\hat{\prob_{\e}}(\im\theta)\frac{\im\ee^{-\im\theta r}}{\theta}\frac{\textup{d}\theta}{2\pi}\right)
        &= \int_{- S}^{S}\hat{\prob_{\e}}(\im\theta)\frac{\sin(\theta r)}{\theta}\frac{\textup{d}\theta}{2\pi}
        = \int_{0}^{S}\hat{\prob_{\e}}(\im\theta)\frac{\sin(\theta r)}{\theta}\frac{\textup{d}\theta}{\pi}.
        \end{align*}
        Using the sine integral $\texttt{Si}(t)\defeq\int_0^{t}\frac{\sin(\theta)}{\theta}\textup{d}\theta$ and $\lim_{t\to\infty}\texttt{Si}(t)=\pi/2$ we infer that 
        \begin{align*}
        \lim_{r\to\infty}\int_0^{S}\frac{\sin(\theta r)}{\theta}\frac{\textup{d}\theta}{\pi}
        =\lim_{r\to\infty}\int_0^{rS}\frac{\sin(\theta)}{\theta}\frac{\textup{d}\theta}{\pi}
        =\lim_{r\to\infty}\texttt{Si}(r S)/\pi
        =1/2
        \end{align*}
        and remark that inserting the factor $\hat{\prob_{\e}}(\im\theta)$ in the above integrand does not change this limit, since $\hat{\prob_{\e}}$ is uniformly continuous and $\hat{\prob}_{\e}(0)=1$. Thus, for the third term on the right hand side of \eqref{eq:erg} we obtain
        \begin{align}\label{eq:Gx}       
        \lim_{r\to\infty}\Re\left(G(x)\int_{-\infty}^{\infty}\hat{\prob_{\e}}(\im\theta)\frac{\im\ee^{-\im\theta r}}{\theta}\frac{\textup{d}\theta}{2\pi}\right)
        =\frac{G(x)}{2}
        \end{align}
        uniformly for $x\in\Sigma_A$.
        Combining \eqref{eq:domconv}, \eqref{eq:erg}, \eqref{eq:Req} and \eqref{eq:Gx} it follows that 
        \begin{equation*}
        \lim_{r\to\infty}\int_{-\infty}^{\infty}\prob_{\e}(r-T)\ee^{-T\mdim}N(T,x)\textup{d}T=G(x)
        \end{equation*}
        uniformly for $x\in\Sigma_A$. An application of Lem.~\ref{lem:suffices} yields the desired result.
\end{proof}

\begin{proof}[Proof of Thm.~\ref{thm:RT1}\ref{it:RT1:l}]
In the lattice situation we work with discrete Fourier-Laplace transforms inspired by \cite[proof of Thm.~2]{Lalley}. Conditions \ref{it:boundedC} and \ref{it:boundedneg} imply that for fixed $\beta\in[0,\aaa)$ and $x\in\Sigma_A$, the function $\hat{N}^{\beta}(\cdot,x)$ given by 
\begin{equation}
	\hat{N}^{\beta}(z,x)\defeq\sum_{l=-\infty}^{\infty}\ee^{lz}N(\aaa l+\beta-\psi(x),x)
\end{equation}
is well-defined and analytic on 
\[
   \mathcal{Z}\defeq\{z\in\mathbb C\mid -\aaa(s+\mdim)<\Re(z)<-\aaa\mdim\}.
\]
Note that $S_n\xi=S_n\z+\psi-\psi\circ\sigma^n$ and recall that $S_n\z\in\aaa\mathbb Z$ for all $n\in\mathbb N$. Thus, \ref{it:boundedC} and \ref{it:boundedneg} imply that we can make the following conversions for $z\in\mathcal Z$. 

\begin{align*}
	\hat{N}^{\beta}(z,x)
	&=\sum_{n=0}^{\infty}\sum_{y:\sigma^ny=x}\codefunc(y)\ee^{S_n\eta(y)}
	\sum_{l=-\infty}^{\infty}\ee^{lz}\renfcn_{y}(\aaa l+\beta-\psi(x)-S_n\codefun(y))\\
        &=\sum_{n=0}^{\infty}\sum_{y:\sigma^ny=x}\codefunc(y)\ee^{S_n\eta(y)}
	\sum_{l=-\infty}^{\infty}\ee^{(l+\aaa^{-1}S_n\z(y))z}\renfcn_{y}(\aaa l+\beta-\psi(y))\\
        &=\sum_{n=0}^{\infty}\sum_{y:\sigma^ny=x}\codefunc(y)\ee^{S_n(\eta+\aaa^{-1}z\z)(y)}
	\sum_{l=-\infty}^{\infty}\ee^{lz}\renfcn_{y}(\aaa l+\beta-\psi(y))\\
        &=\sum_{n=0}^{\infty}\PF^n_{\eta+\aaa^{-1}z\z}\left(\codefunc
	\sum_{l=-\infty}^{\infty}\ee^{lz}\renfcn_{\cdot}(\aaa l+\beta-\psi)\right)(x)
\end{align*}
with $\renfcn_{\cdot}(t)\colon\Sigma_A\to\mathbb R$, $x\mapsto \renfcn_x(t)$ as before.
For $z\in\mathcal Z$ we have that $\Re(\aaa^{-1}z)<-\mdim$. Moreover, $\eigenf_{\eta-\mdim\z}=\ee^{-\mdim\psi}\eigenf_{\eta-\mdim\codefun}$ and $\eigenv_{\eta-\mdim\z}=\eigenv_{\eta-\mdim\codefun}$.
Thus, Prop.~\ref{thm:eigenvalueone} yields that $\eigenv_{\eta+\Re(\aaa^{-1}z)\z}<1$. Thm.~\ref{thm:TheoremBLalley} (both parts) and the spectral radius formula \eqref{thm:SpectralRadiusFormula} now imply, for every $z\in \mathcal Z$, that
\[
	\hat{N}^{\beta}(z,x)
	=(I-\PF_{\eta+\aaa^{-1}z\z})^{-1}\left(
\codefunc\sum_{l=-\infty}^{\infty} \ee^{lz}\renfcn_{\cdot}(\aaa l+\beta-\psi)\right)(x).
\]
Note that $\|\codefunc\sum_{l=-\infty}^{\infty} \ee^{lz}f_{\cdot}(\aaa l+\beta-\psi)\|_{\infty}$ is finite for all $z\in\mathcal Z$ because of Conditions \ref{it:boundedC} and \ref{it:boundedneg}.

Because $\aaa^{-1}\z$ is integer-valued but not co-homologous to any function valued in a proper subgroup of the integers, we can apply Prop.~\ref{prop:Lalley7.4}. Thus, $z\mapsto(I-\PF_{\eta+\aaa^{-1}z\z})^{-1}$ is $2\pi\im$-periodic and holomorphic at every $z$ on the line $\Re(z)=-\aaa\mdim$ such that $\Im(z)/(2\pi)$ is not an integer. Therefore, $z\mapsto (I-\PF_{\eta+\aaa^{-1}z\z})^{-1}$
has an isolated singularity at $z=-\aaa\mdim$ and is holomorphic at each $z=-\aaa\mdim+\im\theta$, for $0<\lvert\theta\rvert\leq\pi$. By Prop.~\ref{prop:Lalley7.12} the singularity of $\hat{N}^{\beta}(z,x)$ at $z=-\aaa\mdim$ is 
\begin{align*}
   \frac{\eigenv_{\eta+\aaa^{-1}z\z}}{1-\eigenv_{\eta+\aaa^{-1}z\z}}
\underbrace{\int_{\Sigma_A}\codefunc(y)\sum_{l=-\infty}^{\infty}\ee^{l z}\renfcn_y(\aaa l+\beta-\psi(y))\textup{d}\nu_{\eta+\aaa^{-1}z\z}(y)\eigenf_{\eta+\aaa^{-1}z\z}(x)}_{\eqdef E_{x,\beta}(z)}
\end{align*}
Since the function $E_{x,\beta}$ is continuous at $-\aaa\mdim$, we deduce from \eqref{eq:analyticpressure} that the singularity of $\hat{N}^{\beta}(z,x)$ at $z=-\aaa\mdim$ is a simple pole with residue
\begin{equation*}
        C_{\beta}(x)\defeq-\frac{\aaa}{\int\z\textup{d}\mu_{\eta-\mdim\z}}E_{x,\beta}(-\aaa\mdim).
\end{equation*}
It follows that $\hat{N}^{\beta}(\cdot,x)\colon\mathbb C\to\mathbb C$, $z\mapsto \hat{N}^{\beta}(z,x)$ is meromorphic in
\[
        \tilde{\mathcal Z}(\e)\defeq\{z\in\mathbb C\mid-\aaa(\mdim+s)<\Re(z)<-\aaa\mdim+\e,\ 0\leq\Im(z)\leq\pi\},
\]
for some $\e>0$, and that the only singularity in this region is a simple pole at $-\aaa\mdim$ with residue $C_{\beta}(x)$.
Additionally, by \ref{it:boundedneg}
\[
	\sum_{l=-\infty}^{-1}\ee^{lz}N(\aaa l+\beta-\psi(x),x)
\]
is finite for $\Re(z)>-\aaa(\mdim+ s)$. We conclude that there exists $\e>0$ such that
\[
	\sum_{l=0}^{\infty}\ee^{lz}N(\aaa l+\beta-\psi(x),x)-\frac{C_{\beta}(x)}{z+\aaa\mdim}
\]
is holomorphic in $\tilde{\mathcal{Z}}(\e)$. 
Also observe that $z\mapsto (\ee^{z+\aaa\mdim}-1)/(z+\aaa\mdim)$ is holomorphic in $\mathbb C$.
Making the change of variable $\tilde{z}\defeq\ee^{z+\aaa\mdim}$ we obtain that 
\[
	\sum_{l=0}^{\infty}\tilde{z}^l\ee^{-\aaa l\mdim}N(\aaa l+\beta-\psi(x),x)-\frac{C_{\beta}(x)}{\tilde{z}-1}
\]
is holomorphic in $\{\ee^{z+\aaa\mdim}\mid z\in\tilde{\mathcal{Z}}(\e)\}$. This implies that 
\[
	L(\tilde{z},x)\defeq\sum_{l=0}^{\infty}\tilde{z}^l\left(\ee^{-\aaa l\mdim}N(\aaa l+\beta-\psi(x),x)+C_{\beta}(x)\right)
\]
is holomorphic in $\{\tilde z\mid\lvert\tilde{z}\rvert<\ee^{\e}\}$ (compare \cite[p.\,27]{Lalley}). 
Since $\ee^{\e}>1$, the coefficient sequence of the power series of $L(\cdot,x)\colon\mathbb C\to\mathbb C$, $z\mapsto L(z,x)$ converges to zero exponentially fast, more precisely, 
\[
	\ee^{-\aaa n\mdim}N(\aaa n+\beta-\psi(x),x)+C_{\beta}(x)
	\in\mathfrak{o}((1+(\ee^{\e}-1)/2)^{-n})
\]
as $n\to\infty$ ($n\in\mathbb N$).
Thus, for $x\in\Sigma_A$ we have
\begin{align*}
	N(t,x)
	&=N\bigg(\aaa\underbrace{\left\lfloor \frac{t+\psi(x)}{\aaa}\right\rfloor}_{\eqdef n}+\underbrace{\aaa\Big{\{}\frac{t+\psi(x)}{\aaa}\Big{\}}}_{\eqdef\beta}-\psi(x),x\bigg)\\
	&\sim-\ee^{\aaa\left\lfloor\frac{t+\psi(x)}{\aaa}\right\rfloor\mdim}C_{\aaa\{(t+\psi(x))/{\aaa}\}}(x)\\
        &=\ee^{t\mdim}\ee^{-\aaa\big{\{}\frac{t+\psi(x)}{\aaa}\big{\}}\mdim}\ee^{\mdim\psi(x)}\frac{\aaa}{\int\z\textup{d}\mu_{\eta-\mdim\z}}\eigenf_{\eta-\mdim\z}(x)\\
        &\qquad\times\int_{\Sigma_A}\codefunc(y)\sum_{l=-\infty}^{\infty}\ee^{-l\aaa\mdim}\renfcn_y\left(\aaa l+\aaa\Big{\{}\frac{t+\psi(x)}{\aaa}\Big{\}}-\psi(y)\right)\textup{d}\nu_{\eta-\mdim\z}(y)\\
        &= \ee^{t\mdim}\tilde{G}_x(t)
\end{align*}
as $t\to\infty$. Since in all instances where $t$ occurs only the fractional part is involved, it is clear that $\tilde{G}_x$ is periodic with period $\aaa$, which finishes the proof.
\end{proof}

\begin{proof}[Proof of Thm.~\ref{thm:RT1}\ref{it:RT1:av}]
First, consider the case that $\codefun$ is non-lattice. Since $\ee^{-t\mdim}N(t,x)$ is bounded in $t$ by \ref{it:boundedC}, the result from Thm.~\ref{thm:RT1}\ref{it:RT1:nl} implies Thm.~\ref{thm:RT1}\ref{it:RT1:av}.

Second,  consider the case that $\codefun$ is lattice. Thm.~\ref{thm:RT1}\ref{it:RT1:l} states that
\begin{equation}
        \ee^{-t\mdim}N(t,x)
        \sim\tilde{G}_x(t)\quad\text{as}\ t\to\infty.
\end{equation}
Since $\ee^{-t\mdim}N(t,x)$ is bounded in $t$ by \ref{it:boundedC}, and $\tilde{G}_x$ is periodic with period $\aaa$ we have 
\begin{align*}
        \lim_{t\to\infty}t^{-1}\int_0^t\ee^{-T\mdim}N(T,x)\textup{d}T
        &=\lim_{t\to\infty}t^{-1}\int_0^{\aaa\left\lfloor t/a\right\rfloor}\ee^{-T\mdim}N(T,x)\textup{d}T\\
        &=\lim_{t\to\infty}t^{-1}\int_0^{\aaa\left\lfloor t/a\right\rfloor}\tilde{G}_x(T)\textup{d}T\\
        &=\lim_{t\to\infty}t^{-1}\left\lfloor \frac{t}{a}\right\rfloor\int_0^{\aaa}\tilde{G}_x(T)\textup{d}T\\
        &=\frac{1}{a}\int_0^{\aaa}\tilde{G}_x(T)\textup{d}T.
\end{align*}
The latter integral transforms as follows:
\begin{align*}
        &\int_0^{\aaa}\tilde{G}_x(T)\textup{d}T\\
        &=\int_0^{\aaa}\ee^{-\aaa\big{\{}\frac{T+\psi(x)}{\aaa}\big{\}}\mdim}
    \frac{\aaa\ee^{\mdim\psi(x)}}{\int\z\textup{d}\mu_{\eta-\mdim\z}}\eigenf_{\eta-\mdim\z}(x)\\
    &\quad\times\int_{\Sigma_A}\codefunc(y)\sum_{l=-\infty}^{\infty}\ee^{-\aaa l\mdim}\renfcn_y\left(\aaa l+\aaa\left\{\tfrac{T+\psi(x)}{\aaa}\right\}-\psi(y)\right)\textup{d}\nu_{\eta-\mdim\z}(y)\textup{d}T\\
    &=\frac{\aaa\ee^{\mdim\psi(x)}}{\int\z\textup{d}\mu_{\eta-\mdim\z}}\eigenf_{\eta-\mdim\z}(x)\int_{\Sigma_A}\codefunc(y)\\
    &\quad\times\sum_{l=-\infty}^{\infty}
\int_0^{\aaa}\ee^{-\left(\aaa l+\aaa\big{\{}\frac{T+\psi(x)}{\aaa}\big{\}}\right)\mdim}
\renfcn_y\left(\aaa l+\aaa\left\{\tfrac{T+\psi(x)}{\aaa}\right\}-\psi(y)\right)\textup{d}T\textup{d}\nu_{\eta-\mdim\z}(y)\\
    &=\frac{\aaa\ee^{\mdim\psi(x)}\eigenf_{\eta-\mdim\z}(x)}{\int\z\textup{d}\mu_{\eta-\mdim\z}}\int_{\Sigma_A}\hspace{-0.1cm}\codefunc(y)\ee^{-\psi(y)\mdim}
      \sum_{l=-\infty}^{\infty}\hspace{-0.1cm}
\int_{\aaa l-\psi(y)}^{\aaa (l+1)-\psi(y)}\hspace{-0.3cm}\ee^{-T\mdim}
\renfcn_y\left(T\right)\textup{d}T\textup{d}\nu_{\eta-\mdim\z}(y)\\
    &=\frac{\aaa\eigenf_{\eta-\mdim\codefun}(x)}{\int\codefun\textup{d}\mu_{\eta-\mdim\codefun}}\int_{\Sigma_A}\codefunc(y)
      \int_{-\infty}^{\infty}\ee^{-T\mdim}\renfcn_y\left(T\right)\textup{d}T\textup{d}\nu_{\eta-\mdim\codefun}(y)\\
      &=\aaa G(x),
\end{align*}
where for the second to last equality we used that $\ee^{-\mdim\psi}\textup{d}\nu_{\eta-\mdim\z}=\textup{d}\nu_{\eta-\mdim\xi}$,
$\ee^{\mdim\psi}\eigenf_{\eta-\mdim\z}=\eigenf_{\eta-\mdim\xi}$ and that $\int\z\textup{d}\mu_{\eta-\mdim\z}=\int\xi\textup{d}\mu_{\eta-\mdim\xi}$.
\end{proof}

\subsubsection{Proof of Thm.~\ref{thm:RT2}}

In the setting of Thm.~\ref{thm:RT2}, Condition \ref{it:nldRi} is automatically satisfied. Thus, Thm.~\ref{thm:RT2} is proved by combining the following three lemmas with Thm.~\ref{thm:RT1}.

\begin{lemma}
  If $\{t\mapsto \ee^{-t\mdim}\lvert \renfcn_x(t)\rvert\mid x\in\Sigma_A\}$ is equi d.\,R.\,i.\ then \ref{it:Lebesgueintegrable} holds.
\end{lemma}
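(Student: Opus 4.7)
The plan is to deduce from the equi d.\,R.\,i.\ hypothesis that each member of the family is individually d.\,R.\,i., after which Condition \ref{it:Lebesgueintegrable} follows from the classical fact that a non-negative d.\,R.\,i.\ function is Lebesgue integrable.

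Concretely, I would fix $x\in\Sigma_A$ and set $g_x(t)\defeq\ee^{-t\mdim}\lvert\renfcn_x(t)\rvert\geq 0$. By the equi d.\,R.\,i.\ assumption, there is $h>0$ for which both
\[
\underline{R}(h)=\sum_{k\in\mathbb Z}h\inf_{y\in\Sigma_A}\underline{m}_k(g_y,h)\qquad\text{and}\qquad \overline{R}(h)=\sum_{k\in\mathbb Z}h\sup_{y\in\Sigma_A}\overline{m}_k(g_y,h)
\]
are finite and have a common limit as $h\to 0$. Since for every $k\in\mathbb Z$ the trivial inequalities
\[
\inf_{y\in\Sigma_A}\underline{m}_k(g_y,h)\leq\underline{m}_k(g_x,h)\leq\overline{m}_k(g_x,h)\leq\sup_{y\in\Sigma_A}\overline{m}_k(g_y,h)
\]
hold, summing them against $h$ gives $\underline{R}(h)\leq\underline{R}(g_x,h)\leq\overline{R}(g_x,h)\leq\overline{R}(h)$. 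Passing to the limit $h\to 0$ sandwiches the upper and lower sums of $g_x$ together at the same common value, so $g_x$ is itself d.\,R.\,i.

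To finish, I would invoke the standard fact that a non-negative d.\,R.\,i.\ function is Lebesgue integrable: indeed, $g_x$ is bounded on every slab $[(k-1)h,kh)$ by $\overline{m}_k(g_x,h)$, hence locally bounded and (because the upper and lower step functions converge to it a.e.) Borel measurable, and
\[
\int_{-\infty}^{\infty}g_x(t)\,\textup{d}t\leq\sum_{k\in\mathbb Z}h\,\overline{m}_k(g_x,h)\leq\overline{R}(h)<\infty.
\]
This is precisely Condition \ref{it:Lebesgueintegrable}. The argument is a direct unravelling of definitions, so no serious obstacle arises; the only point worth flagging is verifying measurability of $g_x$, which is automatic from the sandwich just established.
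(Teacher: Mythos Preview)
Your argument is correct and follows exactly the route the paper takes: the paper's proof is the single line ``This is clear since d.\,R.\,i.\ implies Lebesgue integrability,'' and you have simply unpacked both steps (equi d.\,R.\,i.\ $\Rightarrow$ each $g_x$ is d.\,R.\,i., and d.\,R.\,i.\ $\Rightarrow$ Lebesgue integrable) that the paper leaves implicit.
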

\begin{proof}
This is clear since d.\,R.\,i.\ implies Lebesgue integrability.
\end{proof}

\begin{lemma}\label{lem:dRiC}
	If $\{t\mapsto \ee^{-t\mdim}\lvert \renfcn_x(t)\rvert\mid x\in\Sigma_A\}$ is equi d.\,R.\,i.\ and there exist $\mathfrak{C}',s>0$ such that $\ee^{-t\mdim}\lvert\renfcn_x(t)\rvert\leq\mathfrak{C}'\ee^{st}$, for $t<0$ and $x\in\Sigma_A$ then \ref{it:boundedneg} holds, i.\,e.\ there exist $\tilde{\mathfrak C}>0, t^*\leq 0$ such that $\ee^{-\mdim t}N^{\text{abs}}(t,x)\leq \tilde{\mathfrak C}\ee^{st}$ for $t\leq t^*$.
\end{lemma}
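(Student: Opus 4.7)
The strategy is to absorb the factor $\ee^{-t\mdim}$ into each summand of $N^{\text{abs}}(t,x)$ via the splitting $\ee^{-t\mdim}=\ee^{-(t-S_n\codefun(y))\mdim}\cdot\ee^{-S_n\codefun(y)\mdim}$, which produces
\begin{equation*}
	\ee^{-t\mdim}N^{\text{abs}}(t,x)
	=\sum_{n=0}^{\infty}\sum_{y:\sigma^ny=x}\codefunc(y)\,\ee^{-(t-S_n\codefun(y))\mdim}\lvert \renfcn_y(t-S_n\codefun(y))\rvert\,\ee^{S_n(\eta-\mdim\codefun)(y)}.
\end{equation*}
Whenever $t-S_n\codefun(y)<0$, the exponential decay hypothesis bounds the first weighted factor by $\mathfrak{C}'\ee^{s(t-S_n\codefun(y))}$, and the remaining $\ee^{S_n(\eta-(\mdim+s)\codefun)(y)}$ is precisely the kernel of the $n$th iterate of the Ruelle--Perron--Frobenius operator with potential $\eta-(\mdim+s)\codefun$.

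To produce a threshold $t^*\leq 0$ below which $t-S_n\codefun(y)<0$ holds uniformly in $n\in\mathbb N_0$ and $y\in\Sigma_A$, I would reuse the computation~\eqref{eq:Snpositive} already exploited in the proof of Lemma~\ref{lem:suffices}: since $S_{n_0}\codefun\geq\tilde{\kappa}>0$ for some $n_0\in\mathbb N$ by hypothesis, that calculation yields constants $c>0$ and $C\geq 0$ with $S_n\codefun(y)\geq cn-C$ for all $n\in\mathbb N_0$ and all $y\in\Sigma_A$. Setting $t^*\defeq -C-1$, every $t\leq t^*$ forces $t-S_n\codefun(y)\leq -cn-1<0$, so the decay bound applies uniformly, delivering
\begin{equation*}
	\ee^{-t\mdim}N^{\text{abs}}(t,x)\leq\mathfrak{C}'\,\ee^{st}\sum_{n=0}^{\infty}\PF_{\eta-(\mdim+s)\codefun}^n\codefunc(x).
\end{equation*}

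Finally, to bound this series uniformly in $x$, I invoke Proposition~\ref{thm:eigenvalueone}: the map $s'\mapsto\eigenv_{\eta+s'\codefun}$ is strictly increasing with $\eigenv_{\eta-\mdim\codefun}=1$, so $s>0$ forces $\eigenv_{\eta-(\mdim+s)\codefun}<1$. The spectral radius formula~\eqref{thm:SpectralRadiusFormula} then supplies $\rho\in(\eigenv_{\eta-(\mdim+s)\codefun},1)$ and $C_0>0$ with $\|\PF_{\eta-(\mdim+s)\codefun}^n\|_{\textup{op}}\leq C_0\rho^n$, whence $\sup_{x\in\Sigma_A}\lvert\PF_{\eta-(\mdim+s)\codefun}^n\codefunc(x)\rvert\leq C_0\rho^n\|\codefunc\|_{\alpha}$, and summation over $n$ yields the required constant $\tilde{\mathfrak{C}}$. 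The main obstacle is the sign control of $t-S_n\codefun(y)$ in the preceding paragraph: because $\codefun$ itself need not be positive (only some iterate $S_{n_0}\codefun$ is), one cannot control signs termwise and must extract a uniform linear lower bound on $S_n\codefun$ to treat small and large $n$ simultaneously; once this is secured the remainder is a routine geometric-series estimate mediated by the spectral radius of $\PF_{\eta-(\mdim+s)\codefun}$.
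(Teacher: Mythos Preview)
Your proof is correct and follows essentially the same route as the paper: split $\ee^{-t\mdim}$ to rewrite each summand with potential $\eta-(\mdim+s)\codefun$, choose $t^*$ so that $t-S_n\codefun(y)<0$ uniformly in $n$ and $y$, apply the exponential decay hypothesis, and conclude via Proposition~\ref{thm:eigenvalueone} and the spectral radius formula. The only cosmetic difference is in the choice of $t^*$: you extract a linear lower bound $S_n\codefun\geq cn-C$ from~\eqref{eq:Snpositive}, whereas the paper sets $t^*\defeq\min\{0,\inf_{x\in\Sigma_A,\,0\leq m\leq m^*}S_m\codefun(x)\}$ using that $S_m\codefun>0$ for all $m\geq m^*$---both accomplish the same uniform sign control.
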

\begin{proof}
In \eqref{eq:Snpositive} we have seen that the existence of $n\in\mathbb N$ for which $S_n\codefun$ is strictly positive implies existence of $m^*\in\mathbb N$ such that $S_m\codefun$ is strictly positive for all $m\geq m^*$.
Set 
\begin{equation*}
	t^*\defeq \min\bigg{\{}0,\inf_{x\in\Sigma_A,\,0\leq m\leq m^*}S_m\codefun(x)\bigg{\}}.
\end{equation*}
Then $t-S_m\codefun<0$ for all $t< t^*$ and $m\in\mathbb N$. Additionally using that there exist $\mathfrak{C}',s>0$ such that $\ee^{-t\mdim}\lvert\renfcn_x(t)\rvert\leq\mathfrak{C}'\ee^{st}$ for $t<0$ and $x\in\Sigma_A$ we have for $t\leq t^*$,
\begin{align*}
  \ee^{-\mdim t}N^{\text{abs}}(t,x)
  &= \sum_{n=0}^{\infty}\sum_{y:\sigma^ny=x}\ee^{S_n(\eta-\mdim\codefun)(y)}\lvert\codefunc(y)\rvert\ee^{-\mdim(t-S_n\codefun(y))}\lvert \renfcn_y(t-S_n\codefun(y))\rvert\\
  &\leq \mathfrak C'\ee^{st}\sum_{n=0}^{\infty}\sum_{y:\sigma^ny=x}\ee^{S_n(\eta-(\mdim+s)\codefun)(y)}\lvert\codefunc(y)\rvert\\
  &=\mathfrak C'\ee^{st}\sum_{n=0}^{\infty}\PF_{\eta-(\mdim+s)\codefun}^n\lvert\codefunc\rvert(x)\\
  &\leq\mathfrak C'\ee^{st}\sum_{n=0}^{\infty}\|\PF_{\eta-(\mdim+s)\codefun}^n\|_{\textup{op}}\|\codefunc\|_{\infty}.
\end{align*}
By Prop.~\ref{thm:eigenvalueone} we know that $t\mapsto\eigenv_{\eta+t\codefun}$ is strictly monotonically increasing and $\eigenv_{\eta-\mdim\codefun}=1$. Hence, by the spectral radius formula \eqref{thm:SpectralRadiusFormula} the last series converges and the assertion follows.
\end{proof}

\begin{lemma}
        If $\{t\mapsto \ee^{-t\mdim}\lvert\renfcn_x(t)\rvert\mid x\in\Sigma_A\}$ is equi d.\,R.\,i.\ then \ref{it:boundedC} is satisfied, i.\,e.\ there exists  $\mathfrak C>0$ such that $\ee^{-t\mdim}N^{\text{abs}}(t,x)\leq \mathfrak C$ for all $t\in\mathbb R$.
\end{lemma}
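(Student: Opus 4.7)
The plan is to adapt the binning argument from the proof of Thm.~\ref{thm:RT1}\ref{it:RT1:nl}, case \ref{it:nldRi}. Set $\renfcnn_y(t):=\ee^{-t\mdim}\lvert\renfcn_y(t)\rvert$, so that by hypothesis the family $\{\renfcnn_y:y\in\Sigma_A\}$ is equi d.\,R.\,i.\ Pulling the $\ee^{-t\mdim}$ factor inside the sum as in \eqref{eq:renfcnn} yields
\begin{equation*}
\ee^{-t\mdim}N^{\text{abs}}(t,x)=\sum_{n=0}^{\infty}\sum_{y:\sigma^ny=x}\codefunc(y)\renfcnn_y(t-S_n\codefun(y))\ee^{S_n(\eta-\mdim\codefun)(y)}.
\end{equation*}
Since $\eigenv_{\eta-\mdim\codefun}=1$ by Prop.~\ref{thm:eigenvalueone}, one has $P(\eta-\mdim\codefun)=0$, and the Gibbs property \eqref{eq:Gibbs} of $\mu_{\eta-\mdim\codefun}$ supplies a constant $c>0$ with $\ee^{S_n(\eta-\mdim\codefun)(y)}\le c\,\mu_{\eta-\mdim\codefun}([y|_n])$. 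Substituting gives
\begin{equation*}
\ee^{-t\mdim}N^{\text{abs}}(t,x)\le c\|\codefunc\|_{\infty}\sum_{n=0}^{\infty}\sum_{y:\sigma^ny=x}\mu_{\eta-\mdim\codefun}([y|_n])\renfcnn_y(t-S_n\codefun(y)).
\end{equation*}

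Next, fix $\e>0$ small enough that $4\e<\kappa$ (with $\kappa$ and $m^*$ as in the proof of Thm.~\ref{thm:RT1}\ref{it:RT1:nl}) and that $\overline{R}(4\e)$ is finite, which is possible by equi d.\,R.\,i. Partition $\mathbb R$ into the intervals $\mathcal J_k:=((k-1)4\e,k\cdot 4\e]$ of width $4\e$, and bound $\renfcnn_y(u)\le\overline{M}_k:=\sup_{z\in\Sigma_A}\sup\{\renfcnn_z(v):v\in\mathcal J_k\}$ whenever $u\in\mathcal J_k$. Each $\mathcal J_k$ fits inside exactly one of the width-$4\e$ intervals $I_{k'}(\e)=((k'-1)2\e,(k'+1)2\e]$ appearing in the proof of Thm.~\ref{thm:RT1}\ref{it:RT1:nl}, so the induced bin-count
\begin{equation*}
b_k(t,x):=\sum_{n=0}^{\infty}\sum_{y:\sigma^ny=x}\mu_{\eta-\mdim\codefun}([y|_n])\mathds 1_{\mathcal J_k}(t-S_n\codefun(y))
\end{equation*}
satisfies $b_k(t,x)\le a_{\e,k'}(t,x)$ for the matching $k'$.

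The heart of the argument is the uniform bound $b_k(t,x)\le 2m^*$. This is exactly what was established in the proof of Thm.~\ref{thm:RT1}\ref{it:RT1:nl}: the ancestor/descendant inclusion argument there shows that $\sum_{q=0}^{\lfloor\kappa/(2\e)\rfloor-2}a_{\e,k'+q}(t,x)\le 2m^*$, and since all summands are non-negative this forces $a_{\e,k'}(t,x)\le 2m^*$ for each single $k'$; hence $b_k(t,x)\le 2m^*$ uniformly in $t,x,k$. Combining the pieces and using $\sum_{k\in\mathbb Z}\overline{M}_k=\overline{R}(4\e)/(4\e)$ we obtain
\begin{equation*}
\ee^{-t\mdim}N^{\text{abs}}(t,x)\le c\|\codefunc\|_{\infty}\sum_{k\in\mathbb Z}\overline{M}_k\,b_k(t,x)\le\frac{c\,m^*\,\|\codefunc\|_{\infty}\,\overline{R}(4\e)}{2\e}=:\mathfrak C,
\end{equation*}
which is finite and independent of $t$ and $x$, as required. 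The main conceptual hurdle—the estimate $b_k\le 2m^*$—is supplied verbatim by the earlier proof; everything else is routine Gibbs/binning accounting combined with the equi d.\,R.\,i.\ hypothesis.
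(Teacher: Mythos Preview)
Your proof is correct and takes a genuinely different route from the paper's. The paper argues via the renewal equation: it writes $M(t,x)\defeq\ee^{-t\mdim}N(t,x)/\eigenf_{\eta-\mdim\codefun}(x)$, iterates the $n$-th renewal identity to push $\overline{M}(t)\defeq\sup_{t'\in(t-\kappa,t],\,x}M(t',x)$ down to values $t'\leq t^*$, invokes Lem.~\ref{lem:dRiC} to bound the tail by a constant $\mathfrak C''$, and then controls the accumulated ``forcing'' terms $\sum_{i,m}\sup_{t'}\|\PF^i_{\eta-\mdim\codefun}(\codefunc\renfcnn_{i,\cdot}(t'))/\eigenf_{\eta-\mdim\codefun}\|_{\infty}$ by showing each summand is d.\,R.\,i. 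You instead bypass the renewal iteration entirely: after passing to Gibbs weights you recycle the antichain/bin-count estimate $a_{\e,k'}(t,x)\leq 2m^*$ already established in Lem.~\ref{lem:suffices} and sum the equi d.\,R.\,i.\ envelope over bins. Your approach is shorter and has the pleasant side effect that it proves the lemma exactly as stated, using only the equi d.\,R.\,i.\ hypothesis; the paper's proof silently imports the exponential-decay hypothesis of Lem.~\ref{lem:dRiC}, which is available in the ambient context of Thm.~\ref{thm:RT2} but is not part of the lemma's own assumptions. The paper's approach, on the other hand, is more self-contained in that it does not need to unpack the combinatorics of Lem.~\ref{lem:suffices}. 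One cosmetic remark: your sentence ``each $\mathcal J_k$ fits inside exactly one $I_{k'}(\e)$'' undersells the situation---in fact $\mathcal J_k=I_{2k-1}(\e)$, so $b_k=a_{\e,2k-1}$ exactly.
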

\begin{proof}
  By assumption there exists $n\in\mathbb N$ for which $S_n\codefun$ is strictly positive. Fix this $n$, choose $\kappa>0$ such that $S_n\codefun\geq\kappa$ and consider the $n$-th iterate of the renewal equation, namely,
  \begin{align*}
    N(t,x)
    =\hspace{-0.3cm} \sum_{y:\sigma^n y=x}\hspace{-0.2cm}N(t-S_n\codefun(y),y)\ee^{S_n\eta(y)}
      + \sum_{i=0}^{n-1}\sum_{y:\sigma^iy=x}\hspace{-0.2cm}\codefunc(y)\renfcn_y(t-S_i\codefun(y))\ee^{S_i\eta(y)}.
  \end{align*}
  The function defined by $M(t,x)\defeq\ee^{-t\mdim}N(t,x)/\eigenf_{\eta-\mdim\codefun}(x)$ satisfies
  \begin{align}
    M(t,x)
    &= \sum_{y:\sigma^n y=x}M(t-S_n\codefun(y),y)\ee^{S_n(\eta-\mdim\codefun)(y)}\frac{\eigenf_{\eta-\mdim\codefun}(y)}{\eigenf_{\eta-\mdim\codefun}(x)}\label{eq:renM} \\
    &\quad+ \sum_{i=0}^{n-1}\sum_{y:\sigma^iy=x}\codefunc(y)\ee^{-\mdim(t-S_i\codefun(y))}\renfcn_y(t-S_i\codefun(y))\ee^{S_i(\eta-\mdim\codefun)(y)}\frac{1}{\eigenf_{\eta-\mdim\codefun}(x)}.\nonumber
  \end{align}
Set $\renfcnn_x(t)\defeq\ee^{-t\mdim}\renfcn_x(t)$ and $\renfcnn_{i,x}(t)\defeq \renfcnn_x(t-S_i\codefun(x))$ for $x\in\Sigma_A$. Then \eqref{eq:renM} becomes
\begin{align}
\begin{aligned}\label{eq:renMPF}
    M(t,x)
    &= \sum_{y:\sigma^n y=x}M(t-S_n\codefun(y),y)\ee^{S_n(\eta-\mdim\codefun)(y)}\frac{\eigenf_{\eta-\mdim\codefun}(y)}{\eigenf_{\eta-\mdim\codefun}(x)}\\
    &\qquad+ \sum_{i=0}^{n-1}\PF_{\eta-\mdim\codefun}^i(\codefunc\renfcnn_{i,\cdot}(t))(x)/\eigenf_{\eta-\mdim\codefun}(x)
\end{aligned}
\end{align}
  with $\renfcnn_{i,\cdot}(t)\colon\Sigma_A\to\mathbb R$, $x\mapsto \renfcnn_{i,x}(t)$. Define
  \begin{equation}
    \overline{M}(t)\defeq\sup_{\substack{t'\in(t-\kappa,t]\\x\in\Sigma_A}}M(t',x).
  \end{equation}
  Then \eqref{eq:renMPF} implies that
  \begin{align}
    \overline{M}(t)
    \leq \sup_{y\in\Sigma_A}\overline{M}(t-S_n\codefun(y))
    +\sup_{t'\in(t-\kappa,t]}\sum_{i=0}^{n-1}\|\PF^i_{\eta-\mdim\codefun}(\codefunc\renfcnn_{i,\cdot}(t'))/\eigenf_{\eta-\mdim\codefun}\|_{\infty},\label{eq:Mbar}
  \end{align}
as $\sum_{y:\sigma^n y=x}\ee^{S_n(\eta-\mdim\codefun)(y)}\eigenf_{\eta-\mdim\codefun}(y)/\eigenf_{\eta-\mdim\codefun}(x)=1$.
Iterating \eqref{eq:Mbar} $k$ times and using the abbreviation 
$I_{\kappa}(t,x^1,\ldots,x^m)\defeq (t-\sum_{j=1}^m S_n\codefun(x^j)-\kappa,t-\sum_{j=1}^m S_n\codefun(x^j)]$ 
yields that
\begin{align*}
  \overline{M}(t)
  &\leq \sup_{x^1,\ldots,x^k\in\Sigma_A}\bigg{[}\overline{M}\Big{(}t-\sum_{j=1}^k S_n\codefun(x^j)\Big{)}\\
  &\quad+\sum_{i=0}^{n-1}\sum_{m=0}^{k-1}\sup_{t'\in I_{\kappa}(t,x^1,\ldots,x^m)}\|\PF^i_{\eta-\mdim\codefun}(\codefunc\renfcnn_{i,\cdot}(t')/\eigenf_{\eta-\mdim\codefun}\|_{\infty}\bigg{]},
\end{align*}
where $\sum_{j=1}^0 S_n\codefun(x^j)$ shall be understood to be zero.

Fix $t\in\mathbb R$. As $S_n\codefun\geq\kappa>0$ we can find a $k^*\in\mathbb N$ such that for all $k\geq k^*$ and $x^1,\ldots, x^k\in\Sigma_A$ we have $t-\sum_{j=1}^k S_n\codefun(x^j)\leq t^*\leq 0$ with $t^*$ as in Lem.~\ref{lem:dRiC}. Hence by Lem.~\ref{lem:dRiC} there exists a constant $\mathfrak C''$ such that 
\begin{align}\label{eq:MbarC''}
  \overline{M}(t)
  &\leq \mathfrak C''
  +\sup_{x^1,x^2,\ldots\in\Sigma_A}\sum_{i=0}^{n-1}\sum_{m=0}^{\infty}\sup_{t'\in I_{\kappa}(t,x^1,\ldots,x^m)}\|\PF^i_{\eta-\mdim\codefun}(\codefunc\renfcnn_{i,\cdot}(t'))/\eigenf_{\eta-\mdim\codefun}\|_{\infty}.
\end{align}

If $t'\mapsto\|\PF_{\eta-\mdim\codefun}^i(\codefunc\renfcnn_{i,\cdot}(t'))/\eigenf_{\eta-\mdim\codefun}\|_{\infty}$ is d.\,R.\,i.\  then $t-\sum_{j=1}^{m+1}S_n\codefun(x^j)\leq t-\sum_{j=1}^{m}S_n\codefun(x^j)-\kappa$ for $x^1,\ldots,x^{m+1}\in\Sigma_A$ implies that the series in \eqref{eq:MbarC''} converges for any constellation  $x^1, x^2,\ldots\in\Sigma_A$ and thus is uniformly bounded for $x^1, x^2,\ldots\in\Sigma_A$. This proves the assertion. Hence all that remains to be shown is that $t'\mapsto\|\PF_{\eta-\mdim\codefun}^i(\codefunc\renfcnn_{i,\cdot}(t'))/\eigenf_{\eta-\mdim\codefun}\|_{\infty}$ is d.\,R.\,i. For this, note that using the terminology of Defn.~\ref{def:dRi}
\begin{align*}
        \underline{m}_k\left(\PF_{\eta-\mdim\codefun}^i(\codefunc\renfcnn_{i,\cdot})(x),h\right)
        &\geq\sum_{y:\sigma^iy=x}\codefunc(y)\ee^{S_i(\eta-\mdim\codefun)(y)}\underline{m}_{k-S_i\codefun(y)/h}(\renfcnn_y,h)\quad\text{and}\\
        \overline{m}_k\left(\PF_{\eta-\mdim\codefun}^i(\codefunc\renfcnn_{i,\cdot})(x),h\right)
        &\leq\sum_{y:\sigma^iy=x}\codefunc(y)\ee^{S_i(\eta-\mdim\codefun)(y)}\overline{m}_{k-S_i\codefun(y)/h}(\renfcnn_y,h).
\end{align*}
  Since $\sum_{y:\sigma^iy=x}\codefunc(y)\ee^{S_i(\eta-\mdim\codefun)(y)}=\PF^i_{\eta-\mdim\codefun}\codefunc(x)$ is finite, the hypothesis of $\{\renfcnn_x\mid x\in\Sigma_A\}$ being equi d.\,R.\,i.\ implies that $t'\mapsto\|\PF_{\eta-\mdim\codefun}^i(\codefunc\renfcnn_{i,\cdot}(t'))/\eigenf_{\eta-\mdim\codefun}\|_{\infty}$ is d.\,R.\,i.\ which finishes the proof.
\end{proof}

\bibliographystyle{alpha}
\bibliography{Literaturvz}

\end{document}